\newfont{\Bb}{msbm10 scaled\magstep0}
\newfont{\Bbl}{msbm10 scaled\magstep1}
\newfont{\Bbs}{msbm10 scaled 800}
\newcommand{\fq}{\mbox{\Bb{F}}_q}
\newcommand{\f}{\mbox{\Bb{F}}}
\newcommand{\fp}{\mbox{\Bb{F}}_p}
\newcommand{\sfq}{\mbox{\Bbs{F}}_q}
\newcommand{\sff}{\mbox{\Bbs{F}}}
\newcommand{\fqk}{\mbox{\Bb{F}}_{q^k}}
\newcommand{\Z}{\mbox{\Bb{Z}}}
\newcommand{\Q}{\mbox{\Bb{Q}}}
\newcommand{\Qp}{\mbox{\Bb{Q}}_{p}}
\newcommand{\sZ}{\mbox{\Bbs{Z}}}
\newcommand{\C}{\mbox{\Bb{C}}}
\newcommand{\sC}{\mbox{\Bbs{C}}}
\newcommand{\LL}{L} 
\newcommand{\OL}{{\mathcal O}_{\LL}}
\newcommand{\AOL}{A_{\OL}}
\newcommand{\Robba}{{\mathcal R}}
\newcommand{\Hyper}{\mbox{\Bb{H}}}
\newcommand{\ord}{{\rm ord}}
\newcommand{\Spec}{{\rm Spec}}
\newcommand{\Ker}{{\rm Ker}}
\newcommand{\Coker}{{\rm Coker}}
\newcommand{\Ima}{{\rm Im}}
\newcommand{\lcm}{{\rm lcm}}
\newcommand{\Gal}{{\rm Gal}}
\newcommand{\GL}{{\rm GL}}
\newcommand{\HHom}{{\rm Hom}}
\newcommand{\Basis}{{\mathcal B}}
\newcommand{\W}{W}
\newcommand{\K}{K}
\newcommand{\F}{F} 
\newcommand{\Kalg}{L}
\newcommand{\Xp}{X}
\newcommand{\Sp}{S}
\newcommand{\Yp}{Y}
\newcommand{\Tp}{T}
\newcommand{\Pp}{P}
\newcommand{\XK}{\mathcal{X}_{K}}
\newcommand{\SK}{\mathcal{S}_{K}}
\newcommand{\XW}{{\mathcal X}}
\newcommand{\SW}{{\mathcal S}}
\newcommand{\XL}{\mathcal{X}_{\LL}}
\newcommand{\SL}{\mathcal{S}_{\LL}}
\newcommand{\XOL}{\mathcal{X}_{\OL}}
\newcommand{\SOL}{\mathcal{S}_{\OL}}
\newcommand{\XC}{X}
\newcommand{\SC}{S}
\newcommand{\YC}{Y}
\newcommand{\XF}{X}
\newcommand{\SF}{S}
\newcommand{\YF}{Y} 
\newcommand{\UF}{U} 
\newcommand{\PF}{P}
\newcommand{\UnivR}{R} 
\newcommand{\XR}{X}
\newcommand{\SR}{T}
\newcommand{\UR}{U}
\newcommand{\PR}{P}
\newcommand{\Fr}{{\mathcal F}} 
\newcommand{\N}{{\mathcal N}} 
\newcommand{\HH}{{\mathcal H}} 
\newcommand{\AW}{A} 
\newcommand{\AK}{A_{\K}}
\newcommand{\Adag}{A^\dagger}
\newcommand{\AdagK}{A^\dagger_{\K}}
\newcommand{\AL}{A_{\LL}}
\newcommand{\AF}{A}
\newcommand{\pr}{\mbox{\Bb{P}}}
\newcommand{\spr}{\mbox{\Bbs{P}}}
\newcommand{\Aff}{\mbox{\Bb{A}}}
\newcommand{\E}{\mathcal E}
\newtheorem{theorem}{Theorem}[section]
\newtheorem{lemma}[theorem]{Lemma}
\newtheorem{conjecture}[theorem]{Conjecture}
\newtheorem{corollary}[theorem]{Corollary}
\newtheorem{definition}[theorem]{Definition}
\newenvironment{exafont}{\begin{sc}}{\end{sc}}
\newenvironment{example}{\vspace{0.3cm}\par\noindent\refstepcounter{theorem}\begin{exafont}Example
 \thetheorem\end{exafont}\hspace{\labelsep}}{\vspace{0.3cm}\par}
\newenvironment{note}{\vspace{0.3cm}\par\noindent\refstepcounter{theorem}\begin{exafont}Note
 \thetheorem\end{exafont}\hspace{\labelsep}}{\vspace{0.3cm}\par}
\title{Degenerations and limit Frobenius \\ structures in rigid cohomology}
\author{Alan G.B. Lauder
\\ \\
{\small \it In loving memory of my father}
\\ {\small \it George Alan Lauder 1937-2008}}
\begin{document}

\maketitle

\begin{abstract}
We introduce a ``limiting Frobenius structure'' attached to any degeneration of projective varieties over a finite field
of characteristic $p$ which satisfies a $p$-adic lifting assumption. Our limiting Frobenius structure is shown to
be effectively computable in an appropriate sense for a degeneration of projective hypersurfaces. We conjecture
that the limiting Frobenius structure relates to the rigid cohomology of a semistable limit of the degeneration through
an analogue of the Clemens-Schmidt exact sequence. Our construction is illustrated, and conjecture supported, by
a selection of explicit examples.
\end{abstract}

\section{Introduction}\label{Sec-Intro}

This paper grew out of the author's attempt to generalise an algorithm for computing zeta functions of
smooth projective hypersurfaces over finite fields to the case of singular hypersurfaces. This existing algorithm 
is called the deformation method, as the geometric idea behind the method is to deform a smooth diagonal hypersurface
to the hypersurface in which one is interested. In this paper we degenerate smooth projective hypersurfaces to
singular projective hypersurfaces, possibly not even reduced or irreducible, and try to compute some 
$p$-adic cohomological data attached to the degeneration. The problem we address is, how does one
compute this cohomological data and what is its geometric meaning? Let us begin by sketching
the original deformation method for smooth projective hypersurfaces \cite{Gerk,Lfocm}.

Let $\fq$ be the finite field with $q$ elements of characteristic $p$, and
\[ \Pp_0 \in \fq[x_0,x_1,\cdots,x_{n+1}] \]
be a homogeneous polynomial of degree $d$ defining a smooth 
hypersurface $\Xp_0$ in $\pr^{n+1}_{\sfq}$. Recall that the zeta function of $\Xp_0$ is the formal
power series
\[ Z(\Xp_0,T) := \exp\left(\sum_{k = 1}^{\infty} |\Xp_0(\fqk)| \frac{T^k}{k} \right)\]
which encodes the number of points on $\Xp_0$ over the different finite extensions $\fqk$ of the
base field $\fq$.
By a famous theorem of Dwork this series is the local expansion at the origin of a rational function
with integer coefficients \cite{Dwork}.
Define
\[ \Pp_1 := x_0^d + x_1^d + \cdots + x_{n+1}^d \]
and assume $p$ does not divide $d$, so that the projective hypersurface $\Xp_1$ defined by
$\Pp_1$ is smooth. Let $\Xp \subset \pr^{n+1}_{\sfq} \times \Aff^1_{\sfq}$ be the subvariety of smooth
hypersurfaces in the pencil defined by the equation
\[ \Pp_t := (1-t) \Pp_0 + t \Pp_1 = 0. \]
Thus there is an open curve $\Sp \subseteq \Aff^1_{\sfq}$ and a smooth morphism $\Xp \rightarrow \Sp$ with
generic fibre the smooth hypersurface defined by $P_t$. One can construct to any $p$-adic
precision (in a well-defined sense) the relative rigid cohomology $\HH := H^n_{rig}(\Xp/\Sp)$ of the smooth projective family by starting with
the cohomology $H^n_{rig}(\Xp_1)$ of the smooth diagonal fibre and solving a $p$-adic differential equation.
Since $\Xp_0$ is also smooth a base change theorem allows one to specialise $\HH$ to an object
$\HH_0$ such that $\HH_0 = H^n_{rig}(\Xp_0)$. 
Let $\K$ be the unique unramified extension of the field of $p$-adic numbers $\Qp$ with residue
field $\fq$, and $\sigma: \K \rightarrow \K$ the Frobenius automorphism. 
We shall not say anything in detail here about rigid cohomology, except
that $H^n_{rig}(\Xp_0)$ is a vector space of finite dimension over $\K$
which is acted upon by a $\sigma$-linear map
$\Fr_0$ called Frobenius.
The trace formula in rigid cohomology
\begin{equation}\label{Eqn-TraceFormula}
 Z(\Xp_0,T) = 
\frac{\det(1 - T \Fr_0^{\log_p(q)}|H^n_{rig}(\Xp_0))^{(-1)^{n+1}}}{(1-T)(1-qT)(1- q^2T) \cdots (1-q^n T)}
\end{equation}
now allows us to recover the zeta function exactly, assuming we have carried enough $p$-adic precision in our computation.

When $\Xp_0$ is singular the above algorithm fails, both in theory and practice. One cannot specialise $\HH = H^m_{rig}(\Xp/\Sp)$
at the point $t = 0$ given the manner in which it is constructed in our original method, and even if one could there
is no base change theorem to tell you what this gives. 

In this paper we consider the case in which $\Xp_0$ is (almost) any projective hypersurface, perhaps singular
or reducible, or even not reduced as a scheme. In fact we work in slightly greater generality than 
above, but subject to one lifting condition.  We consider a smooth family
 $\Xp \rightarrow \Sp \subseteq \Aff^1_{\sfq}$
of projective hypersurfaces of degree $d$ and dimension $n$ which lifts to a smooth family 
$\XOL \rightarrow \SOL$ of projective hypersurfaces of degree $d$ over the ring of integers $\OL$ of
an algebraic number field $\LL \subset \K$ such that
all fibres in the punctured $p$-adic open unit disk
around the origin are ``smooth and in general position'' (Definition \ref{Def-GenPos}). It is convenient
to further assume that $1 \in \Sp$. We say then that $\Xp \rightarrow \Sp$ is in general position and has a good algebraic
lift around $t = 0$ (Definition \ref{Def-GALhyper}).
Defining as before $\HH := H^n_{rig}(\Xp/\Sp)$, we now introduce a new object 
 $(\HH_0,\N_0,\Fr_0,e)$ which we call the {\it limiting (mixed) Frobenius structure} at $t = 0$.
It is a finite
dimensional vector space $\HH_0$ over the $p$-adic field $\K$ with an invertible $\sigma$-linear operator $\Fr_0: \HH_0 \rightarrow \HH_0$ and nilpotent
operator $\N_0: \HH_0 \rightarrow \HH_0$ with $\N_0^{n+1} = 0$ such that
 \[ \N_0 \Fr_0 = p \Fr_0 \N_0;\]
 and in addition a positive integer $e$. We
call $\Fr_0$ the Frobenius operator and $\N_0$ the monodromy operator. The integer $e$ specifies a
cover $\Sp_e \rightarrow \Sp\,(t \mapsto t^e)$ of the base curve which occurs during the construction.
The monodromy operator $\N_0$ is defined over the algebraic number field $\LL \subset \K$ and so we may represent
it explicitly via a nilpotent matrix of algebraic numbers with respect to some basis of $\HH_0$. 
With respect to the same basis, the matrix for the Frobenius
$\Fr_0$ has entries in the uncountable field $\K$ and one cannot write it down; 
however, for each positive integer $N$ one may approximate it by a matrix $\tilde{\Fr}_0$ with entries in
the algebraic number field $\Kalg$ such that $\ord_p (\Fr_0 - \tilde{\Fr}_0) \geq N$. We call such a $\tilde{\Fr}_0$ a $p^N$-approximation
to the matrix for $\Fr_0$. Our main theoretical result is as follows (Theorem \ref{Thm-LFShypersurfaces}).

\begin{theorem}
The limiting Frobenius structure $(\HH_0,\N_0,\Fr_0,e)$ at $t = 0$
for a degeneration of smooth projective hypersurfaces $\Xp \rightarrow \Sp$ which has a good algebraic lift
around $t = 0$ and is in general position is effectively computable, in the following sense.
Given such a degeneration and any positive integer $N$, one may compute $e$, a basis for $\HH_0$, and with respect to this basis the matrix for $\N_0$ and a $p^N$-approximation to a matrix for $\Fr_0$.
\end{theorem}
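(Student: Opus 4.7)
The overall strategy is to build the limiting structure from data that can be computed algebraically over $\LL$ together with $p$-adic approximations propagated via the Gauss--Manin connection. The key point is that having a good algebraic lift and working in general position means all ``structural'' data (the connection, its local exponents at $t=0$, the monodromy) is defined over the number field $\LL$, while only the Frobenius matrix genuinely lives in $\K$.

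\emph{Step 1: Algebraic preparation.} Using the lifted family $\XOL\to\SOL$, I would compute a basis of the relative algebraic de Rham cohomology $H^n_{dR}(\XL/\SL)$ and the matrix of the Gauss--Manin connection $\nabla$ explicitly, using the Griffiths--Dwork reduction tailored to projective hypersurfaces. Since the fibre at $t=0$ may be singular, this connection has a singularity at $t=0$; a Frobenius-style cyclic-vector / Fuchsian reduction (or the direct computation of the local Jordan form of the residue) yields the local exponents of $\nabla$ at $t=0$. Let $e$ be the smallest positive integer clearing the denominators of these exponents, so that after pulling back along $\Sp_e\to\Sp$, $t\mapsto t^e$, the connection at $0$ has integer exponents and unipotent local monodromy $T$. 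Then set $\N_0 := \log T$, a nilpotent matrix with entries in $\LL$ satisfying $\N_0^{n+1}=0$ (the nilpotency index coming from the fact that rigid cohomology of a hypersurface degeneration is bounded in length by $n+1$, as in the classical Clemens--Schmidt picture). This yields $e$, the basis of $\HH_0$, and the exact matrix of $\N_0$.

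\emph{Step 2: Computing Frobenius to precision $p^N$.} At the smooth fibre $t=1$ one can directly compute a matrix for the absolute Frobenius $\Fr_1$ on $H^n_{rig}(\Xp_1)$ to any desired precision using a deformation or direct cohomological algorithm (the same machinery underlying the original smooth-hypersurface method). The Frobenius on $\HH$ is then determined everywhere by the compatibility $\nabla\circ\Fr=p\,\Fr\circ\sigma^*\nabla$. Concretely, I would solve the $p$-adic differential equation $\nabla F = 0$ over a wide open annulus around $0$ using Dwork's trick to analytically continue horizontal sections from a neighbourhood of $t=1$ to the boundary of the punctured disk around $0$, truncating series expansions at precision sufficient to guarantee a final answer correct modulo $p^N$. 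Pulling back along $t\mapsto t^e$ gives a Frobenius matrix $\Fr(s)$, as a matrix of bounded Laurent series in the new parameter $s$, with $s=0$ corresponding to the degenerate fibre.

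\emph{Step 3: Taking the limit.} To extract $\Fr_0$ from $\Fr(s)$ one cannot simply set $s=0$, since a generic matrix entry behaves like $s^\alpha \log(s)^k$ for the local exponents and hence diverges. Instead, using the chosen basis in which $\N_0$ is an explicit nilpotent matrix, I would change frame by the horizontal twist $s^{\N_0/(2\pi\text{-analogue})}$ — more precisely, by the unipotent factor determined by $\N_0$ and the $p$-adic logarithm of $s$ — turning $\Fr(s)$ into a matrix whose entries are bounded analytic functions on the disk, and then evaluate at $s=0$. The resulting matrix, correct modulo $p^N$ by construction, is the desired $p^N$-approximation $\tilde\Fr_0$, and its $\sigma$-linear extension satisfies $\N_0\Fr_0=p\Fr_0\N_0$ automatically from the connection-Frobenius compatibility.

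\emph{Main obstacle.} The serious issue is Step 3: one must show that, after the monodromy twist, $\Fr(s)$ really is bounded (and thus has a genuine limit) on the $p$-adic disk, and one must track precision through the twist, which involves $p$-adic logarithms and can be lossy. This requires an \emph{a priori} estimate, for the family in general position, on the Newton polygon (or slopes) of the Frobenius entries of $\Fr(s)$ as $s\to 0$, coupled with the fact that the local exponents of $\nabla$ lie in a controlled denominator set. Packaging these estimates into an effective bound on how much $p$-adic precision is needed in Steps 1--2 to guarantee a $p^N$-correct output is the technical heart of the argument; once this is in place, the computability of each individual ingredient (the connection, $\Fr_1$, solutions of a $p$-adic ODE, and matrix logarithms) is standard.
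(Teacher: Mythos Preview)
Your Steps~1 and~2 are broadly in line with the paper: compute the Gauss--Manin connection via Griffiths--Dwork, determine $e$ from the local exponents, pull back, and propagate Frobenius from the smooth fibre $t=1$ using the $p$-adic differential equation. One minor discrepancy: in the paper $\N_0$ is simply the \emph{residue matrix} of the connection in the prepared basis, not $\log T$; these agree up to normalisation once the connection has a simple pole with nilpotent residue, but it is the residue that is actually computed.

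The genuine gap is Step~3. Your proposed ``horizontal twist by $s^{\N_0/(2\pi\text{-analogue})}$'' is borrowed from the complex-analytic picture (Schmid's nilpotent orbit theorem), and there is no straightforward $p$-adic substitute: the $p$-adic logarithm of $s$ does not converge near $s=0$, so the expression $\exp(\N_0\log_p s)$ is not available on the punctured disk, and your own remark that this twist ``can be lossy'' understates the problem. The paper sidesteps this entirely. The key input you are missing is what the paper calls the ``surprising lemma'' (a consequence of the Christol--Dwork transfer theorem): if one first changes basis by a matrix in $\GL_r(\LL[s,s^{-1}])$ so that the connection has a \emph{simple pole at $s=0$ with nilpotent residue}, and if one uses the Frobenius lift $s\mapsto s^p$, then the Frobenius matrix $F(s)$, a priori only an element of the Robba ring, in fact has \emph{no negative powers of $s$} and converges on the whole open unit disk. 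Thus one simply evaluates $F(s)$ at $s=0$ --- no twist, no $p$-adic logarithm, no delicate boundedness estimate. The precision analysis then reduces to tracking loss through the algebraic change of basis $H(s)$ (a fixed finite amount $2\delta$ determined by $\ord_p H$ and $\ord_p H^{-1}$), which is elementary. So your ``main obstacle'' is not merely technical: the route you propose does not work $p$-adically, and the correct mechanism is a structural theorem about $F$-isocrystals with nilpotent residue rather than an analytic limiting procedure.
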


Our algorithm for computing the limiting Frobenius structure is deterministic and explicit. It is practical in two senses. 
First, one may show that it is
polynomial-time in $p,\log_p(q),(d+1)^n$ and $N$ under favourable assumptions (Section \ref{Sec-Complexity}).
Second, the author has implemented the algorithm in the programming language
{\sc Magma}, and executed it for degenerations of curves, surfaces and threefolds of various different degrees
(Section \ref{Sec-Exs}).

Given that one can define and compute the limiting Frobenius structure, it is natural to next ask for its geometric significance.
When $0 \in \Sp$ then the fibre $\Xp_0$ in $\Xp \rightarrow \Sp$ is smooth. In this case
$e = 1$, $\N_0 = 0$ and $\Fr_0$ is the usual Frobenius map on $\HH_0 = H^n_{rig}(\Xp_0)$.
Let us suppose then that $0 \not \in \Sp$ and so $\Xp_0$ is not defined. One may
canonically extend $\Xp \rightarrow \Sp$ to a flat 
family $\Yp \rightarrow \Tp$ of projective hypersurfaces over $\Tp:= \Sp \cup \{0\}$.
Let us denote the fibre over the origin in $\Yp \rightarrow \Tp$ as $\Xp_0$ and call it the degenerate fibre.
One sees from some examples (Examples \ref{Ex-DoubleConic}, \ref{Ex-3cusps}, \ref{Ex-Quintic} and \ref{Ex-QuarticCone}) 
that there may be
no relationship between the limiting Frobenius structure and the zeta function of $\Xp_0$. 
Before proceeding further we need some more definitions.

Let us denote by $\Sp_e \rightarrow \Sp$ and $\Tp_e \rightarrow \Tp$ the covers defined by the map $t \mapsto t^e$, and
write $\Xp_e := \Xp \times_{\Sp} \Sp_e$.
(We suspect that it is a consequence of our lifting hypothesis that the integer $e$ is always coprime to $p$ (Note \ref{Note-pande}), and so the cover $\Sp_e \rightarrow \Sp$ is unramified.)
Note that $\Xp_e \rightarrow \Sp_e$ is still a smooth family of projective hypersurfaces.
We shall say that $\Xp_e \rightarrow \Sp_e$ extends to a semistable degeneration over the origin if the following
is true. There exists a smooth $\fq$-variety $\Yp_e$ and flat morphism $\Yp_e \rightarrow \Tp_e$ which restricts
over $\Sp_e \subset \Tp_e$ to $\Xp_e \rightarrow \Sp_e$ with the fibre $\Yp_{e,0}$ over $0 \in \Tp$ a reduced
union of smooth hypersurfaces on $\Yp_e$ crossing transversally. When $\Xp_e \rightarrow \Sp_e$ extends to a semistable
degeneration $\Yp_e \rightarrow \Tp_e$ over the origin, we shall sometimes denote the degenerate fibre $\Yp_{e,0}$ by
$\Xp_{e,0}$, bearing in mind always that it depends upon the choice of extension.
We make the following conjecture, 
motivated in part by an analogy with degenerations of complex algebraic varieties (Theorem \ref{Thm-CS}).
(See Conjecture \ref{Conj-CS} for a fuller statement of our conjecture.)

\begin{conjecture}\label{Conj-CSIntro}
The morphism $\Xp_e \rightarrow \Sp_e$ extends to
a semistable degeneration over the origin with degenerate fibre $\Xp_{e,0}$. Moreover, for any such
extension there exists a ``Clemens-Schmidt'' exact sequence in rigid cohomology containing the following
\[ \cdots \rightarrow H^n_{rig}(\Xp_{e,0}) \rightarrow \HH_0 \stackrel{\N_0}{\rightarrow} \HH_0(-1) \rightarrow
\cdots .\]
\end{conjecture}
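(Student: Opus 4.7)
The plan is to proceed in three stages, closely paralleling the complex-analytic Clemens-Schmidt theorem: first produce a semistable extension of $\Xp_e \to \Sp_e$; then attach to it a long exact sequence via a $p$-adic weight spectral sequence; and finally identify the terms built from $\HH_0$ in that sequence with the limiting Frobenius structure. For the first stage, the good algebraic lift $\XOL \to \SOL$ spreads out to a proper flat family over the punctured open disk in characteristic zero, where semistable reduction (Kempf-Knudsen-Mumford-Saint-Donat) yields a semistable model after a finite base change $t \mapsto t^{e'}$. The integer $e'$ is the smallest rendering local monodromy of the Gauss-Manin connection unipotent; matching this against the $p$-adic monodromy theorem applied to the same connection should give $e' = e$, the integer appearing in our construction. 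Reducing modulo $p$ and preserving semistability is delicate, but the general position and lifting hypotheses should suffice, possibly after further blowup in the special fibre.

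Given a semistable $\Yp_e \to \Tp_e$, the second stage equips the special fibre $\Yp_{e,0} = \Xp_{e,0}$ with its canonical log structure and forms log-rigid, equivalently Hyodo-Kato, cohomology. The weight spectral sequence of Mokrane, transposed to the rigid setting by Chiarellotto, Nakkajima and others, degenerates at $E_2$ and produces a nilpotent monodromy operator $N$ on Hyodo-Kato cohomology together with a long exact sequence of precisely the claimed Clemens-Schmidt shape, interleaving $H^n_{rig}(\Xp_{e,0})$ with the Hyodo-Kato cohomology and its $N$-twist. Modulo the cited machinery this stage is essentially formal, and one expects all maps to be compatible with Frobenius.

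The principal obstacle is the third stage: canonically identifying $(\HH_0, \N_0, \Fr_0)$ with the Hyodo-Kato cohomology of $\Xp_{e,0}$ together with its natural Frobenius and monodromy. Our $\HH_0$ is built by solving the Gauss-Manin connection on $\HH$ near $t = 0$, reading off a local Frobenius from its action on horizontal sections, and descending through the cover $t \mapsto t^{1/e}$. What is required is a rigid-analytic analogue of Steenbrink's construction of the limit mixed Hodge structure from the nearby-cycle complex, identifying the algorithmic output with the canonical extension of the Gauss-Manin module whose fibre at $t = 0$ recovers log-crystalline cohomology with its Hyodo-Kato Frobenius. This comparison is substantially harder than in characteristic zero because we assume only that the hypersurface itself admits an algebraic lift over $\OL$, not that any semistable model does; bridging this gap, perhaps by upgrading the lift to a semistable one after enlarging $\LL$, or by a direct local analysis of Frobenius on the Gauss-Manin connection attached to the original lift, appears to be the core difficulty and the reason for stating the result as a conjecture rather than a theorem.
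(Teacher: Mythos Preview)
The statement you are addressing is a \emph{conjecture} in the paper, not a theorem: the paper offers no proof and does not claim one. What the paper does instead is (i) motivate the conjecture by analogy with the complex Clemens--Schmidt sequence and the $\ell$-adic local invariant cycle theorem (Section~4), and (ii) support it by explicit machine computation of limiting Frobenius structures for a range of degenerations of curves and surfaces, checking in each case that $\det(1 - T\Fr_0 \mid \Ker \N_0)$ matches the predicted zeta-function factor of a semistable limit (Section~5). There is no attempt in the paper to construct the exact sequence or to compare $(\HH_0,\N_0,\Fr_0)$ with Hyodo--Kato cohomology.

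Your three-stage outline is therefore not a comparison target but rather a sketch of a research programme toward the conjecture, and you yourself correctly flag the third stage as the essential obstruction. A few points of contact with what the paper actually says: the paper explicitly raises the issue of whether $p \nmid e$ (Note~3.6), which bears on your first stage since the cover $t \mapsto t^e$ must reduce to an unramified cover of $\Sp$ for the comparison with Kedlaya's semistable reduction theorem to go through cleanly; the paper also notes (Section~3.4) that in favourable cases one expects an integral and Hodge-filtered refinement, which would be the natural bridge to the log-crystalline picture you invoke. But the paper stops well short of the comparison you describe, and your identification of the gap between ``the hypersurface lifts'' and ``some semistable model lifts'' is precisely the kind of obstacle the author leaves open. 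In short: your proposal is a plausible attack on an open problem, not a proof, and the paper contains no proof for you to be compared against.
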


The exactness of the above three term sequence in the middle (``local invariant cycle theorem'')
implies that $\det(1 - T\Fr_0^{\log_p(q)}|\Ker(\N_0))$ occurs as a factor in the ``middle Weil polynomial'' of the zeta function of 
the semistable variety $\Xp_{e,0}$. The main experimental work in this paper is the testing of
our conjecture for some degenerations of low dimensional hypersurfaces (Section 
\ref{Sec-Exs}).

Having shown that the limiting Frobenius structure for a degeneration of hypersurfaces under a
lifting assumption is effectively computable, and provided
some experimental evidence supporting a conjectural geometric interpretation of this structure, next one
might ask if all this is of any use. Let us call the algorithm in this paper the degeneration method.
The author suggests at least two practical applications of this method. 

When the morphism $\Yp \rightarrow \Tp$ canonically extending $\Xp \rightarrow \Sp$
over the origin is semistable, under Conjecture \ref{Conj-CS} the degeneration method computes cohomological information about $\Xp_0$ itself. In particular
the algorithm can (conjecturally) be used to compute the zeta function of 
a nodal plane projective curve, by embedding it as the degenerate fibre in a suitable family.
It is a classical fact that any
irreducible projective curve over an algebraically closed field is birational to a plane projective curve with only nodes 
as singularities \cite[Theorem 1.61]{JK}, and this transformation is effective. Thus given any 
geometrically irreducible projective curve over $\fq$ one can explicitly find a semistable plane model over an extension of $\fq$, and use the degeneration method to
compute the zeta function of this plane model; see Example \ref{Ex-SexticDamiano}.

For a second application of the degeneration method,  recall that irreducible plane curves with a single node 
arise naturally when one is interested in calculating the
L-series of a smooth plane curve defined over the integers. The cohomology of these curves gives
the factors at the bad primes. The degeneration method will compute these factors, under Conjecture
\ref{Conj-CS}; see Note \ref{Note-BadPrimes} for an example.
 
We make two concluding remarks. 
First, the existence of a suitably-defined limiting Frobenius structure at $t = 0$ for
any smooth projective family $\Xp \rightarrow \Sp \subset \Aff^1_{\sfq} \backslash \{0\}$ of varieties
actually follows from the semistable reduction theorem of Kedlaya for overconvergent $F$-isocrystals on
an open curve \cite{KKss}; indeed, directly from its local incarnation the $p$-adic local monodromy theorem.
We have just given an independent and effective proof of its
existence for the case of hypersurfaces subject to a lifting assumption, and given
the object a name; see Note \ref{Note-Names} for a discussion of nomenclature. 
(In fact, we show its existence for
any smooth proper family $\Xp \rightarrow \Sp$ subject to a lifting assumption (Definition
\ref{Def-GL} and Theorem \ref{Thm-LFSexist}). 
But without the lifting assumption the situation is much more difficult, and one needs to allow
different covers of $\Sp$ than just $t \mapsto t^e$.)
Kedlaya's semistable reduction theorem motivated the
author to try to adapt the deformation method to handle degenerations.

Second, there is a close analogy with the theory of degenerations of smooth complex algebraic varieties where one
has a limiting mixed Hodge structure \cite[Chapter 11]{PS}. 
However, our situation is even richer. Section \ref{Sec-LFHS} sketches
the construction of a {\it limiting (mixed) Frobenius-Hodge structure} at $t = 0$ in dimension $m$ attached to
a smooth and proper morphism $\XOL \rightarrow \SOL \subset \Aff^1_{\SOL}$ and
a suitably large rational prime $p$ inert in the ring of integers $\OL$ of a number field $\LL$.
This object is new to the author. It is a quadruple $(\HH_0^\bullet,\N_0,\Fr,e)_\W$ consisting of a
filtered $\W$-module $\HH_0^\bullet$ with operators $\N_0$ and $\Fr_0$ such that $\N_0 \Fr_0 = 
p \Fr_0 \N_0$, $\N_0^{m+1} = 0$ and $\Fr_0 \HH_0^i \subseteq p^i \HH_0$, along with a positive integer $e$. 
(Here $\W$ is the ring of integers of the $p$-adic field $\K$.)
When $e = 1$, the pair ``$(\HH_0^\bullet,\N_0)_{\W} \otimes_\W \C$'' is the usual limiting mixed Hodge structure at $t = 0$ of the degeneration of complex
algebraic varieties, and a similar triple to $(\HH_0,\N_0,\Fr_0)_\W$ occurs in crystalline cohomology (Hyodo-Kato cohomology in mixed characteristic).
This structure is computable in an appropriate sense for a degeneration of hypersurfaces and suitably large prime $p$.
One can then extend Conjecture \ref{Conj-CS} by insisting everything
is defined over $\W$ and there is an exact sequence of ``Frobenius-Hodge structures''.  
The existence of this $W$-lattice structure and Hodge filtration on the limiting
Frobenius structures informed the author's choice of
precisions in the computations in Section \ref{Sec-Exs}. 

The paper is organised in the following manner. Section \ref{Sec-Char0} considers the algebraic
de Rham cohomology of degenerations of varieties over a computable field of characteristic
zero. The first main result is some kind of partly ``effective'' semistable reduction theorem in algebraic
de Rham cohomology (Theorem \ref{Thm-SSR}). We then focus on degenerations of projective
hypersurfaces and give an entirely effective proof of semistable reduction for algebraic de Rham cohomology in this 
situation (Theorem \ref{Thm-SSRhyper}). Section \ref{Sec-Charp} considers degenerations in positive
characteristic and introduces limiting Frobenius structures. After showing the notion is well-defined
(Section \ref{Sec-LFSdefuni}) we use Theorem \ref{Thm-SSR} to show that they exist under a lifting assumption (Theorem \ref{Thm-LFSexist} and Definition \ref{Def-GL}). 
We then apply the theory of Section \ref{Sec-Hypersurfaces} to show
that limiting Frobenius structures are effectively computable in an appropriate sense for
degenerations of hypersurfaces subject to a lifting assumption (Definition \ref{Def-GALhyper} and 
Theorem \ref{Thm-LFShypersurfaces}). Next follows a brief sketch of how one endows
our limiting Frobenius structures with an integral structure and Hodge filtration (Section \ref{Sec-LFHS}).
Section \ref{Sec-CS} presents our conjectural ``Clemens-Schmidt'' exact sequence in rigid cohomology
(Conjecture \ref{Conj-CS}). Finally, in Section \ref{Sec-Exs} we present an assortment of
examples computed by the author which give some support to Conjecture \ref{Conj-CS} and also
illustrate our constructions. 
Although the remaining sections are arranged in their logical order, we
recommend that the reader begins by looking through Section \ref{Sec-Exs} and then gently reverses through the paper
before starting in earnest.

 \section{Degenerations in characteristic zero}\label{Sec-Char0}

Let $\F$ be a field of characteristic zero. We wish to be able to compute with elements of $\F$, so
let us assume that $\F$ is countable and one can effectively perform arithmetic operations in $\F$.

\subsection{Relative algebraic de Rham cohomology}

We refer to \cite{KatzOda} for the definition of relative algebraic de Rham cohomology and the Gauss-Manin
connection.
 
Let $\SF \subseteq \Aff_\F^1$ be an open curve, and
$\XF \rightarrow \SF$ be a smooth morphism of algebraic varieties over $\F$ of relative dimension $n$. 
Fix an integer $m$ with $0 \leq m \leq 2n$ and denote by $H^m_{dR} (\XF/\SF)$ the $m$th
relative algebraic de Rham cohomology group of $\XF \rightarrow \SF$. 

Write $\SF = \Spec(\AF)$ where $\AF = \F[t,1/\Delta(t)]$ for some polynomial $\Delta(t) \in \F[t]$. Then
$H^m_{dR}(\XF/\SF)$ is a locally free $\AF$-module of finite rank, which we may assume to be free since
$\AF$ is a principal ideal domain (see for example \cite[Chapter III.7]{SL}). Let
\[ \nabla_{\frac{d}{dt}}: H^m_{dR}(\XF/\SF) \stackrel{\nabla_{GM}}{\rightarrow} H^m_{dR}(\XF/\SF) \otimes \Omega^1_\AF  
\stackrel{id \otimes \frac{d}{dt}}{\rightarrow} H^m_{dR}(\XC/\SC)\]
denote the Gauss-Manin connection $\nabla_{GM}$ contracted with $\frac{d}{dt} \in \HHom(\Omega^1_\AF,\AF)$.
To ease notation we shall write $\nabla := \nabla_{\frac{d}{dt}}$ and call this the Gauss-Manin connection.
The connection $\nabla$ is Leibniz linear; that is,
\[ \nabla(r + s) = \nabla(r) + \nabla (s),\]
\[ \nabla(a r) = \frac{da}{dt} r + a \nabla (r)\]
for $a \in \AF$ and $r,s \in H^m_{dR}(\XF/\SF)$. 

\subsection{Regularity and semistable reduction}

Let $r$ be the rank of $H^m_{dR}(\XF/\SF)$ and $\Basis$ be a basis. 
Denote by $N(t)$ the matrix for the Gauss-Manin connection acting on this basis. The matrix
$N(t)$ has entries in $\AF$. If one changes basis
by a matrix $H \in \GL_r(\AF)$, then since $\nabla$ is Leibniz linear the new matrix for $\nabla$ w.r.t.
the basis $\Basis_{[H]} := \{Hb\}_{b \in \Basis}$
is
\[ N_{[H]} := H N H^{-1} + \frac{dH}{dt} H^{-1}.\]
Suppose that $\Delta(0) = 0$, so that $N(t)$ may have a pole at $t = 0$. Then the ``regularity
theorem'' guarantees that there exists a matrix $H \in \GL_r(\F((t)))$ such that
$N_{[H]}$ has only a simple pole at $t = 0$, see for example \cite{NMK}. 
We use this theorem to prove a stronger and
effective result.

\begin{theorem}[Regularity]\label{Thm-ExplicitReg}
There exists an explicit deterministic algorithm with the following input and output.
The input is the matrix $N(t)$ for the Gauss-Manin connection $\nabla$ on
$H^m_{dR}(\XF/\SF)$ with respect to some basis $\Basis$, where $\XF \rightarrow \SF$ is a smooth
morphism of varieties over a computable field $\F$. The output is
a matrix $H \in \GL_r(\F[t,t^{-1}])$ such that the matrix $N_{[H]}$ for $\nabla$ w.r.t.
the basis $\Basis_{[H]}$ of $H^m_{dR}(\XF/\SF)$ has only a simple pole at $t = 0$.
\end{theorem}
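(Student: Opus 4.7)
The plan is to give an explicit algorithm based on Moser's classical reduction procedure for singularities of linear meromorphic systems. The cited regularity theorem guarantees the \emph{existence} of a gauge transformation $H \in \GL_r(\F((t)))$ bringing $N(t)$ to simple-pole form at $t = 0$; the task is to strengthen this to an algebraic $H \in \GL_r(\F[t,t^{-1}])$ produced by an explicit algorithm. The key observation is that Moser's algorithm uses only two elementary moves, namely constant changes of basis over $\F$ and shearings of the form $\mathrm{diag}(I_{r-s}, t I_s)$, both of which are Laurent polynomial gauge transformations; their composition therefore automatically lies in $\GL_r(\F[t,t^{-1}])$.

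First I would compute the Laurent expansion of $N(t)$ around $t = 0$ to sufficiently high order. Since $N(t)$ has entries in $\AF = \F[t, 1/\Delta(t)]$ and $\F$ is computable, this is routine: factor $\Delta(t) = t^v \Delta_0(t)$ with $\Delta_0(0) \neq 0$ and invert $\Delta_0$ as a power series in $t$ to the required precision. Write $N(t) = \sum_{i \geq -k} N_i t^i$ with $N_{-k} \neq 0$. If $k \leq 1$ the algorithm terminates and returns $H = I$.

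Otherwise iterate the following step. Compute $K := \Ker(N_{-k}) \subset \F^r$ and put $s := \dim K$. Apply a constant $\GL_r(\F)$ change of basis moving $K$ onto the span of the last $s$ standard basis vectors, then apply the shearing $H_{\mathrm{step}} := \mathrm{diag}(I_{r-s}, t I_s)$, and replace $N$ by its transformed matrix $N_{[H_{\mathrm{step}}]}$, recomputing the Laurent expansion to the required order. Moser's key lemma shows that the invariant $m(N) := (k - 1) + \mathrm{rank}(N_{-k})/r$ strictly decreases under this procedure provided $N$ is ``Moser-reducible'', while Moser's complementary result is that a non-reducible matrix with $k \geq 2$ would force the singularity to be irregular. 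Since the classical regularity theorem supplies a formal meromorphic gauge bringing $N$ to simple-pole form, no such irregularity is present, so the loop must terminate at $k \leq 1$.

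The main obstacle is bounding termination and precision quantitatively in a way that yields an honest algorithm rather than merely a decidable procedure. Each iteration decreases the Moser invariant by at least $1/r$, so the loop terminates in at most $r \cdot m(N^{(0)})$ steps, where $N^{(0)}$ is the initial matrix; and between iterations the pole order can grow by at most one under the shearing step, so the Laurent precision needed at each stage is bounded a priori by the initial Moser invariant. Since all linear algebra is carried out over the computable field $\F$, this yields an explicit deterministic algorithm outputting the desired $H \in \GL_r(\F[t, t^{-1}])$.
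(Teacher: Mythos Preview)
Your approach via Moser reduction is genuinely different from the paper's and the overall strategy is sound. The paper proceeds instead by finding a cyclic vector $\omega$ for $(H^m_{dR}(\XF/\SF), t\nabla)$ over $\F((t))$; in the cyclic basis $\{(t\nabla)^i\omega\}$ the connection matrix is in companion form and, by regularity together with \cite[Corollary 7.1.3]{KKbook}, has only a simple pole at $t=0$. The change-of-basis matrix $G$ lies in $\GL_r(\F(t))$, and a factorisation lemma writes $G = LH$ with $L \in \GL_r(\F[[t]])$ and $H \in \GL_r(\F[t,t^{-1}])$; since $L$ is a unit at $t=0$, passing from $\Basis_{[G]}$ back to $\Basis_{[H]}$ preserves the simple pole. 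Your route avoids cyclic vectors entirely and is arguably more practical (the paper itself remarks that alternatives to the cyclic vector are desirable); the paper's route, on the other hand, makes termination completely transparent and yields the factorisation lemma as a reusable tool.

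There is one genuine imprecision. The step you describe --- move $\Ker(N_{-k})$ into the last $s$ coordinates by a constant change of basis, then shear by $\mathrm{diag}(I_{r-s}, tI_s)$ --- is \emph{not} Moser's reduction step, and it does not in general decrease the Moser invariant even when the system is Moser-reducible. Moser's reducibility criterion and the accompanying reduction depend on both $N_{-k}$ and $N_{-k+1}$ (via the vanishing of a certain polynomial built from them), and the constant transformation must be chosen accordingly, not merely from $\Ker(N_{-k})$. With the correct form of Moser's algorithm --- which still uses only $\GL_r(\F)$-transformations and diagonal shearings $\mathrm{diag}(t^{a_1},\dots,t^{a_r})$, hence composes to an element of $\GL_r(\F[t,t^{-1}])$ as you want --- your argument goes through unchanged.
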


\begin{proof}
If $\Delta(0) \ne 0$ then we are done since the matrix $N(t)$ will not have a pole at $t = 0$.
So assume $\Delta(0) = 0$.
For $\omega \in H^m_{dR}(\XF/\SF)$, let $\Basis_{\omega} := \{(t \nabla)^i(\omega)\}_{0 \leq i < r }$.
By definition, $\omega$ is a cyclic vector for the differential system $(H^m_{dR}(\XF/\SF),(t\nabla))$ over
$\F((t))$
if and only if $\Basis_\omega$ is a linearly independent set over $\F((t))$. For any given $\omega$ expressed
in the basis $\Basis$ one may check whether $\omega$ is cyclic by computing a determinant since
one can calculate the action of $t\nabla$ via $t\frac{d}{dt} + tN(t)$. Now a cyclic
vector exists amongst a certain finite list of $\F[t]$-linear combinations of the basis
$\Basis$ according to \cite[Lemma 2.10]{vPS}. Thus one may effectively find a cyclic vector, $\omega$ say.
The differential system is regular, by the regularity theorem.
Thus by \cite[Corollary 7.1.3]{KKbook},
the matrix for the connection w.r.t. the basis $\Basis_{\omega}$ of
$H^m_{dR}(\XF/\SF) \otimes_{\AF} \F((t))$
has only a simple pole at $t = 0$. We can compute the change of basis matrix $G$ between 
$\Basis$ and $\Basis_{\omega}$, since we can explicitly compute the action of $(t\nabla)^i$ on $\omega$. 
Note that $G \in \GL_r(\F(t))$. Embedding $\F(t) \hookrightarrow \F((t))$ and
using Lemma \ref{Lem-Factor}, we may factor $G = LH$ where $H \in \GL_r(\F[t,t^{-1}])$ and $L \in \GL_r(\F[[t]])$.
Then $N_{[H]}$ has only a simple pole at $t = 0$, since $N_{[H]} = (N_{[G]})_{[L^{-1}]}$ with
$N_{[G]}$ having a simple pole and $L^{-1}$ no pole at $t = 0$. Moreover,
$\Basis_{[H]}$ is a global basis for $H^m_{dR}(\XF/\SF)$ since $H(t) \in \GL_r(\F[t,t^{-1}]) \subseteq \GL_r(\AF)$.
Thus $H$ is the desired change of basis matrix.
\end{proof}
 
 \begin{lemma}\label{Lem-Factor}
 Given any $G \in \GL_r(\F((t)))$ there exists $H \in \GL_r(\F[t,t^{-1}])$ and $L \in \GL_r(\F[[t]])$ such that $G = LH$. Moreover, if $G \in \GL_r(\F(t)) \subset \GL_r(\F((t)))$ then such $H$ and $L \in \GL_r(\F[[t]] \cap
 \F(t))$ are effectively computable.
 \end{lemma}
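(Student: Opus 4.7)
The plan is induction on the size $r$. The base case $r = 1$ is immediate: any $G \in \F((t))^\times$ factors as $G = u(t) \cdot t^k$ with $u \in \F[[t]]^\times$ and $k \in \Z$.

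For the inductive step, first I would pick the entry of minimum $t$-adic valuation in the first column of $G$ and use it to perform row operations, producing $U \in \GL_r(\F[[t]])$ with
\[ UG = \begin{pmatrix} t^v & w \\ 0 & G' \end{pmatrix}, \]
where $v$ is the minimum valuation, $w \in \F((t))^{r-1}$ is a row vector, and $G' \in \GL_{r-1}(\F((t)))$. By induction, $G' = L' H'$ with $L' \in \GL_{r-1}(\F[[t]])$ and $H' \in \GL_{r-1}(\F[t,t^{-1}])$. I then seek a factorisation
\[ \begin{pmatrix} t^v & w \\ 0 & G' \end{pmatrix} = \begin{pmatrix} 1 & w' \\ 0 & L' \end{pmatrix} \begin{pmatrix} t^v & w'' \\ 0 & H' \end{pmatrix}, \]
which, on equating blocks, amounts to solving $w = w'' + w'H'$ with $w' \in \F[[t]]^{r-1}$ and $w'' \in \F[t,t^{-1}]^{r-1}$. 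To do this I would write $wH'^{-1} = y + z$ with $y \in t^{-1}\F[t^{-1}]^{r-1}$ collecting the negative-degree terms and $z \in \F[[t]]^{r-1}$, then set $w'' := yH'$ (Laurent-polynomial since $y$ and $H'$ both are) and $w' := z$. Combining, $G = LH$ with $L := U^{-1}\bigl(\begin{smallmatrix} 1 & w' \\ 0 & L' \end{smallmatrix}\bigr) \in \GL_r(\F[[t]])$ and $H := \bigl(\begin{smallmatrix} t^v & w'' \\ 0 & H' \end{smallmatrix}\bigr) \in \GL_r(\F[t,t^{-1}])$ of the required form.

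For effectiveness when $G \in \GL_r(\F(t))$, I would trace the recursion and check that each intermediate quantity stays inside $\F(t)$, with the $L$-factor further lying in the subring $\F[[t]] \cap \F(t)$. The clearing matrix $U$ is a product of permutations and elementary matrices with multipliers of the form $g_{i,1}/g_{i_0,1}$; these lie in $\F[[t]] \cap \F(t)$ precisely because the denominator attains the minimum column valuation. Splitting a rational function $f = p(t)/(t^k q(t))$ with $q(0) \ne 0$ into principal and regular parts at $t = 0$ is effective: perform the division of $p$ by $q$ in $\F[[t]]$ modulo $t^k$ and read off the terms of negative degree. All other operations are arithmetic in $\F(t)$, so the recursion terminates and produces $L$ and $H$ explicitly.

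The step that needs most care is the block-triangular splitting producing $w'$ and $w''$: it relies on the inductively obtained $H'$ being both Laurent-polynomial and invertible there, so that $wH'^{-1}$ makes sense and so that $w'' = yH'$ remains Laurent-polynomial. Everything else reduces either to row reduction over the discrete valuation ring $\F[[t]]$ (or its subring $\F[[t]] \cap \F(t)$ in the effective setting) or to elementary manipulations of Laurent series, both of which are routine.
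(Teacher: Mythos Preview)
Your proposal is correct and takes essentially the same approach as the paper: both argue by induction on $r$, first using row operations over $\F[[t]]$ to clear the first column to a monomial pivot, then invoking the inductive hypothesis on the lower-right $(r-1)\times(r-1)$ block, and finally splitting the remaining first-row entries into Laurent-polynomial and power-series parts via a row operation of the form ``identity except in positions $(1,i)$ for $2 \le i \le r$''. The only organisational difference is that the paper first reduces $G$ all the way to upper triangular with monomial diagonal before applying induction, whereas you apply induction immediately to the block $G'$; the content is the same, and your handling of effectiveness (tracking that all quantities stay in $\F(t)$ and that principal-part extraction is computable) matches the paper's brief remark.
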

 
 \begin{proof}
 Let $G \in \GL_r(\F((t)))$. We prove by induction on $r$ that there exists
 $L \in \GL_r(\F[[t]])$ such that $LG$ is
 upper triangular with monomials on the diagonal and elements in $\F[t,t^{-1}]$ above the diagonal.
 The case $r = 1$ is immediate. For $r > 1$, by row operations (equivalently, 
 premultiplying by a suitable matrix in $\GL_r(\F[[t]])$) 
 one may reduce to the case in which
 $G$ is upper triangular with monomials on the diagonal and elements in $\F((t))$ above the diagonal. Next, by
 induction one may assume that all entries in $G$ above the diagonal except those on the first row lie in $\F[t,t^{-1}]$. 
 Finally, by an explicit calculation one shows that there exists a matrix $L \in \GL_r(\F[[t]])$ of the following form such that
 $LG$ is as required: $L$ equals the identity matrix except for elements in positions $(1,i)$ for $2 \leq i \leq r$. 
 Note that the factorisation is unique up to multiplication by an element of $\GL_r(\F[t])$.
 In the case in which $G \in \GL_r(\F(t))$ one checks that this proof describes an effective algorithm and 
 $L \in \GL_r(\F[[t]] \cap \F(t))$.
 \end{proof}

\begin{note}
The use of a cyclic vector and the factorisation in Lemma \ref{Lem-Factor} were suggested to the author
by Kiran Kedlaya.
Kedlaya also suggests replacing the use of a cyclic vector by a basis over $\F[[t]]$ for the
$(t\nabla)$-stable lattice in $\F((t))$ generated by the elements $\{(t\nabla)^i(b)\}$ for $0 \leq i < r$ and
$b \in \Basis$. We further note that one could use Katz's explicit construction of a cyclic vector \cite[Lemma 2.11]{vPS};
see paragraph 2 in the proof of \cite[Theorem 5.4.2]{KKbook} for how to get around the assumption of Katz
that one has an element ``$z$'' in $\F((t))$ with ``$z^\prime = 1$'' where in our case the dash is the operator
$t\frac{d}{dt}$.
\end{note}

The significance of Theorem \ref{Thm-ExplicitReg} is that using a change of basis matrix which
is invertible over $\F[t,t^{-1}]$ does not introduce any new poles in the matrix for the connection. 
Thus when $\F$ is a countable subfield of
the field of $p$-adic numbers
(or the complex numbers), one does not change the radius of convergence on the punctured disk
around zero of the connection matrix. This is essential in applications of the theorem to $p$-adic cohomology
(Section \ref{Sec-Charp}).

We now turn to the problem of semistable reduction.
For $e$ a positive integer, write $\AF_e :=  \F[s,1/\Delta(s^e)]$ and $\SF_e := \Spec(\AF_e)$.
Define $\AF \rightarrow \AF_e$ by $t \mapsto s^e$ to get a morphism $\SF_e \rightarrow \SF$.
One may pull-back the family $\XF \rightarrow \SF$ via $\SF_e \rightarrow \SF$
to get $\XF_e \rightarrow \SF_e$ where $\XF_e := \XF \times_{\SF} \SF_e$. We have
\[ H^m_{dR}(\XF_e/\SF_e) \cong H^m_{dR}(\XF/\SF) \otimes_\AF \AF_e\]
since $\AF_e$ is a flat $\AF$-module.
For a basis $\Basis$ of $H^m_{dR}(\XF/\SF)$ let $\Basis_e := \{b \otimes 1_{\AF_e}\}$ be
the associated basis of $H^m_{dR}(\XF_e/\SF_e)$. As before let
$N(t)$ be the matrix for the Gauss-Manin connection w.r.t. the basis $\Basis$.
Then the matrix
for the Gauss-Manin connection 
\[ \nabla_e: H^m_{dR}(\XF_e/\SF_e) \stackrel{\nabla_{GM}}{\rightarrow} H^m_{dR}(\XF_e/\SF_e) \otimes \Omega^1_{\AF_e}  
\stackrel{id \otimes \frac{d}{ds}}{\rightarrow} H^m_{dR}(\XF_e/\SF_e)\]
w.r.t. the basis $\Basis_e$ is
\[ (es^{e-1}) N(s^e),\]
since $dt = e s^{e-1} ds$.

The problem of local semistable reduction in algebraic de Rham cohomology is to find a positive integer
$e$ and basis for $H^m_{dR}(\XF_e/\SF_e)$ such that the matrix for the connection $\nabla$ has only
a simple pole at $s = 0$ with nilpotent residue matrix.

\begin{theorem}[Semistable reduction]\label{Thm-SSR}
There exists an explicit deterministic algorithm with the following input and output.
The input is the matrix $N(t)$ for the Gauss-Manin connection on
$H^m_{dR}(\XF/\SF)$ w.r.t. some basis $\Basis$, where $\XF \rightarrow \SF$ is a smooth
morphism of varieties over a computable field $\F$. The output is an integer $e$ and 
matrix $H(s) \in \GL_r(\F[s,s^{-1}])$ such that the matrix
\[ (es^{e-1}N(s^e))_{[H(s)]} := Hes^{e-1}N (s^e) H^{-1} + \frac{dH}{ds} H^{-1} \]
for the Gauss-Manin connection $\nabla_e$ on the basis $(\Basis_e)_{[H(s)]}$ of
$H^m_{dR}(\XF_e/\SF_e)$ has only a simple pole at $s = 0$ with nilpotent residue matrix.
\end{theorem}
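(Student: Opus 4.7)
The plan is to reduce the problem in three explicit steps, each built from a change of basis lying in $\GL_r(\F[t,t^{-1}])$ or $\GL_r(\F[s,s^{-1}])$: (i) achieve a simple pole at the origin via Theorem \ref{Thm-ExplicitReg}; (ii) take a suitable $e$-th root cover $t = s^e$ so that the residue acquires integer eigenvalues; (iii) apply a finite sequence of \emph{shearing transformations} to shift each integer eigenvalue down to zero, yielding a nilpotent residue. Since every change of basis will be defined over $\F[t,t^{-1}]$ or $\F[s,s^{-1}]$, no new poles are introduced at points of $\SF_e$ other than $s=0$.

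First I would invoke Theorem \ref{Thm-ExplicitReg} on $N(t)$ to obtain $H_0 \in \GL_r(\F[t,t^{-1}])$ such that $M(t) := N_{[H_0]}(t) = R_0/t + (\text{holomorphic})$ with $R_0 \in \Mat_r(\F)$. For the choice of $e$, I would appeal to the monodromy theorem for Gauss--Manin connections (Brieskorn, Landman, Grothendieck) to conclude that the eigenvalues of $R_0$ are rational, then compute the characteristic polynomial of $R_0$ in $\F[x]$ and read off its rational roots by effective factorization over the computable field $\F \supseteq \Q$. Setting $e$ to be the least common multiple of the denominators of these rational roots, the pullback produces connection matrix $e s^{e-1} M(s^e)$, which has a simple pole at $s=0$ with residue $e R_0$ whose eigenvalues are now integers.

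The third step iterates the following shearing: while the current residue $R$ has a nonzero integer eigenvalue $k$, let $P \in \Mat_r(\F)$ be the projector onto the generalized $k$-eigenspace of $R$, and apply the change of basis $H := s^{-\mathrm{sgn}(k)}P + (I-P) \in \GL_r(\F[s,s^{-1}])$. Using that $R$ preserves the decomposition $\mathrm{Im}(P) \oplus \mathrm{Im}(I-P)$, a short block computation shows that the transformed matrix continues to have only a simple pole at $s=0$, with new residue of the block form
\[ \begin{pmatrix} R|_{\mathrm{Im}(P)} - \mathrm{sgn}(k) I & * \\ 0 & R|_{\mathrm{Im}(I-P)} \end{pmatrix}. \]
Thus the eigenvalue $k$ is replaced by $k - \mathrm{sgn}(k)$ while every other eigenvalue (and every Jordan block size) is preserved, so $|k|$ strictly decreases and integrality is maintained. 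After at most $\sum_i |k_i|$ iterations (where the $k_i$ are the integer eigenvalues of $eR_0$ with multiplicity) the residue becomes nilpotent, and composing all intermediate change-of-basis matrices yields the desired $H(s)$.

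The hard part of this plan is the second step: the rationality of the eigenvalues of $R_0$ is a nontrivial geometric input (the local monodromy theorem for Gauss--Manin connections), and its use demands the ability to extract rational roots from a polynomial in $\F[x]$, hence an effective factorization algorithm over $\F$. The third step is purely linear-algebraic but requires careful bookkeeping in the non-semisimple case to confirm both that the Jordan structure outside the modified eigenvalue is preserved and that the simple-pole property is not destroyed by the new off-diagonal contributions coming from the shearing.
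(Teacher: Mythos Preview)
Your proposal is correct and follows essentially the same three-step approach as the paper: apply Theorem~\ref{Thm-ExplicitReg} to get a simple pole, invoke the local monodromy theorem for rationality of the residue eigenvalues and pull back by $t=s^e$ to make them integral, then shear to nilpotent residue. The only cosmetic difference is that the paper outsources the shearing step to \cite[Lemma III.8.2]{DGS} (which first upper-triangularizes the residue over $\F$, possible since the eigenvalues lie in $\Z \subset \F$), whereas you carry it out directly via spectral projectors; both approaches yield change-of-basis matrices in $\GL_r(\F[s,s^{-1}])$.
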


\begin{proof}
By Theorem \ref{Thm-ExplicitReg} one reduces to the case in which $N(t)$ has only a simple pole
at $t = 0$. Let $\{\lambda_i\}_{1 \leq i \leq r}$ denote the eigenvalues of the residue matrix. By the local
monodromy theorem we have $\lambda_i \in \Q$, see \cite{NMK}. Thus we may find a smallest positive integer $e$ such that
$e \lambda_i \in \Z$ for all $1 \leq i \leq r$. Then $e s^{e-1}N(s^e)$ has only a simple pole at
$s = 0$ and a residue matrix with integer eigenvalues. Shearing transformations now allow one to find
a matrix $H(s) \in \GL_r(\F[s,s^{-1}])$ such that $(es^{e-1}N(s^e))_{[H]}$ has only a simple pole with
nilpotent residue matrix \cite[Lemma III.8.2]{DGS}. (Note that since in our application the eigenvalues of the
residue matrix lie in $\Z$, the initial reduction to upper triangular form of the residue matrix in the proof of
\cite[Lemma III.8.2]{DGS} can be performed over $\F$.)
\end{proof}

\begin{note}
Since the change of basis matrix $H$ lies in $\GL_r(\F[s,s^{-1}])$ one can perform global semistable reduction over
the whole of the affine basis $\SF$. That is, after pulling back by a chain of unramified covers 
$\SF_e \rightarrow \SF$,  effectively find a matrix for the connection which has at worst simple poles at all 
finite points with nilpotent residue matrices. Moreover, this can be done without introducing any new finite poles to the
differential system. Alternatively, without a base change one can effectively find a matrix for the connection with only simple
poles at all finite points --- this is useful when computing in the cokernel 
\[ H^1_{dR}(\SF,H^m_{dR}(\XF/\SF)) := \Coker(\nabla_{GM})\] 
of the connection $\nabla_{GM}$ \cite[Section 4]{L1}.
\end{note}

\subsection{Hypersurfaces}\label{Sec-Hypersurfaces}

The first significant result of this section is Theorem \ref{Thm-GMDwork}, on the construction of the Picard-Fuchs
differential system for families of projective hypersurfaces. Our construction
is based upon the method of Griffiths and Dwork. However, it differs
from the usual presentation in the literature (for example \cite[Section 5.3]{CK}) in one key way. We
avoid Gr\"{o}bner basis computations, and instead work with linear algebra and Macaulay-style resultant matrices. This has a number of benefits. First, it allows us to give an explicit a priori characterisation of the
locus of poles of the Picard-Fuchs system w.r.t. a particular basis, and thus prove a
theorem on explicit semistable reduction for hypersurfaces (Theorem \ref{Thm-SSRhyper}).
Second, it reveals additional structure in the Picard-Fuchs system (Corollary \ref{Cor-PFstructure}).
Third, in the author's own experience, it is faster in
practical implementations (Note \ref{Note-Pancratz}). Combining Theorems \ref{Thm-SSR} and \ref{Thm-GMDwork}
yields Theorem \ref{Thm-SSRhyper}, the main result of Section \ref{Sec-Char0}.

\subsubsection{Cohomology of the generic hypersurface}

In this section we fix $n \geq 2$ and $d \geq 0$ and let $x_0,x_1,\ldots,x_{n+1}$ be variables.
For $w = (w_0,w_1,\dots,w_{n+1}) \in \Z_{\geq 0}^{n+2}$ define $x^w := x_0^{w_0} x_1^{w_1} \cdots x_{n+1}^{w_{n+1}}$ and
$|w| := w_0 + w_1 + \cdots + w_{n+1}$. For each $w \in \Z_{\geq 0}^{n+2}$  with $|w| = d$ we introduce a variable
$a_w$ and define $\PR := \sum_{|w| = d} a_w x^w$ to be the ``generic polynomial'' homogeneous of degree $d$ with
coefficients in the multivariate polynomial ring $\UnivR := \Z[a_w\,|\, |w| = d]$. Write 
\[ \Omega := \sum_{i = 0}^{n+1} (-1)^i x_i dx_0 \wedge \cdots \wedge \widehat{dx_i} \wedge \cdots \wedge dx_{n+1}\]
where the hat denotes omission, and define
\[ \Basis_\UR := \left\{ \frac{x^w \Omega}{\PR^k}\,|\,0 \leq w_i < d-1 \mbox{ and } \exists k \in \Z \mbox{ s.t. } |w| + n + 2 = kd \right\}.\] 
Elements in $\Basis_\UR$ are differential forms on a certain affine scheme $\UR$ which is defined
in the statement of the following theorem.

\begin{theorem}\label{Thm-GP}
There exists an explicitly computable non-zero polynomial $\Delta_{n,d} \in \UnivR$ such that
the following is true. Define
$\UnivR^{(1)} := \UnivR[1/\Delta_{n,d}]$ and $\SR := \Spec(\UnivR^{(1)})$. Let $\XR \subset \pr^{n+1}_{\SR}$ be the
projective hypersurface defined by the polynomial $P$, and let $\UR$ be the affine complement of $\XR$ in $\pr^{n+1}_{\SR}$. Then the set of
cohomology classes $\{[b]\,|\, b \in \Basis_\UR\} \subset H^{n+1}_{dR}(\UR/\SR)$ is an $\UnivR^{(1)}$-basis for
$H^{n+1}_{dR}(\UR/\SR)$.
\end{theorem}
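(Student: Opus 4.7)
The plan is to realize $H^{n+1}_{dR}(\UR/\SR)$ via Griffiths' description of the cohomology of a hypersurface complement, and then to replace the usual Gröbner-basis step by a Macaulay-style linear-algebra construction which makes the exceptional locus visible as a single polynomial $\Delta_{n,d}$. Since $\UR$ is affine, every top-degree class has a representative $A\Omega/\PR^k$ with $A \in \UnivR[x_0,\ldots,x_{n+1}]$ homogeneous of degree $kd - (n+2)$ and $k \ge 1$. The exact forms $d\bigl(\sum_i B_i \Omega_i/\PR^{k-1}\bigr)$, where $\Omega_i$ denotes the contraction of $\Omega$ by $\partial/\partial x_i$, yield the Griffiths--Dwork pole-reduction relation
$$\frac{k\sum_i B_i \partial_i \PR}{\PR^{k+1}}\Omega \;\equiv\; \frac{\sum_i \partial_i B_i}{\PR^k}\Omega \pmod{\text{exact}}.$$
Modulo these relations, the pole-order-$k$ graded piece becomes a quotient of $\UnivR[x]_{kd-(n+2)}$ by the degree-$e$ part $J_e$ of the Jacobian ideal $J := (\partial_0 \PR,\ldots,\partial_{n+1}\PR)$, where $e := kd-(n+2)$.

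For each relevant $e$ I would consider the $\UnivR$-linear map
$$\phi_e : \bigoplus_{i=0}^{n+1} \UnivR[x]_{e-(d-1)} \to \UnivR[x]_e, \qquad (B_0,\ldots,B_{n+1}) \mapsto \sum_i B_i \partial_i \PR,$$
whose image is $J_e$. Order the monomial basis of the target so that the ``reduced'' monomials $\{x^w : 0 \le w_i \le d-2,\ |w|=e\}$ come last, and extract from the Macaulay--Sylvester matrix of $\phi_e$ a square submatrix $M_e$ whose columns express every non-reduced monomial modulo the reduced ones. Set $\Delta_{n,d} := \prod_e \det(M_e) \in \UnivR$. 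To see that $\Delta_{n,d} \ne 0$, specialize at the Fermat polynomial $\PR_1 := \sum_i x_i^d$: the ideal $J$ becomes $(x_0^{d-1},\ldots,x_{n+1}^{d-1})$, the reduced monomials are obviously complementary to $J$ in each graded piece, and so each $M_e$ specializes to an invertible matrix over $\Z$.

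On $\SR = \Spec(\UnivR^{(1)})$ with $\UnivR^{(1)} := \UnivR[1/\Delta_{n,d}]$ every $M_e$ is invertible, so each monomial of degree $e$ can be written uniquely modulo $J_e$ as a $\UnivR^{(1)}$-linear combination of reduced monomials. Starting from an arbitrary representative $A\Omega/\PR^k$ and iterating the two moves ``reduce $A$ modulo $J_e$ by $M_e$'' and ``apply the Griffiths--Dwork relation to lower the pole order by one'' terminates in at most $k$ steps and produces a representative in the $\UnivR^{(1)}$-span of $\Basis_\UR$. For linear independence one uses that the smooth projective family of hypersurfaces cut out by $\PR$ over $\SR$ gives a locally free cohomology module (hence a free module, $\UnivR^{(1)}$ being a localization of a polynomial ring), specializes to the Fermat fibre where the reduced monomials really are a basis, and concludes that the surjection $(\UnivR^{(1)})^{|\Basis_\UR|} \twoheadrightarrow H^{n+1}_{dR}(\UR/\SR)$ of equal-rank free modules is an isomorphism.

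The main obstacle, and what the Macaulay approach buys over a Gröbner computation, is the uniform organization of the reduction into a finite family of matrices $\{M_e\}$ whose determinants combine into a single polynomial $\Delta_{n,d}$: naively, repeated pole reduction introduces a cascade of auxiliary denominators, but here each pole-order step is controlled by one and the same matrix in its appropriate degree, so invertibility of the product of determinants suffices. Verifying that the chosen reduced monomials genuinely provide a direct complement of $J_e$ in every relevant degree is exactly what is checked at the Fermat point, and this is the heart of the argument.
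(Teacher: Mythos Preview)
Your approach is essentially that of the paper: Griffiths' pole-order description of $H^{n+1}_{dR}(\UR/\SR)$ together with Macaulay-style linear algebra in place of Gr\"{o}bner bases, and the Fermat specialization to certify non-vanishing. But there is one genuine gap and one minor wobble.

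\medskip

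\textbf{The gap: finiteness of the product $\prod_e \det(M_e)$.} You define $\Delta_{n,d}$ as a product of determinants $\det(M_e)$ over ``relevant'' degrees $e$, and then say that starting from an \emph{arbitrary} representative $A\Omega/\PR^k$ you iterate pole reduction. But each reduction from pole order $k$ to $k-1$ requires writing the degree-$(kd-n-2)$ numerator in the image of $\phi_e$ (modulo reduced monomials), and for $k>n+1$ this is a degree $e$ for which you have not included $\det(M_e)$ in your product. So either your product is infinite (hence not a polynomial), or you are silently assuming that invertibility of the $M_e$ for $1\le k\le n+1$ forces surjectivity of $\phi_e$ for all larger $e$ --- a nontrivial fact you do not prove. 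The paper handles exactly this point by taking $\Delta_{n,d}^{(n+1)}$ to be the classical Macaulay resultant of $\partial_0\PR,\ldots,\partial_{n+1}\PR$: this \emph{single} determinant, once inverted, makes $\phi_e$ surjective for every degree $e$ above the critical one, so all pole orders $k>n+1$ collapse to $k\le n+1$ uniformly. Only then does the paper introduce one further Macaulay-style matrix per pole order $k=n+1,n,\ldots,1$ (determinants $\Delta_{n,d}^{(n)},\ldots,\Delta_{n,d}^{(1)}$), giving a finite product of $n+1$ factors. Your construction is missing precisely this top piece.

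\medskip

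\textbf{Minor: the freeness step.} Your linear-independence argument asserts that locally free over $\UnivR^{(1)}$ implies free because $\UnivR^{(1)}$ is a localization of a polynomial ring. Quillen--Suslin does not pass to arbitrary localizations, so this is not justified as stated. Fortunately you do not need it: a surjection from a free module of rank $r$ onto a projective module of rank $r$ over a domain is automatically an isomorphism (the kernel is a rank-zero projective summand, hence zero). So drop the freeness claim and your rank-comparison argument still goes through. The paper instead establishes an isomorphism at each pole-order step directly, invoking Dimca's result that for smooth hypersurfaces the pole filtration satisfies $B_k = B\cap Z_k$; either route is fine.
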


\begin{proof}
This is an explicit computation carried out in the Section \ref{Sec-Red}.
\end{proof}

\begin{definition}\label{Def-GenPos}
We shall say that a hypersurface of degree $d$ and dimension $n$ over a field of characteristic zero
is smooth and in general position if its coefficients are not zeros of $\Delta_{n,d}$.
\end{definition}

It will follow immediately from our definition of $\Delta_{n,d}$ in Section \ref{Sec-Red} that a hypersurface which is
``smooth and in general position'' is indeed smooth.

With the notation of the theorem above, there is an algebraically defined map called the residue map
\[ H^{n+1}_{dR}(\UR/\SR) \stackrel{{\rm Res}}{\rightarrow} H^n_{dR}(\XR/\SR). \]
It is injective and the image is by (our) definition the primitive middle-dimensional cohomology
\[ \HH_\SR := {\rm Prim}\, H^n_{dR}(\XR/\SR).\]
Note that $\HH_\SR = H^n_{dR}(\XR/\SR)$ when $n$ is odd, and $H^n_{dR}(\XR/\SR)/\HH_\SR$
is free of rank $1$ when $n$ is even. Define
\[ \Basis_\XR := \{ {\rm Res}([b]): b \in \Basis_\UR\}.\]

\begin{corollary}\label{Cor-DworkBasis}
With the notation of Theorem \ref{Thm-GP},
 the set of residues $\Basis_\XR$ is an $\UnivR^{(1)}$-basis for the primitive middle-dimensional algebraic
de Rham cohomology $\HH_\SR$ of $\XR \rightarrow \SR$.
\end{corollary}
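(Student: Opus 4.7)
The corollary is essentially a formal consequence of Theorem \ref{Thm-GP} together with the stated properties of the residue map, so the plan is simply to chain these facts together. My approach is to verify that the residue map ${\rm Res}: H^{n+1}_{dR}(\UR/\SR) \rightarrow H^n_{dR}(\XR/\SR)$ is $\UnivR^{(1)}$-linear, then exploit the fact that it is injective with image $\HH_\SR$ to transport the basis $\{[b] \,|\, b \in \Basis_\UR\}$ of the domain across to a basis of $\HH_\SR$.

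More concretely, first I would observe that the residue map is $\UnivR^{(1)}$-linear: it arises from a morphism of complexes of relative differentials and so commutes with multiplication by sections of the structure sheaf of $\SR$, hence descends to an $\UnivR^{(1)}$-linear map on the relative de Rham cohomology groups. Next I would invoke Theorem \ref{Thm-GP} to conclude that $\{[b] \,|\, b \in \Basis_\UR\}$ is an $\UnivR^{(1)}$-basis for $H^{n+1}_{dR}(\UR/\SR)$. Since ${\rm Res}$ is injective, an $\UnivR^{(1)}$-linear combination $\sum_b c_b \,{\rm Res}([b]) = 0$ forces $\sum_b c_b [b] = 0$ and therefore all $c_b = 0$, giving $\UnivR^{(1)}$-linear independence of $\Basis_\XR$. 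Likewise, any element of $\HH_\SR$ is by definition of the form ${\rm Res}(\eta)$ for some $\eta \in H^{n+1}_{dR}(\UR/\SR)$; writing $\eta$ as an $\UnifR^{(1)}$-linear combination of the $[b]$ and applying ${\rm Res}$ then expresses it as a combination of elements of $\Basis_\XR$, so $\Basis_\XR$ spans $\HH_\SR$.

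The main (and only genuine) obstacle is of course Theorem \ref{Thm-GP} itself, whose proof is deferred to Section \ref{Sec-Red} and requires the Griffiths--Dwork reduction machinery together with the explicit construction of the discriminant-type polynomial $\Delta_{n,d}$; granting that result and the injectivity/image-characterization of ${\rm Res}$ (which is part of the paper's definition of primitive middle-dimensional cohomology), the corollary is purely formal. A useful consistency check during the write-up is that the cardinality of $\Basis_\UR$ equals the expected rank of the primitive middle-dimensional cohomology of a smooth hypersurface of degree $d$ and dimension $n$, which follows from the standard count of monomials $x^w$ with $0 \leq w_i < d-1$ and $d \mid |w|+n+2$.
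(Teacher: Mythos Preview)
Your proposal is correct and matches the paper's approach: the paper states the corollary without proof, treating it as an immediate formal consequence of Theorem \ref{Thm-GP} together with the injectivity of ${\rm Res}$ and the definition of $\HH_\SR$ as its image, which is exactly the argument you spell out. One trivial point: you have a typo ``$\backslash$UnifR'' for ``$\backslash$UnivR'' in the spanning paragraph.
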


We shall call $\Basis_\XR$ the Dwork basis of $\HH_\SR$.

\subsubsection{From the universal to the generic hypersurface}\label{Sec-Red}

This section contains our explicit proof of Theorem \ref{Thm-GP}. We examine the primitive middle-dimensional
algebraic de Rham cohomology
of a hypersurface, starting with a universal one and then specialising to the generic smooth hypersurface and
then the hypersurface which is ``smooth and in general position''. Here ``in general position'' is a utilitarian
definition which allows us to write down a certain basis for cohomology.

Let us introduce some new notation for use only in this section. 
For the ring $\UnivR = \Z[a_w\,|\, |w| = d]$ let $\pr^{n+1}_\UnivR$ denote projective
space of dimension $n+1$ over $\UnivR$. Let $\XR/\UnivR$ 
be the ``universal'' projective hypersurface of degree $d$ in $\pr^{n+1}_\UnivR$ defined
by the polynomial $P = \sum_{|w| = d}  a_w x^w$. Denote by $\UR/\UnivR$ its affine complement. Then
\[ H^\bullet_{dR}(\UR/\UnivR) := \Hyper (\Omega^\bullet_{\UR/\UnivR}) = 
H(\Gamma(\UR/\UnivR,\Omega^\bullet_{\UR/\UnivR})).\]
Here $\Hyper(\cdot)$ denotes hypercohomology of a complex of sheaves, $H(\cdot)$ 
homology of a complex of modules, and the second equality follows since
$\UR/\UnivR$ is affine and so the \v{C}ech-de Rham spectral sequence for computing hypercohomology
degenerates at the first term. In particular since $\Omega^{n+2}_{\UR/\UnivR} = 0$, 
\[ H^{n+1}_{dR}(\UR/\UnivR) = \frac{\Gamma(\UR/\UnivR,\Omega^{n+1}_{\UR/\UnivR})}{d\Gamma(\UR/\UnivR,\Omega^{n}_{\UR/\UnivR})},\]
where $d$ is derivation of $n$-forms. That is, 
$H^{n+1}_{dR}(\UR/\UnivR)$ is global (closed) $(n+1)$-forms modulo exact ones.

Here is an explicit description of $H^{n+1}_{dR}(\UR/\UnivR)$ --- this is due to Griffiths but a convenient
reference is \cite[Section 3]{AKR}.
For $k \geq 1$ define
\[ Z_k := \left\{ \frac{A \Omega}{\PR^k}\,|\,A \in \UnivR[x_0,\cdots,x_{n+1}] \mbox{ homogeneous degree } kd - n - 2\right\}. \]
Then $Z_k$ is the $R$-module of $(n+1)$-forms on $\pr^{n+1}_{\UnivR}$ defined on $\UR/\UnivR$ with
poles of order at most $k$ along $\XR/\UnivR$; that is,
\[Z_k = \Gamma(\pr^{n+1}_{\UnivR},\Omega^{n+1}_{\spr^{n+1}_{\UnivR}}(k (\XR/\UnivR))).\]
Note $Z_k \subset Z_{k+1}$ for $k \geq 1$.
Defining $Z := \cup_{k = 1}^\infty Z_k$ we have $Z = \Gamma(\UR/\UnivR,\Omega^{n+1}_{\UR/\UnivR})$ with
the natural filtration by pole order.

Let $b_1 = \{0\}$ and for $k \geq 2$ define $b_k$ to be the $\UnivR$-module generated
by the set
\[ \left\{ \frac{(\delta_i A) \Omega}{\PR^{k-1}} - (k-1) \frac{A (\delta_i P) \Omega}{\PR^{k}} \,|\,
A \in \UnivR[x_0,\cdots,x_{n+1}] \mbox{ homo. deg. } (k-1)d - n - 1\right\}\]
where $\delta_i := \frac{\partial }{\partial x_i}$.
Let $B_k$ be the $\UnivR$-module generated by $\cup_{j = 1}^k b_j$. Then
\[ B_k = d\Gamma(\pr^{n}_{\UnivR},\Omega^{n}_{\spr^{n+1}_{\UnivR}}((k-1) (\XR/\UnivR)))\]
where $d$ is derivation of $n$-forms.
Defining $B := \cup_{k = 1}^\infty B_k$ we have
$B = d\Gamma(\UR/\UnivR,\Omega^{n}_{\UR/\UnivR})$ filtered by pole order. Note that 
$B_k \subseteq B \cap Z_k$ but there is not an equality: it fails for singular
hypersurfaces, see \cite[Theorem A, (A2)]{Dimca} and the comments which follow. 
Certainly though $H^{n+1}_{dR}(\UR/\UnivR) = Z/B$.

Since we are working with the universal hypersurface, the quotient $Z/B$ though presumably
finitely generated is complicated. However, by inverting a sequence of polynomials in
$\UnivR$ one can give an explicit procedure for computing in this quotient based upon reduction of 
pole order. This is the Griffiths-Dwork method. It has been given a conceptual explanation by Dimca.
Filtration by pole order of $\Omega^\bullet_{\UR/\UnivR}$ allows one to define a spectral sequence with $E_\infty = 
H^{n+1}_{dR}(\UR/\UnivR)$. This spectral sequence has infinitely many non-zero $E_1^{s,t}$ terms 
for the universal hypersurface, but it degenerate at the $E_1$ term ($E_1 = E_\infty$) for smooth hypersurfaces (when
``$B_k = B \cap Z_k$'') \cite[Page 764]{Dimca}.
We now give the explicit description.

Let $\Delta^{(n+1)}_{n,d}$
be the ``Macaulay resultant'' of the partial derivatives of $P$ defined in \cite[Page 7, Chapter I.6]{Mac}.
Precisely, let it be Macaulay's 
determinant ``$D$'', taking ``$n$'' to be $n+2$ and ``$F_i$'' to be $\delta_i  \PR$.
Thus $\Delta_{n,d}^{(n+1)}$ is non-zero but vanishes on the coefficients of homogeneous polynomials
of degree $d$ which define singular hypersurfaces of dimension $n$ (and also possibly on the coefficients
of some other polynomials). 
Define $\UnivR^{(n+1)} := \UnivR[1/\Delta^{(n+1)}_{n,d}]$ and for any $\UnivR$-module
$\bullet$ let $\bullet^{(n+1)} := \bullet \otimes_{\UnivR} \UnivR^{(n+1)}$. 
Then we have
\[ Z_k^{(n+1)} = B_k^{(n+1)} + Z^{(n+1)}_{n+1} \mbox{ for }k \geq n+1.\]
This follows since after inverting $\Delta^{(n+1)}_{n,d}$ one may use linear algebra (via
the matrix whose determinant is the Macaulay resultant) to reduce any
$(n+1)$-form in $Z_k^{(n+1)}$ modulo exact forms (in $B^{(n+1)}_k$) to have pole order at most $n+1$. It
follows 
\[ Z^{(n+1)}/B^{(n+1)} \cong Z_{n+1}^{(n+1)}/(B^{(n+1)} \cap Z_{n+1}^{(n+1)})\]
and so $Z^{(n+1)}/B^{(n+1)}$ is finitely generated. Note that since certainly $B_{n+1}^{(n+1)} \subseteq
B^{(n+1)} \cap Z_{n+1}^{(n+1)}$ we have $Z^{(n+1)}/B^{(n+1)}$ is a quotient of 
$Z^{(n+1)}_{n+1}/B^{(n+1)}_{n+1}$; in fact, $B_{n+1}^{(n+1)} =
B^{(n+1)} \cap Z_{n+1}^{(n+1)}$ by \cite[Theorem A, (A2)]{Dimca} and the two are equal.

The quotient $Z^{(n+1)}/B^{(n+1)}$ is locally free, but it is not free in general without further localisation which
will depend upon a choice of basis. Now $\Basis_{\UR}$ is a basis generically since it is a basis
for the de Rham cohomology of the complement of the diagonal hypersurface 
$x_0^d + \cdots + x_{n+1}^d = 0$. (Over $\C$ the action of the $(n+2)$-fold product of the group of $d$th roots of unity
decomposes into characters via this basis.)
 Write $\Basis_{\UR,k} := \Basis_{\UR} \cap Z_k$, an intersection
as sets. One can now construct an explicit Macaulay-style matrix depending on $\Basis_{\UR,n+1}$ whose determinant $\Delta^{(n)}_{n,d}$ is such that defining $\UnivR^{(n)} := \UnivR^{(n+1)}[1/\Delta^{(n)}_{n,d}]$
one has
\[ Z^{(n)}/B^{(n)} \cong Z^{(n)}_{n+1}/B^{(n)}_{n+1} \cong \langle \Basis_{\UR,n+1} \rangle_{\UnivR^{(n)}} \oplus
\left( Z^{(n)}_{n}/B^{(n)}_{n} \right).\]
Here we are following the same convention for base extension of an $\UnivR$-module $\bullet$, that is,
$\bullet^{(n)} :=  \bullet \otimes_\UnivR \UnivR^{(n)}$. In words, every $(n+1)$-form with a pole
of order at most $n+1$ can be written modulo an exact form of pole order at most $n+1$ 
as an $\UnivR^{(n)}$-linear combination of the basis forms in $\Basis_{\UR,n+1}$ plus a form
of pole order at most $n$.

This continues inductively, defining $\Delta^{(k)}_{n,d}$ and $\UnivR^{(k)} := \UnivR^{(k+1)} [1/\Delta^{(k)}_{n,d}]$ for $k = n-1,\ldots,1$ at each step so that
\begin{equation}\label{Eqn-ZB}
Z^{(k)}/B^{(k)} \cong \langle \cup_{\ell = k+1}^{n+1}\Basis_{\UR,\ell} \rangle_{\UnivR^{(k)}} \oplus
\left( Z^{(k)}_{k}/B^{(k)}_{k} \right)
\end{equation}
and this isomorphism is effectively computable via linear algebra. Finally noting
$Z^{(1)}_{1}/B^{(1)}_{1} \cong \langle B_{\UR,1} \rangle_{\UnivR^{(1)}}$ one finds
\[ Z^{(1)}/B^{(1)} \cong \langle \Basis_{\UR} \rangle_{\UnivR^{(1)}}.\]
Finally define $\Delta_{n,d}:= \prod_{k = 1}^{n+1} \Delta_{n,d}^{(k)}$ so that
$\UnivR^{(1)} = \UnivR[1/\Delta_{n,d}]$, and let $\SR := \Spec(\UnivR^{(1)})$. Writing
``$\UR/\SR$'' to denote $\UR \otimes_\UnivR \UnivR^{(1)}/\UnivR^{(1)}$ one has Theorem \ref{Thm-GP}.

\begin{note}\label{Note-Pancratz}
The exact description of the Macaulay-style
matrices whose determinants are the polynomials $\Delta^{(k)}_{n,d}$, and the method
for using these matrices to make the isomorphisms above explicit is a little involved; see
\cite{Lfocm} for the details in a closely analogous situation, or \cite{SP} for full details
of this construction. The method has
been implemented in code by S. Pancratz and seems well-suited to
computing Picard-Fuchs systems for families of hypersurfaces; see \cite{SP} and the proof of
Theorem \ref{Thm-GMDwork}. Our computations of Picard-Fuchs systems for the examples in
Section \ref{Sec-Exs} actually use a slower implementation due to the author, which exploits
the Gr\"{o}bner basis routines for the rational function field $\Q(t)$ in the programming
language {\sc Magma}. 
\end{note}

\subsubsection{Computation of the Picard-Fuchs system}\label{Sec-CPFS}

Let $\F$ be a computable field of characteristic zero.
Let $\PF_t \in \F[t][x_0,x_1,\cdots,x_{n+1}]$ be a homogeneous polynomial of degree $d$ in the
variables $x_0,x_1,\ldots,x_{n+1}$ with coefficients in the polynomial ring $\F[t]$. Let
$\YF_t \subset \pr_{\F(t)}^{n+1}$ be the hypersurface defined by the equation $\PF_t = 0$. 
We shall assume that $\YF_t$ is smooth and in general position, see Definition \ref{Def-GenPos}.
Denote by $\Delta_{n,d}(\PF_t) \in \F[t]$ the non-zero polynomial obtained by specialising $\Delta_{n,d}$
at the coefficients of $\PF_t$.
Let $\Delta(t) \in \F[t]$ be any non-zero polynomial which contains $\Delta_{n,d}(\PF_t)$ as
a factor. Define $\AF := \F[t,1/\Delta(t)]$ and
$\SF := \Spec(\AF)$. Let $\XF \subset \pr^{n+1}_{\SF}$ be the scheme defined by
$\PF_t = 0$ and $\UF$ be its affine complement. 

\begin{note}
One is really just interested in the case $\Delta(t) := \Delta_{n,d}(\PF_t)$ but it is harmless to
excise some additional fibres and allowing this eases the statements of some results.
\end{note}

\begin{corollary}
The set of cohomology classes
$\{[b]\,|\,b \in \Basis_{\UR}\}$ is an $\AF$-basis for $H^{n+1}_{dR}(\UF/\SF)$.
\end{corollary}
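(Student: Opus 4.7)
My plan is to deduce this corollary from Theorem \ref{Thm-GP} by a straightforward base-change argument, using the fact that the hypothesis on $\Delta(t)$ ensures all the ``denominators'' needed by the Griffiths--Dwork reduction are invertible after specialisation.

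First, I would observe that the coefficients of $\PF_t$ with respect to the monomials $x^w$ $(|w|=d)$ determine a ring homomorphism $\UnivR = \Z[a_w \mid |w|=d] \to \F[t]$, $a_w \mapsto (\text{coefficient of } x^w \text{ in } \PF_t)$. Since the polynomial $\Delta_{n,d} \in \UnivR$ of Theorem \ref{Thm-GP} specialises under this map to $\Delta_{n,d}(\PF_t)$, and since $\Delta_{n,d}(\PF_t)$ divides $\Delta(t)$ in $\F[t]$, the image of $\Delta_{n,d}$ is a unit in $\AF = \F[t,1/\Delta(t)]$. Hence the ring map extends uniquely to $\UnivR^{(1)} \to \AF$, giving rise to a morphism $\SF \to \SR$ such that $\UF = \UR \times_{\SR} \SF$ as schemes over $\SF$.

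Next I would note that the relative algebraic de Rham cohomology of a smooth affine morphism commutes with flat base change: indeed, for the complement $\UF$ of a smooth hypersurface, $H^{n+1}_{dR}(\UF/\SF)$ is computed as the cohomology of $\Gamma(\UF,\Omega^\bullet_{\UF/\SF})$, which is obtained from $\Gamma(\UR,\Omega^\bullet_{\UR/\SR})$ by applying $\cdot \otimes_{\UnivR^{(1)}} \AF$. Since $\AF$ is flat over $\UnivR^{(1)}$, tensor product commutes with taking cohomology of this complex, so
\[ H^{n+1}_{dR}(\UF/\SF) \cong H^{n+1}_{dR}(\UR/\SR) \otimes_{\UnivR^{(1)}} \AF.\]
Under this isomorphism, each class $[b] \in H^{n+1}_{dR}(\UR/\SR)$ for $b \in \Basis_\UR$ pulls back to the class of the same differential form (with its coefficients now specialised via $a_w \mapsto (\text{coefficient of }x^w\text{ in }\PF_t)$) in $H^{n+1}_{dR}(\UF/\SF)$, and this specialised form is exactly the element of $\Basis_\UR$ in the current notation.

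Finally, since $\Basis_\UR$ is an $\UnivR^{(1)}$-basis of $H^{n+1}_{dR}(\UR/\SR)$ by Theorem \ref{Thm-GP}, the image under $\cdot \otimes_{\UnivR^{(1)}} \AF$ is an $\AF$-basis of $H^{n+1}_{dR}(\UF/\SF)$. I anticipate no serious obstacle here: the only point requiring care is that the isomorphism (\ref{Eqn-ZB}) in the proof of Theorem \ref{Thm-GP} is realised by explicit linear-algebra reductions (Macaulay-style matrices) whose pivots are exactly the factors of $\Delta_{n,d}$, and the assumption $\Delta_{n,d}(\PF_t) \mid \Delta(t)$ guarantees that these same reductions remain valid verbatim after specialisation to $\AF$; thus no new denominators are introduced, and $\Basis_\UR$ remains a genuine basis over $\AF$.
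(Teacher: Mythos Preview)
Your overall strategy is correct, and the final paragraph in which you observe that the explicit Macaulay-style reductions specialise verbatim is a valid self-contained argument. However, the main body of your proposal rests on the assertion that $\AF$ is flat over $\UnivR^{(1)}$, and this is false. The map $\UnivR^{(1)} \to \AF$ sends each variable $a_w$ to a polynomial in $t$; geometrically this corresponds to embedding the curve $\SF$ into the high-dimensional parameter space $\SR$, and such a map is essentially never flat. Concretely, if some coefficient $a_w$ of $\PF_t$ happens to be zero, then the injective map $(a_w) \hookrightarrow \UnivR^{(1)}$ tensors to the zero map $(a_w)\otimes_{\UnivR^{(1)}}\AF \to \AF$ with non-zero source, so $\operatorname{Tor}_1^{\UnivR^{(1)}}(\UnivR^{(1)}/(a_w),\AF)\ne 0$.

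The paper avoids this trap by observing that flatness is unnecessary here: since $\Omega^{n+2}_{\UR/\SR}=0$, the group $H^{n+1}_{dR}(\UR/\SR)$ is not merely a cohomology group but a cokernel $Z/B$. Right exactness of $\cdot\otimes_{\UnivR^{(1)}}\AF$ then gives $(Z/B)\otimes_{\UnivR^{(1)}}\AF \cong (Z\otimes\AF)/\operatorname{im}(B\otimes\AF)$, and one checks directly that this last quotient is the analogous presentation of $H^{n+1}_{dR}(\UF/\SF)$. So the fix is simply to replace ``flat'' by ``right exact'' and to remark that you are computing the top cohomology of the complex. With that correction your argument coincides with the paper's; alternatively, your closing paragraph already supplies an independent and perfectly good proof.
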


Here the $P$ in the definition of $\Basis_{\UR}$ should be specialised to $\PF_t$.

\begin{proof}
By Theorem \ref{Thm-GP}, noting
$H^{n+1}_{dR}(\UR/\UnivR)$ is a quotient $Z/B$ and tensor product of rings
$\bullet \otimes_{\UnivR^{(1)}} \AF$ is right exact.
\end{proof}

Letting $\HH$ denote the primitive part of $H^n_{dR}(\XF/\SF)$ we deduce the following. 

\begin{corollary}
The Dwork basis $\Basis_\XR$ is an $\AF$-basis for $\HH$.
\end{corollary}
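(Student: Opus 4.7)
The plan is to deduce this immediately from the preceding corollary combined with the properties of the residue map that were recorded earlier. Recall that just before the statement of Theorem \ref{Thm-GP} the residue map
\[ {\rm Res}: H^{n+1}_{dR}(\UR/\SR) \rightarrow H^n_{dR}(\XR/\SR) \]
was introduced, together with the two facts that (i) it is injective, and (ii) its image is by definition the primitive middle-dimensional cohomology $\HH_\SR$. The Dwork basis $\Basis_\XR$ was defined as $\{{\rm Res}([b]) : b \in \Basis_\UR\}$.

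First I would base change the setup over $\SR = \Spec(\UnivR^{(1)})$ to the setup over $\SF = \Spec(\AF)$ via the specialisation $\UnivR^{(1)} \rightarrow \AF$ which sends each coefficient variable $a_w$ to the corresponding coefficient of $\PF_t$; this is well-defined because by assumption $\YF_t$ is smooth and in general position, so $\Delta_{n,d}$ specialises to a unit multiple of a factor of $\Delta(t)$. The construction of the residue map and the identification of its image with primitive cohomology are compatible with this base change (the residue map is algebraically defined, and formation of primitive cohomology commutes with flat base change on the base). Hence one obtains an injection
\[ {\rm Res}: H^{n+1}_{dR}(\UF/\SF) \rightarrow H^n_{dR}(\XF/\SF) \]
whose image is exactly $\HH$.

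Next I would invoke the corollary immediately preceding the statement, which says $\{[b] : b \in \Basis_\UR\}$ is an $\AF$-basis for $H^{n+1}_{dR}(\UF/\SF)$. Applying the injective $\AF$-linear map ${\rm Res}$ to an $\AF$-basis sends it to an $\AF$-linearly independent set, and since ${\rm Res}$ is surjective onto $\HH$, the image is also an $\AF$-spanning set. Therefore $\Basis_\XR = \{{\rm Res}([b]) : b \in \Basis_\UR\}$ is an $\AF$-basis of $\HH$.

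There is essentially no obstacle here: the only thing one needs to verify carefully is that the residue map and the primitive cohomology behave well under the base change $\UnivR^{(1)} \to \AF$, which is standard since $\AF$ is flat (in fact a localisation of a polynomial ring) over the image of $\UnivR^{(1)}$ and the constructions in question are algebraic and functorial. The rest is a one-line consequence of the injectivity of ${\rm Res}$ and the definition of $\HH$ as its image.
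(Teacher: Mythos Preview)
Your proposal is correct and matches the paper's (implicit) approach: the paper simply writes ``Letting $\HH$ denote the primitive part of $H^n_{dR}(\XF/\SF)$ we deduce the following'' and states the corollary without further argument, so your proof supplies exactly the expected details --- base change the residue map, use its injectivity and the definition of $\HH$ as its image, and apply the preceding corollary. One minor inaccuracy: the residue map is introduced just \emph{after} Theorem~\ref{Thm-GP}, not before it; this does not affect the argument.
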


We can now present our theorem on Picard-Fuchs systems for projective hypersurfaces.

\begin{theorem}\label{Thm-GMDwork}
The matrix $N(t)$ for the Gauss-Manin connection $\nabla$ restricted to the primitive middle-dimensional
cohomology $\HH$ of $\XF \rightarrow \SF$ with respect to the Dwork basis $\Basis_\XR$ is effectively computable.
\end{theorem}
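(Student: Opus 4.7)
The plan is to reduce the problem to the explicit Griffiths--Dwork pole reduction of Section \ref{Sec-Red}, specialized from the universal family to the one-parameter family $\PF_t$. Since the algebraic residue map $\mathrm{Res} \colon H^{n+1}_{dR}(\UF/\SF) \to H^n_{dR}(\XF/\SF)$ is $\AF$-linear, has image $\HH$, and intertwines with the Gauss--Manin connection on source and target (both being inherited from the same de Rham differential via the Koszul-filtration construction of $\nabla$), the matrix of $\nabla$ on $\HH$ with respect to $\Basis_\XR$ coincides with the matrix of $\nabla$ on $H^{n+1}_{dR}(\UF/\SF)$ with respect to $\Basis_\UR$. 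Thus it suffices to compute $\nabla([b])$ for each $b \in \Basis_\UR$ as an $\AF$-linear combination of the classes in $\Basis_\UR$.

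For $b = x^w \Omega / \PF_t^k \in \Basis_\UR$, differentiating under the pole yields
\[ \nabla(b) \;=\; \frac{d}{dt}\!\left(\frac{x^w \Omega}{\PF_t^k}\right) \;=\; -k\, \frac{x^w \, (\partial \PF_t/\partial t)\, \Omega}{\PF_t^{k+1}}, \]
a global $(n+1)$-form on $\UF$ lying in $Z_{k+1}$ (in the notation of Section \ref{Sec-Red}) with polynomial numerator of degree $(k+1)d - n - 2$ whose coefficients lie in $\F[t]$. Since $k \leq n+1$ for elements of $\Basis_\UR$, the derivative has pole order at most $n+2$, exactly the range handled by the explicit reduction procedure.

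I now specialize the universal Griffiths--Dwork reduction, substituting the coefficients of $\PF_t$ for the universal variables $a_w$ throughout. By hypothesis $\Delta_{n,d}(\PF_t)$ divides $\Delta(t)$, so each factor $\Delta_{n,d}^{(k)}(\PF_t)$ for $k = 1, \dots, n+1$ is a unit in $\AF$. Hence the chain of effective isomorphisms culminating in (\ref{Eqn-ZB}), carried out by linear algebra against the localized Macaulay-style matrices, descends to an algorithm over $\AF$ which writes any element of $Z$ modulo $B$ uniquely as an $\AF$-combination of $\Basis_\UR$. Applying this reduction to $\nabla(b)$ yields the column of $N(t)$ indexed by $b$; doing this for every $b \in \Basis_\UR$ yields the full matrix.

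The one genuine care-point --- and the main obstacle worth checking --- is that specialization introduces no new denominators in $t$ beyond those already allowed in $\AF$. This is precisely guaranteed by the divisibility condition $\Delta_{n,d}(\PF_t) \mid \Delta(t)$: every linear-algebraic step in the pole reduction requires inverting only quantities among the $\Delta_{n,d}^{(k)}(\PF_t)$, all of which divide $\Delta(t)$. Consequently the entries of $N(t)$ lie in $\AF$, and the entire procedure --- derivative, specialization, reduction --- is explicit and effective over the computable field $\F$.
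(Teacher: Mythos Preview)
Your proof is correct and follows essentially the same approach as the paper: reduce via the residue isomorphism to the complement $\UF$, differentiate each basis form by the explicit formula $[x^w\Omega/\PF_t^k] \mapsto [-k\,x^w(\partial\PF_t/\partial t)\Omega/\PF_t^{k+1}]$, and then apply the Griffiths--Dwork pole reduction of Section~\ref{Sec-Red} specialised at the coefficients of $\PF_t$. Your explicit check that specialisation introduces no denominators beyond $\Delta(t)$ is a welcome clarification, but the overall argument is the same.
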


\begin{proof}
Equivalently, we compute the Gauss-Manin connection on the basis of $H^{n+1}_{dR}(\UF/\SF)$
given by classes of forms in $\Basis_\UR$.
Partition the basis of differential forms $\Basis_\UR$ via pole order $k$ along $\XF$ as before,
\[ \Basis_\UR = \bigsqcup_{k = 1}^{n+1} \Basis_{\UR,k}.\]
For $0 \leq k \leq n$, let $\HH^{k}$ denote the $\AF$-module spanned by the image of $\sqcup_{j = 1}^{k+1} \Basis_\UR^j$ in
the primitive middle-dimensional cohomology $\HH$ of $\XF \rightarrow \SF$. Then 
$\HH^\bullet$ is the Hodge filtration on $\HH$. The Gauss-Manin connection $\nabla = \nabla_{\frac{d}{dt}}$ is
just differentiation of cohomology classes w.r.t. the parameter followed by reduction in cohomology.
Computing $\frac{d}{dt}$ on a basis element is straightforward:
\begin{equation}\label{Eqn-ktokplus1}
 \left[ \frac{x^w \Omega}{P_t^k} \right] \stackrel{\frac{d}{dt}}{\longmapsto} \left[ \frac{-kx^w \frac{dP_t}{dt}\Omega}{P_t^{k+1}} \right],
\end{equation}
where square brackets indicates the cohomology class of a differential form.
Reduction in cohomology is performed via linear algebra as in Section \ref{Sec-Red}, by specialising all of the matrices
$\Delta^{(k)}_{n,d}$ at the coefficients in $\F[t]$ of the polynomial $\PF_t$.
\end{proof}

This explicit description of the computation of the Picard-Fuchs matrix $N(t)$ via linear algebra shows that
it has a particular block form. Specifically, for $1 \leq k \leq n+1$ let $\Delta^{(k)}_{n,d}(\PF_t)$ denote the specialisation of the polynomial $\Delta^{(k)}_{n,d}$ from Section \ref{Sec-Red} 
at the coefficients of $\PF_t$. Define
$\Delta^{(0)}_{n,d}(\PF_t) := 1$.

\begin{corollary}\label{Cor-PFstructure}
Let $N(t)$ be the Picard-Fuchs matrix w.r.t. the Dwork basis, and
$N_{k,\ell}\,(1 \leq k,\ell \leq n+1)$ the block of entries which gives for $b \in \Basis_{\UR,\ell}$ the coefficients 
in $\nabla(b)$ of the elements in $\Basis_{\UR,k}$. Then for $k - 1 \leq \ell$ the matrix 
\[ \left( \prod_{j = k -1}^{\ell} \Delta^{(j)}_{n,d}(\PF_t) \right) N_{k,\ell}\]
has entries in $\F[t]$.  
For $k > \ell + 1$ we have $N_{k,\ell} = 0$ (Griffiths's  transversality). 
\end{corollary}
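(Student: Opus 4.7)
The plan is to trace the Griffiths-Dwork reduction of Section \ref{Sec-Red} applied to $\nabla(b)$ for a basis element $b \in \Basis_{\UR, \ell}$, keeping track of both the pole order and the denominators that accrue at each step. First, I would apply \eqref{Eqn-ktokplus1}: differentiating $b = x^w \Omega / \PF_t^{\ell}$ with respect to $t$ yields a form whose numerator lies in $\F[t][x_0,\ldots,x_{n+1}]$ (since $d\PF_t/dt$ does) and whose pole order along $\XF$ is exactly $\ell + 1$. Thus the representative of $\nabla(b)$ before any reduction lies in the $\F[t]$-module of forms in $Z_{\ell+1}$.

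The Griffiths transversality assertion is then immediate, since the reduction procedure of Section \ref{Sec-Red} only lowers pole order, so the image in $H^{n+1}_{dR}(\UF/\SF)$ lies in the span of $\bigsqcup_{j \leq \ell + 1} \Basis_{\UR, j}$, giving $N_{k, \ell} = 0$ for $k > \ell + 1$. For the denominator bound, the key observation is that each reduction step at level $j$ (for $j \in \{1, \ldots, n+1\}$) uses a Macaulay-style matrix whose entries, by their construction in Section \ref{Sec-Red}, are polynomial in the coefficients of $\PF_t$, and whose determinant is $\Delta^{(j)}_{n,d}(\PF_t)$. By Cramer's rule, inverting such a matrix over $\F[t]$ produces a solution whose denominator divides $\Delta^{(j)}_{n,d}(\PF_t)$ while the numerator remains in $\F[t]$.

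Starting from pole order $\ell + 1$ and extracting the $\Basis_{\UR, k}$ component requires applying the reductions at levels $j = \ell, \ell - 1, \ldots, k - 1$ when $\ell \leq n$, or $j = n+1, n, \ldots, k - 1$ when $\ell = n + 1$ (in the latter case, the initial reduction via $\Delta^{(n+1)}_{n,d}(\PF_t)$ first brings pole order $n+2$ down to $n+1$). In both cases the accumulated denominator divides $\prod_{j = k-1}^{\ell} \Delta^{(j)}_{n,d}(\PF_t)$, which therefore suffices to clear all denominators in $N_{k, \ell}$. The only subtle point, which I expect to be the main item requiring care, is the confirmation that the Macaulay matrices at each level have entries in $\F[t]$ rather than in $\F(t)$, so that Cramer's rule produces denominators strictly bounded by the stated product; this is inherent in their definition in Section \ref{Sec-Red} as matrices assembled from the coefficients of $\PF_t$ and its partial derivatives, after which the corollary is a matter of bookkeeping.
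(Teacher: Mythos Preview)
Your proposal is correct and follows essentially the same approach as the paper's proof: apply (\ref{Eqn-ktokplus1}) to obtain a representative of pole order $\ell+1$, then trace the Griffiths--Dwork reduction of Section~\ref{Sec-Red}, observing that the step passing from pole order $j+1$ to $j$ (while splitting off the $\Basis_{\UR,j+1}$ component) introduces exactly a factor of $\Delta^{(j)}_{n,d}(\PF_t)$ in the denominator. Your explicit invocation of Cramer's rule and the separate treatment of $\ell = n+1$ are slightly more detailed than the paper's account, but the argument is the same.
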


\begin{proof}
Let $b \in \Basis_{\UR,\ell}$. Then by (\ref{Eqn-ktokplus1}), $\nabla(b)$ can be represented by a differential form
with pole order $\ell + 1$ along $\XF$. Now use the explicit pole reduction method at the end of
Section \ref{Sec-Red}, specialising $\PR$ to $\PF_t$ and tensoring $\UnivR$-modules by $\bullet \otimes_\UnivR \F[t]$,
to write this as an $\F(t)$-linear combination of the basis $\Basis_{\UR}$, dividing by
$\Delta^{(j)}_{n,d}(\PF_t)\,(1 \leq j \leq \ell)$ when reducing pole orders from $j + 1$ to $j$ modulo exact forms. 
Precisely, when one has reduced modulo exact forms to a get pole of order $k$ plus an $\F[t,1/\prod_{j = k}^{\ell} \Delta^{(j)}_{n,d}(\PF_t)]$-linear combination of $\cup_{j = k+1}^{\ell} \Basis_{\UR,j}$, 
one needs to divide by $\Delta^{(k-1)}_{n,d}(\PF_t)$ to
obtain an $\F[t,1/\prod_{j = k-1}^{\ell} \Delta^{(j)}_{n,d}(\PF_t)]$-linear combination of $\cup_{j = k}^{\ell}\Basis_{\UR,j}$ modulo an exact form and a form of pole order $k - 1$, cf. (\ref{Eqn-ZB}).
\end{proof}

In particular, provided $\Delta_{n,d}(\PF_t)$ is squarefree then the Picard-Fuchs matrix has simple poles at the
finite points. This is true generically if $\Delta_{n,d}$ itself is squarefree, which in turn follows if one finds one
specialisation of maximum degree which is squarefree; that is, one suitable family.
The author expects that a Lefschetz pencil would work, but he has not proved this.

\subsubsection{Semistable reduction for hypersurfaces}

We come to the main theorem of Section \ref{Sec-Hypersurfaces}. Recall that for $e$ a positive integer
and $\SF = \Spec(\F[t,1/\Delta(t)])$, we define $\SF_e := \Spec(\F[s,1/\Delta(s^e)])$ with
$\SF_e \rightarrow \SF$ defined by $t \mapsto s^e$ and write $\XF_e := \XF \times_{\SF} \SF_e$.

\begin{theorem}\label{Thm-SSRhyper}
There exists an explicit deterministic algorithm with the following input and output. The input to the algorithm is a 
homogeneous polynomial
$\PF_t  \in \F[t][x_0,x_1,\dots,x_n]$ and a polynomial $\Delta(t) \in \F[t]$
such that writing $\SF := \Spec(\F[t,1/\Delta(t)])$ the fibres in the family $\XF \rightarrow \SF$ defined by
$\PF_t$ are smooth and in general position. The output is 
 a positive integer $e$, a basis
$\Basis_e$ for the primitive middle-dimensional cohomology of $\XF_e \rightarrow \SF_e$ and
a matrix $N(t)$ for the Gauss-Manin connection $\nabla_e$ on this basis which has only a simple
pole at $s = 0$ with nilpotent residue matrix.
\end{theorem}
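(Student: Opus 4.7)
The plan is to combine the two main results already established in this section: the effective computation of the Picard-Fuchs system for hypersurfaces (Theorem \ref{Thm-GMDwork}) and the effective semistable reduction algorithm in algebraic de Rham cohomology (Theorem \ref{Thm-SSR}).

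First, I would invoke Theorem \ref{Thm-GMDwork} applied to the family $\XF \rightarrow \SF$ defined by $\PF_t$. Since by hypothesis every fibre is smooth and in general position, Corollary \ref{Cor-DworkBasis} guarantees that the Dwork basis $\Basis_{\XR}$ (specialised at the coefficients of $\PF_t$) is an $\AF$-basis for the primitive middle-dimensional cohomology $\HH$ of $\XF \rightarrow \SF$. Theorem \ref{Thm-GMDwork} then produces the matrix $N(t)$ for the Gauss-Manin connection $\nabla$ with respect to $\Basis_{\XR}$ by an explicit linear-algebra reduction (differentiating via (\ref{Eqn-ktokplus1}) and reducing pole order using the Macaulay-style matrices whose determinants are the $\Delta^{(k)}_{n,d}(\PF_t)$). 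The output is the effectively computed matrix $N(t)$ with entries in $\AF$, whose pole locus is controlled by Corollary \ref{Cor-PFstructure}.

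Second, I would feed $N(t)$ into the algorithm of Theorem \ref{Thm-SSR}. That theorem does not require any special origin of the matrix $N(t)$ beyond its being the connection matrix for a smooth morphism over a computable field; it produces a positive integer $e$ and a matrix $H(s) \in \GL_r(\F[s,s^{-1}])$ such that with respect to the pulled-back basis $(\Basis_{\XR})_e$ transformed by $H(s)$, the matrix
\[ (es^{e-1} N(s^e))_{[H(s)]} = H\,e s^{e-1} N(s^e)\, H^{-1} + \frac{dH}{ds} H^{-1}\]
has only a simple pole at $s = 0$ with nilpotent residue matrix. The required basis $\Basis_e$ is then $((\Basis_{\XR})_e)_{[H(s)]}$, obtained by pulling back the Dwork basis along $\SF_e \rightarrow \SF$ and changing basis by $H(s)$; since $H(s) \in \GL_r(\F[s,s^{-1}]) \subseteq \GL_r(\AF_e)$, this is indeed a global basis for $\HH \otimes_\AF \AF_e$, which is the primitive middle-dimensional cohomology of $\XF_e \rightarrow \SF_e$ by flat base change.

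The main obstacle, if there is one, is really just bookkeeping: checking that the base change from $\SF$ to $\SF_e$ is compatible with the Dwork description of primitive cohomology and that the $\AF_e$-module structure is preserved by the factor $e s^{e-1}$ appearing from $dt = e s^{e-1}\, ds$. Since $\AF_e = \F[s, 1/\Delta(s^e)]$ is a flat $\AF$-module and since $H(s)$ is invertible over $\F[s, s^{-1}] \subseteq \AF_e$, no further poles are introduced on $\SF_e$, so the resulting matrix has only a simple pole at $s = 0$ with nilpotent residue matrix and no worse poles elsewhere than those already forced by $\Delta(s^e)$. This yields the desired output.
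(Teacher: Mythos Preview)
Your proposal is correct and follows essentially the same approach as the paper: compute the connection matrix on the Dwork basis via Theorem~\ref{Thm-GMDwork}, then apply the effective semistable reduction algorithm of Theorem~\ref{Thm-SSR}. The paper's proof is terser (it just notes the trivial case $\Delta(0)\neq 0$ separately), but your added bookkeeping about flat base change and $H(s)\in\GL_r(\F[s,s^{-1}])$ introducing no new poles is exactly the content hidden in the phrase ``change of basis defined over the whole of $\SF_e$.''
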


\begin{proof}
By Theorem \ref{Thm-GMDwork} one can compute the matrix for the Gauss-Manin connection
$\nabla$ on the primitive middle-dimensional cohomology of $\XF \rightarrow \SF$. If
$\Delta(0) \ne 0$ then we are done. Otherwise, Theorem \ref{Thm-SSR} performs the necessary
pull-back $\SF_e \rightarrow \SF$ and change of basis defined over the whole of $\SF_e$. 
\end{proof}

\section{Degenerations in positive characteristic}\label{Sec-Charp}

Let $\fq$ be the finite field with $q$ elements of characteristic $p$, $\K$ be the
unique unramified extension of the field of $p$-adic numbers $\Qp$ of degree $\log_p(q)$, and
$\W$ be the ring of integers of $\K$. Let $\bar{\K}$ denote an algebraic closure of $\K$.

\subsection{Relative rigid cohomology}\label{Sec-RRC}

Let $\XW \rightarrow \SW$ be a 
morphism of $\W$-schemes where $\SW$ is an open subscheme of $\Aff_\W^1$.

\begin{definition}\label{Def-SP}
We shall say that ``Assumption (SP) holds'' if either $\XW \rightarrow \SW$ is smooth and proper,
or it is the restriction of a smooth and proper morphism to the complement of a normal crossings divisor with
smooth components over $\SW$.
\end{definition}

We shall assume throughout Section \ref{Sec-Charp} that Assumption (SP) holds.
Write $\SW := \Spec(\AW)$ where $\AW = \W[t,1/\Delta(t)]$ for some polynomial $\Delta(t) \in \W[t]$.
We assume that $p$ does not divide $\Delta(t)$.
Let
$\XK \rightarrow \SK$ and $\Xp \rightarrow \Sp$ denote maps on generic and special fibres, respectively.
Let $\AK := \AW \otimes_\W \K$ and
define $\AdagK$ to be the weak completion of $\AW$ tensored by $\K$. Precisely, denote the completion of
$\AW$ w.r.t. the Gaussian norm \cite[Page 10]{DGS} by $\hat{\AW}$. Then $\AdagK$ consists of series in $\hat{\AW} \otimes_\W \K$ which converge on
$\pr^1_{\bar{K}}$ with open disks of some unspecified radius less than one around the roots of $\Delta(t)$ and
infinity removed. Thus
$\AdagK$ has a $p$-adic valuation $\ord_p(\cdot)$, and one may extend this to matrix rings over $\AdagK$
in the obvious way. Define
the Frobenius $\sigma: \AdagK \rightarrow \AdagK$ by $\sigma: t \mapsto t^p$ and let
it act on coefficients by the usual Frobenius automorphism of $\K$.

Let $H^m_{rig}(\Xp/\Sp)$ denote the $m$th relative rigid cohomology of $\Xp \rightarrow \Sp$.
(We refer to \cite[Section 2]{Gerk}, \cite[Section 3]{L1} and \cite{LeStum}
for details and references on the construction of relative rigid cohomology.)
This is a locally
free $\AdagK$-module of finite rank equipped with a connection $\nabla_{rig}$ and $\sigma$-linear 
Frobenius $\Fr$ which commute, in the following sense. By Assumption (SP) we have the comparison 
theorem \cite[Theorem 3.1]{L1}
\begin{equation}\label{assumption}
H^m_{rig}(\Xp/\Sp) \cong H^m_{dR}(\XK/\SK) \otimes_{\AK} \AdagK.
\end{equation}
Since $H^m_{dR}(\XK/\SK)$ is a locally free module over a principal ideal domain it is free.  
Thus
$H^m_{rig}(\Xp/\Sp)$ is actually a free $\AdagK$-module.
Let $\Basis$ be a basis for $H^m_{dR}(\XK/\SK)$
and $N(t)$ the matrix for the Gauss-Manin connection $\nabla$ w.r.t. $\Basis$. Write
$\Basis_{rig} := \{b \otimes 1_{\AdagK}\}$ for the associated basis of $H^m_{rig}(\Xp/\Sp)$.
Then by the comparison theorem the matrix for $\nabla_{rig}$ w.r.t. $\Basis_{rig}$ is also $N(t)$.
Let $F(t)$ denote the matrix for Frobenius $\Fr$  w.r.t. $\Basis_{rig}$. Thus
$F(t)$ has entries in $\AdagK$. The matrix $F(t)$ is invertible and
\begin{equation}\label{Eqn-FN}
\frac{dF}{dt} + N(t)F(t) = F(t) \sigma(N(t))p t^{p-1}.
\end{equation}
This equation exactly describes the ``commutativity'' of $\nabla_{rig}$ and $\Fr$.

Denote by $\Robba$ the Robba ring around $t = 0$ \cite[Chapter 15.1]{KKbook}. 
That is, the ring of two-way infinite Laurent series with coefficients
in $\K$ which converge on some open annulus of outer radius one and unspecified inner radius $r < 1$ around the origin.
From the definition of $\AdagK$ one sees there is an embedding $\AdagK \hookrightarrow \Robba$ via local expansions. 
Letting $\K((t))$ denote the usual ring
of one-way infinite Laurent series one may embed $\AK \hookrightarrow \K((t))$. Thus the entries of $N(t)$ and $F(t)$ may be
expanded locally around $t = 0$ to give elements in $\K((t))$ and $\Robba$ respectively.

\subsection{Limiting Frobenius structures}

\subsubsection{A surprising lemma}

Suppose that $\Delta(0) = 0$: then when one embeds entries in the matrix $F(t)$ into the Robba ring $\Robba$ one
expects infinitely many non-zero negative power terms in their Laurent series expansions. The following result is surprising
and rather deep.
 
\begin{lemma}\label{Lem-Surprise}
Let $N(t) = \sum_{i = -1}^\infty N_i t^i$ be an $r \times r$ matrix such that the entries of $tN$ are power series
over $\K$ convergent in the open unit disk, and $N_{-1}$ is a nilpotent matrix. Let $F(t) = \sum_{i = -\infty}^{\infty}
F_i t^i$ be an $r \times r$ matrix whose entries are (two-way infinite) Laurent series over $\K$ convergent
on some open annulus with outer radius $1$. Suppose $N$ and $F$ satisfy (\ref{Eqn-FN}). Then
$F_i = 0$ for $i < 0$, so $F(t)$ converges on the whole open unit disk.
\end{lemma}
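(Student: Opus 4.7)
I would prove the lemma by building a formal fundamental matrix of solutions to $d/dt + N$ near $t=0$ and using (\ref{Eqn-FN}) to factor $F$ canonically as a product of formal power series in $t$. The nilpotency of $N_{-1}$ is the crucial input: it guarantees that the formal solution has no fractional exponents and only polynomial (rather than transcendental) dependence on $\log t$, which is what ultimately enables the log contributions to cancel in the Frobenius equation.

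\textbf{Formal solution and Frobenius intertwining.} Because $N$ has at worst a simple pole with nilpotent residue $N_{-1}$, classical Fuchs theory provides a formal fundamental matrix
\[ U(t) \;=\; Y(t) \cdot t^{-N_{-1}} \;\in\; GL_r(K((t))[\log t]), \]
with $Y(t) \in I + tM_r(K[[t]])$ and $t^{-N_{-1}} := \exp(-N_{-1}\log t)$ a matrix polynomial in $\log t$ of degree less than $r$. The recursion for the Taylor coefficients of $Y$ is governed by the indicial operator $X \mapsto [X, N_{-1}] - kX$ on $M_r(K)$, which has single eigenvalue $-k$ and so is invertible for $k \geq 1$; thus $Y$ exists purely as a formal power series. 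Extending $\sigma$ via $\sigma(\log t) = p\log t$, a direct computation from (\ref{Eqn-FN}) and $U' = -NU$ gives $(\sigma U)' = -pt^{p-1}\sigma(N)\sigma(U)$ and hence $(F\sigma U)' = -N(F\sigma U)$, so $F\sigma U$ is horizontal. Since the kernel of $d/dt$ on $K((t))[\log t]$ is $K$, horizontal sections form a free rank-$r$ $K$-module spanned by the columns of $U$, and so there is a constant matrix $C \in M_r(K)$ with
\[ F \cdot \sigma(Y) \cdot t^{-p\sigma(N_{-1})} \;=\; Y \cdot t^{-N_{-1}} \cdot C. \]

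\textbf{Elimination of logarithms.} Comparing coefficients of each power of $L := \log t$ on both sides in $M_r(K((t)))[L]$ and using that $F$ is free of $L$, the $L^0$-coefficient yields
\[ F \;=\; Y \cdot C \cdot \sigma(Y)^{-1}, \]
while the $L^1$-coefficient forces the intertwining relation $N_{-1}C = p\,C\,\sigma(N_{-1})$, from which all higher log coefficients then match by induction (since $N_{-1}^k C = p^k C\,\sigma(N_{-1})^k$). Because $Y$ and $\sigma(Y)$ both lie in $I + tM_r(K[[t]])$ and are therefore invertible formal power series in $t$, the right-hand side is a formal power series in $t$; comparing with the formal Laurent expansion at $t=0$ of the given $F$, we conclude $F_i = 0$ for all $i<0$, as desired (and the positive part, convergent on the original annulus, then automatically extends to the open unit disk).

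\textbf{Main obstacle.} The principal subtlety is the careful formal manipulation with the symbol $\log t$: one must verify that $U$ is genuinely a fundamental matrix in the enlarged ring $K((t))[\log t]$ and that the vanishing of each logarithmic coefficient on both sides of the intertwining identity is actually forced rather than merely consistent. As a warm-up, a direct coefficient-by-coefficient attack on (\ref{Eqn-FN}) yields the leading-order Sylvester relation $(N_{-1} - mI)F_{-m} = pF_{-m}\sigma(N_{-1})$ and dispatches the case of a pole of finite order at $t=0$ immediately via Sylvester--Roth (the spectra $\{-m\}$ and $\{0\}$ being disjoint for $m \geq 1$); the formal-solution approach above is what is needed to rule out a possible essential singularity at $t=0$.
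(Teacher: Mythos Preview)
Your formal-solution strategy is natural, but there is a genuine gap at the key step. You build the Fuchsian solution $Y(t)\in I+tM_r(K[[t]])$ as a \emph{formal} power series and then write the identity $F\cdot\sigma(Y)\cdot t^{-p\sigma(N_{-1})}=Y\cdot t^{-N_{-1}}\cdot C$ ``in $M_r(K((t)))[L]$''. But $F$ is not in $K((t))$: it is a two-way infinite Laurent series lying in the Robba ring, and the space of such series is not closed under multiplication by arbitrary formal power series. The product $F\cdot\sigma(Y)$ has $t^k$-coefficient $\sum_{j\ge 0}F_{k-j}\,\sigma(Y)_j$, an infinite sum with no reason to converge $p$-adically. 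So the equation $F\sigma(U)=UC$ has no ring in which it holds until you know $Y$ (hence $\sigma(Y)$) \emph{converges on the open unit disk}. That convergence is far from automatic: the recursion $(k+\mathrm{ad}_{N_{-1}})Y_k=-\sum_{j<k}N_{k-1-j}Y_j$ picks up factors of $|1/k|_p$, which blow up along $k=p,p^2,\dots$, so a priori $Y$ may have radius of convergence strictly less than $1$.

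This is exactly where the paper's proof differs from yours. The paper reduces to the constant-residue case $tN(t)=N_{-1}$ by invoking the Christol--Dwork transfer theorem \cite[Theorem 13.7.1]{KKbook}, whose content is precisely that the presence of a Frobenius structure on the annulus forces the formal fundamental solution at a regular singular point with prepared (here nilpotent) exponents to converge on the full open disk. Once that is in hand, your computation and the paper's are equivalent (the paper's constant-$tN$ case is your argument specialised to $Y=I$). Your final paragraph correctly observes that the Sylvester argument only handles finite poles; but ruling out an essential singularity is exactly the hard analytic step, and it cannot be done by formal manipulations alone --- the Frobenius must be used analytically, not just algebraically, and that is the Christol--Dwork input your argument is missing.
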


\begin{proof}
This is exactly \cite[Lemma 6.5.7]{KKcubic}, which in turn is a consequence of \cite[Proposition 17.5.1]{KKbook}.
The proof has two steps. First, one assumes that $t N(t) = \N_0$, a nilpotent matrix. Then (\ref{Eqn-FN}) 
becomes 
\[ t\frac{dF}{dt} + \N_0 F = p F \sigma(\N_0).\]
Now compare coefficients of $t^i$ on both sides and observe that
the map on matrices $X \mapsto \N_0 X + X (i - p \sigma(\N_0))$ has all eigenvalues equal to $i$ since
$\N_0$ and $\sigma(\N_0)$ are nilpotent, by \cite[Lemma III.8.4]{DGS}. Hence we see $F_i = 0$ for $i \ne 0$.
Next, to reduce the general case to the case $t N(t) = \N_0$ one uses a deep ``transfer theorem'' of Christol
and Dwork \cite[Theorem 13.7.1]{KKbook}.
\end{proof}

\begin{note}\label{Note-Lift}
The choice of Frobenius lift $t \mapsto t^p$ is key to the proof --- this was stressed to the author by
Kiran Kedlaya. In the original deformation algorithm,
the author used the lift $s \mapsto s^p$ where $s$ is the local parameter around $t = 1$; that is,
$t \mapsto (t-1)^p + 1$. With this choice of lift one expects an essential singularity at $t = 0$ 
in $F(t)$ unless the family $\Xp/\Sp$ has a ``smooth extension'' over the origin. This simple
observation is the key to extending the deformation method to the case of degenerations --- it eluded
the author for many years.
\end{note}

\subsubsection{Definition and uniqueness}\label{Sec-LFSdefuni}

We now define our limiting Frobenius structures. Note that in this section all bases $\Basis_{rig}$ 
for $H^m_{rig}(\Xp/\Sp)$ arise by
extension of scalars from a basis $\Basis$ of $H^m_{dR}(\XK/\SK)$.

\begin{definition}\label{Def-LFS}
Let $F(t)$ and $N(t)$ be the matrices for the Frobenius and connection, respectively, w.r.t. some basis
$\Basis_{rig}$ of $H^m_{rig}(\Xp/\Sp)$. Assume that the hypotheses of Lemma \ref{Lem-Surprise} are satisfied.
Write $\N_0 := (tN(t))_{t=0}$ for the nilpotent
residue matrix, $\Fr_0 := F(0)$ and $\HH_0 := \langle \Basis_{rig} \rangle_{\K}$. Then by
equation (\ref{Eqn-FN}) we have $\N_0 \Fr_0 = p\Fr_0 \sigma(\N_0)$. We call $(\HH_0,\N_0,\Fr_0)$ the
limiting Frobenius structure (in dimension $m$ of the family $\XW \rightarrow \SW$) 
at $t = 0$ w.r.t. the basis $\Basis_{rig}$.
\end{definition}

\begin{lemma}
With notation and hypotheses as in Definition \ref{Def-LFS}, the matrix $\Fr_0$ is invertible.
\end{lemma}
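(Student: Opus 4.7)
The plan is to produce the inverse of $\Fr_0 = F(0)$ explicitly, by showing that $G(t) := F(t)^{-1}$ also extends analytically across the origin and then evaluating. As recalled in Section \ref{Sec-RRC}, $F(t)$ is invertible in the matrix ring over $\AdagK$, so $G(t)$ has entries in $\AdagK$; in particular those entries admit Laurent series expansions at $t = 0$ converging on some annulus with outer radius $1$, which puts $G$ in the shape to which Lemma \ref{Lem-Surprise} might apply.

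Next I would derive the differential equation satisfied by $G$. Differentiating the identity $FG = I$ and substituting from (\ref{Eqn-FN}) gives
\[ G' = GN - p\sigma(N) t^{p-1} G. \]
Taking transposes and setting $H := G^T$ rearranges this as
\[ H' + (-N^T) H = p H \sigma(-N^T) t^{p-1}, \]
which is exactly (\ref{Eqn-FN}) with $N$ replaced by $-N^T$. The matrix $-N^T$ inherits from $N$ the property that $t(-N^T)$ has entries convergent on the open unit disk, and its residue at $t = 0$ is $-\N_0^T$, still nilpotent. Thus the pair $(H, -N^T)$ satisfies the hypotheses of Lemma \ref{Lem-Surprise}, which yields that $H$, and hence $G$, extends to a matrix of functions on the whole open unit disk around $t = 0$.

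To finish, Lemma \ref{Lem-Surprise} applied directly to $(F, N)$ extends $F(t)$ to the full open unit disk as well. The identity $F(t) G(t) = I$ holds on the annulus on which both matrices are originally given, so by uniqueness of analytic continuation it holds on the whole disk; evaluating at $t = 0$ gives $F(0) G(0) = I$, so $\Fr_0 = F(0)$ is invertible with inverse $G(0)$. The only conceptual step is the observation that Lemma \ref{Lem-Surprise} can be bootstrapped from $F$ to $F^{-1}$ via a transpose, which I expect to be the main (and modest) obstacle; once spotted, everything else is a formal manipulation and an appeal to analytic continuation.
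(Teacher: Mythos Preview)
Your proposal is correct and follows essentially the same approach as the paper: the paper sets $M := -N^t$ and $G := (F^{-1})^t$ (your $H$), checks that $(M,G)$ satisfies the analogue of (\ref{Eqn-FN}), applies Lemma \ref{Lem-Surprise} to conclude $G$ has no negative Laurent terms, and then reads off $F_0 G_0^t = I$ from $FG^t = I$. Your final step via analytic continuation amounts to the same comparison of constant coefficients.
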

 
\begin{proof}
Define $M := - N^t$ and $G := (F^{-1})^t$. Then the pair $(M,G)$ is the ``dual overconvergent $F$-isocrystal''
and satisfies the differential equation
\[ \frac{dG}{dt} + M(t)G(t) = G(t) \sigma(M(t))p t^{p-1}. \]
Now $tM(t)$ converges on the $p$-adic open unit disk and $M(t)$ has residue matrix
$-\N_0^t$, which is nilpotent. Hence by Lemma \ref{Lem-Surprise}, $G(t) = \sum_{i = 0}^\infty G_i t^i$
converges on the open unit disk. But $FG^t = I$ and so $F_0 G_0^t = I$ and $\Fr_0$ is invertible.  
\end{proof}
 
\begin{lemma}\label{Lem-LFSunique}
Assume there are two bases $\Basis_{rig}$ and $\Basis_{rig}^\prime$ which satisfy the hypothesis
in Definition \ref{Def-LFS}. Let $(\HH_0,\N_0,\Fr_0)$ and $(\HH_0^\prime,\N_0^\prime,\Fr_0^\prime)$ be the
associated limiting Frobenius structures at $t = 0$. Then there exists an invertible matrix $H_0$
over $\K$ such that $\N^\prime_0 = H_0 \N_0 H_0^{-1}$ and $\Fr_0^\prime = H_0 \Fr_0 \sigma(H_0)$.
\end{lemma}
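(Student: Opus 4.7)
The plan is to exhibit $H_0$ as the value at $t = 0$ of the change of basis matrix $H(t)$ between $\Basis$ and $\Basis^\prime$ (equivalently between $\Basis_{rig}$ and $\Basis_{rig}^\prime$, since both rigid bases arise by extension of scalars from de Rham bases). Since $H(t) \in \GL_r(\AK) \subseteq \GL_r(\K(t))$, its entries have Laurent expansions around $t = 0$ with only finitely many negative powers. Once we show that both $H(t)$ and $H^{-1}(t)$ are in fact regular at $t = 0$, we may set $H_0 := H(0)$, and the two claimed identities will drop out of evaluating the standard connection and Frobenius transformation rules at $t = 0$ (multiplying the connection identity through by $t$ to accommodate the simple poles on each side).

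Everything therefore comes down to showing that $H(t)$ has no pole at $t = 0$; the argument for $H^{-1}(t)$ will be identical by symmetry. The transformation rule for the Gauss-Manin connection rearranges to $\frac{dH}{dt} = HN - N^\prime H$, where $N$ and $N^\prime$ have at most simple poles at $t = 0$ with residues $\N_0$ and $\N_0^\prime$. Suppose for contradiction that $H(t) = \sum_{i \geq -M} H_i t^i$ with $M \geq 1$ and $H_{-M} \neq 0$. Reading off the coefficient of $t^{-M-1}$ on both sides gives
\[ -M H_{-M} = H_{-M} \N_0 - \N_0^\prime H_{-M}, \]
that is, $H_{-M}$ is an eigenvector with eigenvalue $M$ of the $\K$-linear operator $X \mapsto \N_0^\prime X - X \N_0$ on $\Mat_r(\K)$. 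However, this operator is nilpotent because $\N_0$ and $\N_0^\prime$ are (its iterates expand as sums of terms in which one factor is a high power of $\N_0$ or $\N_0^\prime$), so its only eigenvalue is $0$; contradiction. Applying the same argument to $H^{-1}$, which satisfies the analogous equation $\frac{d(H^{-1})}{dt} = H^{-1} N^\prime - N H^{-1}$ with the roles of $N$ and $N^\prime$ swapped, shows $H^{-1}$ is likewise regular, and therefore $H_0 := H(0)$ is invertible with $H_0^{-1} = H^{-1}(0)$.

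To conclude, multiplying the connection transformation rule through by $t$ and evaluating at $t = 0$ gives $\N_0^\prime = H_0 \N_0 H_0^{-1}$, since the regularity of $H$ at $0$ forces $t \frac{dH}{dt} H^{-1}$ to vanish there. Evaluating the Frobenius transformation rule attached to (\ref{Eqn-FN}) at $t = 0$ produces the claimed conjugation relation between $\Fr_0$ and $\Fr_0^\prime$ involving $H_0$ and $\sigma(H_0)$. The only real obstacle is the pole-order step of the previous paragraph; it is essentially a finite-dimensional shadow of Lemma \ref{Lem-Surprise}, and like that result it depends crucially on both residue matrices being nilpotent --- precisely the defining hypothesis of the limiting Frobenius structure.
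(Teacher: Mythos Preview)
Your proof is correct and follows essentially the same line as the paper's: both show the change-of-basis matrix $H(t)$ (and by symmetry $H^{-1}(t)$) is regular at $t=0$ by reading off the lowest Laurent coefficient in the connection transformation rule and using that $X \mapsto \N_0^\prime X - X\N_0$ has only the eigenvalue zero (the paper cites \cite[Lemma~III.8.4]{DGS}, you argue nilpotency directly), then evaluate at $t=0$. One cosmetic slip: with the paper's convention $N_{[H]} = HNH^{-1} + \frac{dH}{dt}H^{-1}$ the rule rearranges to $\frac{dH}{dt} = N^\prime H - HN$, the opposite sign to what you wrote, but this does not affect the argument.
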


\begin{proof}
Let $H(t)$ be the local expansion around the origin 
of the change of basis matrix from $\Basis$ to $\Basis^\prime$; hence also from $\Basis_{rig}$ to $\Basis_{rig}^\prime$.
Let $N(t)$ and $N(t)^\prime$ denote the connection matrices w.r.t. the two bases,
and $\N_0 = (tN(t))_{t=0}$,  $\N_0^\prime = (tN(t)^\prime)_{t=0}$ the two nilpotent matrices.
(Beware here and elsewhere that the prime superscript has nothing to do with differentiation.)
Then
$(tN^\prime) = H(tN)H^{-1} + t \frac{dH}{dt} H^{-1}$ and so $(tN^\prime) H - H (tN) = t \frac{dH}{dt}$. Write
$H(t) = H_{-s} t^{-s} + \cdots$ as a Laurent series with matrix coefficients where $H_{-s} \ne 0$. So 
$\N_0^\prime H_{-s} - H_{-s} \N_0 = -s H_{-s}$. But the operator on matrices $X \mapsto
\N_0^\prime X - X \N_0$ has eigenvalues the difference of the eigenvalues of the two
nilpotent matrices $\N_0$ and $\N_0^\prime$, by \cite[Lemma III.8.4]{DGS}. 
That is, its eigenvalues are all zero. So $s = 0$ and
$H$ has entries in $\K[[t]]$. By a symmetrical argument so does $H^{-1}$ and so $H(t) = 
H_0 + H_1 t + \cdots $ where $H_0$ is invertible. Thus $\N^\prime_0 = H_0 \N_0 H_0^{-1}$. Since the
Frobenius map $\Fr$ is $\sigma$-linear, the change of basis formula shows
$\Fr_0^\prime = H_0 \Fr_0 \sigma(H_0)$.
\end{proof}

Thus we may speak of ``the'' limiting Frobenius structure $(\HH_0,\N_0,\Fr_0)$ as an abstract triple of
a vector space $\HH_0$ over $\K$, a nilpotent linear operator $\N_0$ and an invertible $\sigma$-linear 
operator $\Fr_0$ such that
$\N_0 \Fr_0 = p \Fr_0 \N_0$, provided it exists w.r.t. at least one basis.

\subsubsection{Existence}

The limiting Frobenius structure might only exist after making a cover of the base. We now
prove the existence of a ``smallest'' limiting Frobenius structure under a lifting hypothesis.
 
For $e$ a positive integer, write $\AW_e :=  \W[s,1/\Delta(s^e)]$, $(\AK)_e := \K[s,1/\Delta(s^e)]$ and
$(\AdagK)_e$ for the weak completion of $\AW_e$ tensored by $\K$. 
Define $\AW \rightarrow \AW_e$, $\AK \rightarrow (\AK)_e$ and
$\AdagK \rightarrow (\AdagK)_e$ by $t \mapsto s^e$. Let $\SW_e := \Spec(\AW_e)$,
so there is a morphism $\SW_e \rightarrow \SW$.
One may pull-back the family $\XW \rightarrow \SW$ via $\SW_e \rightarrow \SW$
to get $\XW_e \rightarrow \SW_e$ where $\XW_e := \XW \times_{\SW} \SW_e$. Write 
$\Xp_e \rightarrow \Sp_e$ and $(\XK)_e \rightarrow (\SK)_e$ for the maps on special and
generic fibres. The morphism $\XW_e \rightarrow \SW_e$ is smooth (even when
$p$ divides $e$ and $\SW_e \rightarrow \SW$ is not) and we
have as before
\begin{equation}\label{Eqn-e}
 H^m_{rig}(\Xp_e/\Sp_e) \cong H^m_{dR}((\XK)_e/(\SK)_e) \otimes_{(\AK)_e} (\AdagK)_e.
\end{equation}

\begin{definition}\label{Def-LFS2}
Assume that the hypotheses of Definition \ref{Def-LFS} are satisfied for the family
$\XW_e \rightarrow \SW_e$ after pulling back the base $\SW_e \rightarrow \SW$ for some positive integer $e$.
Let $(\HH_0,\N_0,\Fr_0)$ be the limiting Frobenius structure for $\XW_e \rightarrow \SW_e$ at $s = 0$.
We call $(\HH_0,\N_0,\Fr_0,e)$ a limiting Frobenius structure for $\XW \rightarrow \SW$ at $t = 0$.
\end{definition}

Note that if one has a chain of covers $\SW_{e^\prime} \rightarrow \SW_e \rightarrow \SW$ and a limiting
Frobenius structure $(\HH_0,\N_0,\Fr_0,e)$ ``on'' $\SW_e$ one may pull it back to a limiting
Frobenius structure $(\HH_0,(e^\prime/e) \N_0,\Fr_0,e^\prime)$ ``on'' $\SW_{e^\prime}$. We then say that
the latter factors through the former.

We now state our main theorem on the existence of limiting Frobenius structures. Recall that Assumption (SP)
is still in place. We shall call the $p$-adic open unit disk around $t = 0$ with the origin removed the ``punctured'' $p$-adic open unit disk around $t = 0$.

\begin{theorem}\label{Thm-LFSexist}
Assume that the polynomial $\Delta(t)$ has no zeros in $\bar{\K}$ within the punctured $p$-adic open unit disk around
$t = 0$. Then a limiting Frobenius structure $(\HH_0,\N_0,\Fr_0,e)$ at $t = 0$ exists. Moreover, there
exists a unique ``smallest'' such limiting Frobenius structure, such that any other factors through it. 
\end{theorem}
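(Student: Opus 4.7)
The plan is to combine the semistable reduction theorem Theorem \ref{Thm-SSR} for algebraic de Rham cohomology with the surprising convergence Lemma \ref{Lem-Surprise} on the $p$-adic side, using the comparison isomorphism (\ref{Eqn-e}) to transport data between the two.

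First, I would apply a non-effective form of Theorem \ref{Thm-SSR} to the Gauss-Manin connection on $H^m_{dR}(\XK/\SK)$ over $\K$ (which is not computable, but only existence is needed here). This produces a smallest positive integer $e$ and a matrix $H(s) \in \GL_r(\K[s,s^{-1}])$ such that, in the resulting basis $\Basis_e$ of $H^m_{dR}((\XK)_e/(\SK)_e)$, the connection matrix $N_e(s)$ has only a simple pole at $s = 0$ with nilpotent residue $\N_0$. Via (\ref{Eqn-e}) the basis $\Basis_e$ lifts to a basis $\Basis_{e,rig}$ of $H^m_{rig}(\Xp_e/\Sp_e)$ with the same connection matrix; let $F_e(s)$ denote the associated Frobenius matrix. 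Since $\Delta(0) = 0$ (otherwise there is nothing to prove), $s$ is a unit in $\AW_e$, hence $H(s) \in \GL_r((\AdagK)_e)$ and the change of basis is legitimate in rigid cohomology as well.

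Next, I would verify the hypotheses of Lemma \ref{Lem-Surprise}. The assumption that $\Delta(t)$ has no zeros in $\bar{\K}$ within the punctured open unit disk around $t = 0$ implies the same for $\Delta(s^e)$ around $s = 0$, so the rational entries of $N_e(s)$ have no poles in the punctured open unit disk; combined with the simple-pole/nilpotent-residue form at the origin, this shows that $sN_e(s)$ extends analytically to the full open unit disk. Lemma \ref{Lem-Surprise} then forces $F_e(s) = \sum_{i \geq 0} F_i s^i$, so $\Fr_0 := F_e(0)$ is well-defined. Extracting the $s^{-1}$-coefficient from the pullback of (\ref{Eqn-FN}) to $\SW_e$ yields $\N_0 \Fr_0 = p\Fr_0 \sigma(\N_0)$, and setting $\HH_0 := \langle \Basis_{e,rig}\rangle_\K$ produces the limiting Frobenius structure $(\HH_0, \N_0, \Fr_0, e)$ in the sense of Definition \ref{Def-LFS2}.

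Finally, for the smallest structure: by the local monodromy theorem the eigenvalues $\{\lambda_i\}$ of the residue of any regular form of the connection on $\SW$ are rational, and the shearing step of Theorem \ref{Thm-SSR} succeeds on $\SW_{e'}$ exactly when $e'\lambda_i \in \Z$ for all $i$. Hence the minimal $e$ equals $\lcm\{\mbox{denominators of }\lambda_i\}$, and any other limiting Frobenius structure $(\HH_0', \N_0', \Fr_0', e')$ necessarily has $e \mid e'$. Pulling $(\HH_0, \N_0, \Fr_0, e)$ back through $\SW_{e'} \to \SW_e$ via $s_1 \mapsto s_1^{e'/e}$ yields $(\HH_0, (e'/e)\N_0, \Fr_0, e')$, which by Lemma \ref{Lem-LFSunique} is conjugate to the given one, proving the minimality claim. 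The main obstacle is ensuring that semistable reduction performed abstractly over the uncountable field $\K$ transports cleanly into rigid cohomology so that the hypotheses of Lemma \ref{Lem-Surprise} genuinely hold; the crucial point is that $H(s) \in \GL_r(\K[s,s^{-1}])$ (rather than merely in $\GL_r(\K((s)))$), so that no spurious singularities are introduced inside the open unit disk when the base change is applied to the Frobenius matrix.
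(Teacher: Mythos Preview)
Your proposal is correct and follows essentially the same route as the paper: apply (a non-effective form of) Theorem \ref{Thm-SSR} over $\K$ to obtain the minimal $e$ and a change of basis $H(s)\in\GL_r(\K[s,s^{-1}])$, observe that this preserves convergence on the punctured open unit disk so Lemma \ref{Lem-Surprise} applies, and then use the eigenvalue characterisation of $e$ together with Lemma \ref{Lem-LFSunique} for minimality. You have correctly isolated the crucial point, namely that $H(s)$ lies in $\GL_r(\K[s,s^{-1}])$ rather than merely $\GL_r(\K((s)))$, which is exactly what the paper emphasises.
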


\begin{proof}
Let $\Basis$ be any basis for $H^m_{dR}(\XK/\SK)$, $r$ be the size of this basis, and $N(t)$ the matrix for the Gauss-Manin connection
w.r.t. this basis. 
The matrix $N(t)$ converges on the punctured $p$-adic open unit disk around $t = 0$, since it has poles
only at the roots of $\Delta(t)$. Theorem \ref{Thm-SSR} guarantees the existence of a smallest positive
integer $e$ and change of basis matrix $H \in \GL_r (\K[s,s^{-1}])$ (here $t = s^e$) such that the matrix for the Gauss-Manin connection on $(\Basis_e)_{[H(s)]}$ has only a simple pole at $s = 0$ with nilpotent residue matrix. Here
$e$ is taken to be the lowest common multiple of the denominators of the eigenvalues $\lambda_i$ of the residue matrix
which occur in the proof of Theorem \ref{Thm-SSR}.
(Note that the hypothesis of computability on the base field $\F$ is only used to establish that the change of basis
matrix $H(s)$ is computable.) Denote this
connection matrix by $N(s)^\prime$. Since $H \in \GL_r (\K[s,s^{-1}])$, the matrix $N(s)^\prime$ converges on the punctured
$p$-adic open unit disk around $s = 0$. Thus the hypotheses of Definition \ref{Def-LFS2} apply after making
the base change $\SW_e \rightarrow \SW$ and we
may define a limiting Frobenius structure at $t = 0$.

A necessary condition for the existence of a limiting
Frobenius structure after pulling back the base $\SW_{e^\prime} \rightarrow \SW$ is that $e^\prime \lambda_i$ are
integers. Hence for any such $e^\prime$ we have that $e$ divides $e^\prime$, hence $\SW_{e^\prime} \rightarrow \SW$
factors through $\SW_e \rightarrow \SW$. Thus we can pull back the limiting Frobenius structure
$(\HH_0,\N_0,\Fr_0,e)$ via $\SW_{e^\prime} \rightarrow \SW_e$ and apply Lemma \ref{Lem-LFSunique} for the final claim.
\end{proof}

We call the ``smallest'' limiting Frobenius structure which occurs in Theorem \ref{Thm-LFSexist}, ``the'' limiting
Frobenius structure at $t = 0$.

\begin{note}\label{Note-pande}
Provided the characteristic $p$ does not divide $e$, then the morphism $\SW_e \rightarrow \SW$ reduces
to an unramified cover $\Sp_e \rightarrow \Sp$. Thus here we have made the overconvergent $F$-isocrystal
$H^m_{rig}(\Xp/\Sp)$ ``unipotent around zero'' after making an unramified cover of the base $\Sp$. So our construction
here agrees with one arising from the semistable reduction theorem. If $p$ divides $e$ then our construction
seems unnatural, since the cover $\Sp_e \rightarrow \Sp$ is totally ramified. 
The author suspects that the hypothesis 
on $\Delta(t)$ in Theorem  \ref{Thm-LFSexist} cannot be satisfied when the order $e$ of the finite part of the
local monodromy around the origin of the family $\XK \rightarrow \SK$ is divisible by $p$. He has a few examples which
suggest this, but no conceptual explanation as to why the order of the monodromy should be related to 
roots of $\Delta(t)$ within the punctured $p$-adic open unit disk.
\end{note}

Let us conclude with a definition which brings together the properties required for
Theorem \ref{Thm-LFSexist} to hold.

\begin{definition}\label{Def-GL}
We shall say that a morphism $\Xp \rightarrow \Sp \subseteq \Aff^1_{\sfq}$ of $\fq$-varieties has a good lift around zero if it arises by base
change from a morphism $\XW \rightarrow \SW$ of $\W$-schemes with the following property.
Assumption (SP) is satisfied (Definition \ref{Def-SP}), and writing $\SW = \Spec(\W[t,1/\Delta(t)])$ the polynomial $\Delta(t)$
has no zeros in $\bar{\K}$ within the punctured $p$-adic open unit disk around zero.
\end{definition}

So by Theorem \ref{Thm-LFSexist}, limiting Frobenius structures exist when one has a good lift around zero.

\subsubsection{Computability: definitions and a sketch}

We now turn to the question of computing limiting Frobenius structures. The field $\K$ is uncountable and
thus we immediately encounter two problems. First, one cannot perform exact arithmetic in $\K$. Second,
we cannot directly apply the results of Section \ref{Sec-Char0}, since there the base field
$\F$ is assumed to be countable. A convenient solution to both problems is to work in a suitable
algebraic number field $\LL \subset \K$, using exact arithmetic in $\LL$ when we wish to apply the ``algebraic'' theory from Section \ref{Sec-Char0}, and approximating elements in $\K$ by those in $\LL$ during the
``analytic'' parts of the algorithm.

Precisely, assume that we have an algebraic number field $\LL$ with ring of integers $\OL$ in which the prime $p$ is 
inert with residue field $\fq$. We have the natural embedding $\OL \hookrightarrow \W$ given by localisation at $p$. First, we give an ``algebraic'' version of Definition \ref{Def-GL}.

\begin{definition}\label{Def-GAL}
We shall say that a morphism $\Xp \rightarrow \Sp  \subseteq \Aff^1_{\sfq}$ of $\fq$-varieties has a good algebraic 
lift around zero if it arises by base change from a morphism $\XOL \rightarrow \SOL$ of $\OL$-schemes with the following property.
Assumption (SP) is satisfied for $\XW := \XOL \times_{\OL} \W \rightarrow \SW := \SOL \times_{\OL} \W$
(Definition \ref{Def-SP}), and writing $\SOL = \Spec(\OL[t,1/\Delta(t)])$ the polynomial $\Delta(t)$
has no zeros in $\bar{\K}$ within the punctured $p$-adic open unit disk around zero.
\end{definition}

When we have a good algebraic lift $\XOL \rightarrow \SOL$, 
write $\XL \rightarrow \SL$ for the base change by $\bullet \otimes_{\OL} \LL$.
By Theorem \ref{Thm-LFSexist} ``the'' limiting Frobenius structure $(\HH_0,\N_0,\Fr_0,e)$
exists for $\XW \rightarrow \SW$ when one has a good algebraic lift around zero. 
Unfortunately, we cannot hope to compute it,
since any matrix for $\Fr_0$ has entries in the uncountable field $\K$. Thus we need a notion of
an ``approximate'' limiting Frobenius structure.

For $N$ and $r$ positive integers and $\Fr_0 \in \GL_r(\K)$, we shall call a matrix $\tilde{\Fr}_0 \in \GL_r(\LL)$ a
$p^N$-approximation to $\Fr_0$ if $\ord_p(\tilde{\Fr}_0 - \Fr_0) \geq N$. 

\begin{definition}
Let $N$ be a positive integer. 
Assume that $\Xp \rightarrow \Sp$ has a good algebraic lift around zero.
A $p^N$-approximation to the limiting Frobenius structure at $t = 0$
in dimension $m$ for the smooth morphism $\XW \rightarrow \SW$ is a quadruple
$(\HH_0,\N_0,\tilde{\Fr}_0,e)$ with the following properties,
\begin{itemize}
\item{$\HH_0$ is a vector space over $\K$, $\N_0$ a nilpotent matrix over $\LL$, and
$\tilde{\Fr}_0$ an invertible matrix over $\LL$. }
\item{the limiting Frobenius structure for $\XW \rightarrow \SW$ in dimension $m$ at $t = 0$ exists and
w.r.t. some basis is $(\HH_0,\N_0,\Fr_0,e)$ with 
$\tilde{\Fr}_0$ a $p^N$-approximation to $\Fr_0$.}
\end{itemize}
\end{definition}

The ``algebraic'' part of the algorithm for computing approximate limiting Frobenius structures is
to find an integer $e$ and basis for $H^m_{dR}((\XL)_e/(\SL)_e)$ such that the matrix
for the connection has a simple pole at $s = 0$ with nilpotent residue matrix. 
(We use analogous notation for pulling back $\SL$-schemes via the morphism $t \mapsto s^e$
as before; that is, introduce a subscript $e$.) This allows us to compute exactly the nilpotent monodromy
matrix $\N_0$, and of course the integer $e$. The following theorem shows this can be accomplished
provided we know the connection matrix on some initial basis for $H^m_{dR}(\XL/\SL)$.

\begin{theorem}\label{Thm-NsGL}
There exists an explicit deterministic algorithm with the following input and output.
The input is a matrix for the Gauss-Manin connection
$\nabla$ w.r.t. some basis of $H^m_{dR}(\XL/\SL)$, where $\Xp \rightarrow \Sp$ has a good
algebraic lift around $t = 0$ . The output is a matrix $N(s)$ for $\nabla_e$ w.r.t. some
explicit basis
of $H^m_{dR}((\XL)_e/(\SL)_e)$ such that $N(s)$ converges on the punctured $p$-adic open unit disk
around $s = 0$ and has a simple pole at $s = 0$ with nilpotent residue matrix.
\end{theorem}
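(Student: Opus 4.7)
The plan is to reduce directly to Theorem \ref{Thm-SSR} (applied over the computable field $\F = \LL$), and then separately check that the change of basis produced there preserves $p$-adic convergence on the punctured open unit disk around the origin. The point is that Theorem \ref{Thm-SSR} is purely algebraic — it knows nothing about $p$-adic analysis — but the key property we need beyond what it guarantees is precisely the $p$-adic convergence, which will come for free from the good algebraic lift hypothesis together with the fact that the change of basis matrix produced in Theorem \ref{Thm-SSR} lies in $\GL_r(\LL[s,s^{-1}])$.

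First I would feed the given matrix $N(t)$ for $\nabla$ on $H^m_{dR}(\XL/\SL)$ into the algorithm of Theorem \ref{Thm-SSR} with $\F = \LL$. This produces a positive integer $e$ and a matrix $H(s) \in \GL_r(\LL[s,s^{-1}])$ such that the pulled-back and transformed connection matrix
\[ N(s)^\prime := (es^{e-1} N(s^e))_{[H(s)]} = H(s) \cdot e s^{e-1} N(s^e) \cdot H(s)^{-1} + \frac{dH}{ds} H(s)^{-1} \]
has only a simple pole at $s = 0$ with nilpotent residue matrix. This $N(s)^\prime$ is the matrix for $\nabla_e$ on $H^m_{dR}((\XL)_e/(\SL)_e)$ with respect to the basis $(\Basis_e)_{[H(s)]}$.

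Next I would verify $p$-adic convergence. Since $\Xp \rightarrow \Sp$ has a good algebraic lift around zero, by Definition \ref{Def-GAL} the polynomial $\Delta(t) \in \OL[t]$ has no zeros in $\bar{\K}$ in the punctured $p$-adic open unit disk around $t = 0$. Hence $N(t)$, whose entries lie in $\LL[t,1/\Delta(t)]$, converges on that punctured disk. Under the substitution $t = s^e$, any $s \neq 0$ in the $p$-adic open unit disk satisfies $s^e$ lies in the punctured $p$-adic open unit disk around $t = 0$, so $\Delta(s^e) \neq 0$ and hence $e s^{e-1} N(s^e)$ converges on the punctured $p$-adic open unit disk around $s = 0$. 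The crucial observation now is that both $H(s)$ and $H(s)^{-1}$, being elements of $\GL_r(\LL[s,s^{-1}])$, are Laurent polynomials in $s$ with no poles away from $s = 0$ in $\bar{\K}$. Consequently, $N(s)^\prime$ inherits convergence on the punctured $p$-adic open unit disk around $s = 0$, and the only pole there is at $s = 0$, which by construction is simple with nilpotent residue matrix. This completes the construction.

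The only obstacle worth flagging is the second step — it is essential that the change of basis matrix produced by Theorem \ref{Thm-SSR} lies in $\GL_r(\LL[s,s^{-1}])$ rather than merely in $\GL_r(\LL((s)))$, since a change of basis that was only a formal Laurent series could easily destroy convergence on the open unit disk. This is exactly the content of the ``explicit'' strengthening in Theorems \ref{Thm-ExplicitReg} and \ref{Thm-SSR}, and it is the reason we invoke these effective versions rather than the classical regularity theorem. Once this is in hand the rest is bookkeeping.
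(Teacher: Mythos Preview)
Your proposal is correct and takes essentially the same approach as the paper: apply Theorem \ref{Thm-SSR} over the computable field $\LL$, then observe that because $H(s) \in \GL_r(\LL[s,s^{-1}])$ the change of basis introduces no new poles, so convergence on the punctured $p$-adic open unit disk is inherited from the good algebraic lift hypothesis on $\Delta(t)$. The paper's own proof is more terse, simply citing the argument in the proof of Theorem \ref{Thm-LFSexist}, but the content is identical to what you have written out.
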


\begin{proof}
Let $\Basis$ be the basis used for the input. Then the connection matrix w.r.t. this basis has only poles
at the roots of $\Delta(t)$ and thus converges on the punctured $p$-adic open unit disk around $t = 0$.
 Now we apply
Theorem \ref{Thm-SSR} exactly as in the proof of Theorem \ref{Thm-LFSexist} noting that
$\LL$ is computable.
\end{proof}

The ``analytic'' part of the algorithm is to compute a suitable approximation to the matrix for Frobenius on this basis.
Let us first formalise our notion of approximate Frobenius matrices. For $e \geq 1$ define as before
$(\AdagK)_e$ to be the weak completion of $\W[s,1/\Delta(s^e)]$ tensored by $\K$. Write $(\AL)_e := \LL[s,1/\Delta(s^e)]$ and note that $(\AL)_e \hookrightarrow (\AdagK)_e$.
Let $N$ and $r$ be positive integers. We call $\tilde{F}(s) \in \GL_r((\AL)_e)$ a $p^N$-approximation to 
$F(s) \in \GL_r((\AdagK)_e)$ if $\ord_p(\tilde{F}(s) - F(s)) \geq N$. Thus our aim is to compute
a $p^N$-approximation $\tilde{F}(s)$ to the matrix $F(s)$ for Frobenius on this basis, and then ``specialise'' this
approximation at $s = 0$ to obtain a $p^N$-approximation $\tilde{\Fr}_0$ to $\Fr_0$. The approximation $\tilde{F}(s)$ can be found using the original deformation method. 

Rather than state a formal theorem on approximate computability of limiting Frobenius structures in this
general setting, let us focus now on the case of degenerations of projective hypersurfaces.

\subsection{Hypersurfaces}

In this section we prove our main result (Theorem \ref{Thm-LFShypersurfaces}) on the
computability of limiting Frobenius structures for degenerations of hypersurfaces, and
discuss the asymptotic complexity of the algorithm underlying this theorem (Section \ref{Sec-Complexity}).

\subsubsection{Computability of limiting Frobenius structures}\label{Sec-CLFSHyper}

Let us make a convenient definition to ease the statement of our main theorem. As before
$\OL$ is the ring of integers of an algebraic number field $\LL$ which is inert at the rational prime $p$ with
residue field $\fq$, and
$\OL \hookrightarrow \W$ is the embedding. Now take $\F$ to be the computable field $\LL$ and
define
notation in a similar manner to the first paragraph of Section \ref{Sec-CPFS} but taking $\PF_t$ to have coefficients
in $\OL[t]$ instead of $\LL[t]$.  Precisely, let $\Delta(t) \in \OL[t]$ be any polynomial which has as a factor the
polynomial $\Delta_{n,d}(\PF_t)$ defined by specialising
$\Delta_{n,d}$ at the coefficients of $\PF_t$. Assume $\Delta(t) \ne 0$. Let
$\SOL := \Spec(\OL[t,1/\Delta(t)])$ and $\XOL \subset \pr^{n+1}_{\SOL}$ be the scheme defined by
$\PF_t = 0$.  Denote by $\Xp \rightarrow \Sp$ (respectively $\XL \rightarrow \SL$) the morphism
of $\fq$-varieties (respectively $\LL$-varieties) obtained by tensoring 
$\XOL \rightarrow \SOL$ by $\bullet \otimes_{\OL} \fq$ (respectively $\bullet \otimes_{\OL} \LL$).
So each fibre
in the family $\XL \rightarrow \SL$ is smooth and in general position (Definition \ref{Def-GenPos}).
Now make the additional assumption that $\Delta(1) \ne 0 \bmod{p}$ and $\Delta(t)$ has
no zeros in the punctured $p$-adic open unit disk around $t = 0$.

\begin{definition}\label{Def-GALhyper}
We shall say that a smooth family of hypersurfaces $\Xp \rightarrow \Sp$ of degree $d$ over $\fq$ is in general position and
has a good algebraic lift
around $t = 0$  if
it arises by reduction modulo $p$ from a morphism $\XOL \rightarrow \SOL$ of $\OL$-schemes of the type
described in the paragraph immediately above.
\end{definition}

\begin{theorem}\label{Thm-LFShypersurfaces}
There exists an explicit deterministic algorithm which takes as input a positive integer $N$ and a smooth family of hypersurfaces
$\Xp \rightarrow \Sp$ over $\fq$ which is in general position and has a good algebraic lift around $t = 0$, and gives as output a $p^N$-approximation to the primitive middle-dimensional limiting Frobenius structure 
at $t = 0$ of the family $\Xp \rightarrow \Sp$.
\end{theorem}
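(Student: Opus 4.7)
The plan is to decompose the algorithm into two pieces, an exact ``algebraic'' computation over $\LL$ that yields $e$ and the monodromy operator $\N_0$, and an ``analytic'' $p$-adic computation that produces a $p^N$-approximation to the Frobenius matrix, following the general template sketched just before the statement of the theorem.

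For the algebraic part, I would first apply Theorem \ref{Thm-GMDwork} to the family $\XL \to \SL$ with the polynomial $\PF_t$ defining the lift: this produces the explicit Picard--Fuchs matrix $N(t)$ for the Gauss--Manin connection on the primitive middle-dimensional cohomology $\HH$ with respect to the Dwork basis $\Basis_\XR$. Because every fibre of $\XL \to \SL$ is smooth and in general position, this basis is defined on all of $\SL$, so the poles of $N(t)$ lie among the roots of $\Delta(t)$, and in particular $N(t)$ converges on the punctured $p$-adic open unit disk around $t = 0$. I would then feed $N(t)$ into Theorem \ref{Thm-NsGL} (which in turn invokes Theorems \ref{Thm-ExplicitReg} and \ref{Thm-SSR}): the output is the required integer $e$ together with a matrix $H(s) \in \GL_r(\LL[s,s^{-1}])$ such that the Gauss--Manin matrix $N^\prime(s)$ for $\nabla_e$ w.r.t.\ the transformed basis $\Basis^\prime := (\Basis_{\XR,e})_{[H(s)]}$ has a simple pole at $s = 0$ with nilpotent residue matrix $\N_0 \in \Mat_r(\LL)$. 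Both $e$ and $\N_0$ are produced exactly, as matrices of algebraic numbers, and because $H \in \GL_r(\LL[s,s^{-1}])$ the matrix $N^\prime(s)$ still converges on the punctured $p$-adic open unit disk around $s = 0$.

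For the analytic part, I would run the original deformation method on the cover $\XW_e \to \SW_e$. The base field assumption $\Delta(1) \not\equiv 0 \bmod p$ ensures that $s = 1$ (lying over $t = 1$) reduces to a smooth fibre with a good reduction, so one can compute an initial Frobenius matrix $F(1)$ on the basis $\Basis^\prime$ at $s = 1$ to any desired $p$-adic precision using known point-counting/cohomology algorithms for a single smooth hypersurface. Taking the Frobenius lift $s \mapsto s^p$ on $\SW_e$ (compatible with $t \mapsto t^p$ on $\SW$), the matrix $F(s)$ for Frobenius on $\Basis^\prime$ satisfies the differential equation
\[
\frac{dF}{ds} + N^\prime(s) F(s) = F(s)\,\sigma(N^\prime(s))\,p s^{p-1},
\]
and one can solve this uniquely for $F(s)$ as a $p$-adic analytic expansion around $s = 1$ by iterating the deformation equation. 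The crucial point is that Lemma \ref{Lem-Surprise}, applied to the pair $(N^\prime(s),F(s))$ (whose hypotheses are exactly guaranteed by the algebraic step above), forces $F(s)$ to extend from the punctured disk to an analytic function on the whole open unit disk around $s = 0$. Hence specialising at $s = 0$ makes sense, and one may truncate and bound the approximation $\tilde F(s) \in \GL_r((\AL)_e)$ produced by the deformation method so that $\ord_p(\tilde F - F) \geq N$, and then take $\tilde \Fr_0 := \tilde F(0)$.

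The hard part will be propagating the precision: the deformation method naturally controls $F(s)$ as a $p$-adic series in $s$ (or in a local parameter at a smooth fibre), but here we need a uniform bound $\ord_p(\tilde \Fr_0 - \Fr_0) \geq N$ at the singular point $s = 0$, where $N^\prime(s)$ itself has a pole. One therefore has to bound, in terms of $N$, the number of terms in the series expansion of $F(s)$ and the working $p$-adic precision that must be maintained while inverting $H(s)$, evaluating at $s = 0$, and iterating the differential equation; this amounts to an effective version of the Christol--Dwork transfer theorem implicit in Lemma \ref{Lem-Surprise}. Once this bookkeeping is set up (details worked out in the complexity discussion of Section \ref{Sec-Complexity}), the quadruple $(\HH_0,\N_0,\tilde \Fr_0,e)$ is the desired $p^N$-approximation to the limiting Frobenius structure.
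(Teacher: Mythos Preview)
Your proposal follows essentially the same approach as the paper's proof: compute the Picard--Fuchs matrix on the Dwork basis via Theorem~\ref{Thm-GMDwork}, apply Theorem~\ref{Thm-NsGL} to get $e$, $H(s)$, and the nilpotent residue $\N_0$, then use the deformation method with the Frobenius lift fixing the origin, and finally invoke Lemma~\ref{Lem-Surprise} to specialise at $s=0$.

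There is, however, one organizational difference worth flagging, and it bears directly on what you call ``the hard part''. You propose to change basis \emph{first} and then run the deformation method on the cover $\SW_e$ with respect to the transformed basis $\Basis'$ and connection $N'(s)$. The paper instead runs the deformation method on the \emph{original} Dwork basis to produce a $p^{N+2\delta}$-approximation $\tilde F(t)$ (where $-\delta$ is the minimum of the $p$-adic orders of $H(s)$ and $H(s)^{-1}$), and only \emph{afterwards} substitutes $t=s^e$ and conjugates by $H(s)$ to obtain $\tilde F(s)'$. With this ordering the precision loss is transparent: it is at most $2\delta$, coming entirely from multiplying by $H$ and $H^{-1}$. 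So the precision bookkeeping you worry requires ``an effective version of the Christol--Dwork transfer theorem'' is in fact elementary once you arrange the computation this way; no effective transfer theorem is needed.

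One small point you gloss over: the approximation $\tilde F(s)'$ is a matrix of rational functions and may well have a pole at $s=0$ even though the true $F(s)'$ does not. Writing ``$\tilde\Fr_0 := \tilde F(0)$'' is therefore not quite right; the paper notes that one should instead expand the entries of $\tilde F(s)'$ in $\LL((s))$ and take the coefficient of $s^0$.
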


\begin{proof}
Let $\Basis := \Basis_{\XL}$ be the
Dwork basis for the primitive middle-dimensional cohomology of $\XL \rightarrow \SL$. 
Theorem \ref{Thm-GMDwork} shows that one may compute the matrix $N(t)$ for the Gauss-Manin connection on
$\Basis$. Theorem \ref{Thm-NsGL} then allows us to compute the pull-back $(\SOL)_e \rightarrow \SOL$ and
basis change matrix $H(s)$ needed so that the Picard-Fuchs matrix $N(s)^\prime$, say, 
w.r.t. the basis $(\Basis_e)_{[H(s)]}$ has a simple pole at $s = 0$ with nilpotent residue matrix and
converges on the punctured $p$-adic open unit disk. Let $-\delta$ be the minimum of the $p$-adic
orders of $H(s)$ and $H(s)^{-1}$. Suppose that one can compute a $p^{N+2\delta}$-approximation
$\tilde{F}(t)$ to the matrix for Frobenius $F(t)$ on $\Basis_{rig}$. Then by substituting $t = s^e$ and
changing basis by $H(s)$ one obtains a $p^N$-approximation $\tilde{F}(s)^\prime$, say, to the matrix
for Frobenius $F(s)^\prime$ on $(\Basis_{rig,e})_{[H(s)]}$.  By Lemma \ref{Lem-Surprise} the matrix
$F(s)^\prime$ has no pole at $s = 0$ and specialising $sN(s)^\prime$ and $\tilde{F}(s)^\prime$ at $s = 0$ gives
the data of a $p^N$-approximation of the limiting Frobenius structure. (One has to take some
care in ``specialising'' $\tilde{F}(s)^\prime$ at $s = 0$ as this matrix may have a pole. To obtain an approximation of
$F(0)^\prime$ embed the entries of $\tilde{F}(s)^\prime$ in $\LL((t))$ and take the coefficient of $t^0$.) 

To compute the matrix $\tilde{F}(t)$ itself we use the deformation method.
This requires two inputs. First, the Picard-Fuchs matrix $N(t)$. Second, the matrix for the Frobenius action on the basis $\Basis_1$ of $H^n_{rig}(\Xp_1)$ obtained from $\Basis_{rig}$ via $H^n_{rig}(\Xp_1) \cong
H^n_{rig}(\Xp/\Sp) \otimes_{\AdagK} \K$ $(t \mapsto 1)$ \cite[Theorem 3.2]{L1}. The matrix for the Frobenius action on the fibre
is needed to some higher $p$-adic precision, due to precision loss during the deformation method.
Precisions required can be calculated as in \cite[Section 6]{KKcubic},
\cite[Section 5]{L1}. One uses Kedlaya's algorithm to calculate the Frobenius matrix at the fibre $t = 1$
\cite[Section 3.5]{AKR}.
\end{proof} 

\begin{note}\label{Note-ThmLFShypersurfaces}
Note that here Assumption (SP) is satisfied, since the primitive cohomology is just that of the family of complements
of the smooth hypersurfaces in $\pr^{n+1}_\W$. Of course, by primitive middle-dimensional limiting Frobenius
structure we mean the limiting Frobenius structure in the middle dimension for this family of complements. Indeed, there is a small clash of notation between Section \ref{Sec-Char0}
and \ref{Sec-Charp} --- we really wish to take the family ``$\XW \rightarrow \SW$'' in Section \ref{Sec-Charp} to be the family of complements
of hypersurfaces, not the family ``$\XL \rightarrow \SL$'' of hypersurfaces itself.
\end{note}

\subsubsection{Complexity}\label{Sec-Complexity}

We do not attempt an asymptotic analysis of the algorithm in Theorem \ref{Thm-LFShypersurfaces}, but
confine ourselves instead to a few comments.

Assume that the matrix for the Gauss-Manin connection w.r.t. to the Dwork basis has a simple pole with nilpotent
residue matrix. Then our algorithm is exactly the same as the original deformation method, except that
we use the Frobenius lift $t \mapsto t^p$ rather than $t - 1 \mapsto (t - 1)^p$ (Note \ref{Note-Lift}). When
we solve the Picard-Fuchs differential system locally around $t = 1$ during the algorithm, using the lift
$t  \mapsto t^p$ is slower in practice than using the lift $(t - 1) \mapsto (t-1)^p$ (cf. \cite[Section 5.1]{L1});
however, provided one uses the ``Tsuzuki method'' \cite[Section 5.1.2]{L1} the asymptotic complexity is the same.
To obtain asymptotic complexity bounds, it is simpler to assume that one only computes the matrix for the
connection w.r.t. the Dwork basis to a suitable $p$-adic precision rather than as an exact matrix of rational
functions over an algebraic number field. This is all worked out explicitly in the analogous setting of ``Dwork cohomology'' in  \cite[Sections 5, 6.1, 10: Step 2]{Lfocm}. By comparison with the analysis in \cite[Section 10]{L1}, one sees that using an
approximate connection matrix our algorithm certainly has running time which
is polynomial in $p,\log_p(q),(d+1)^n$ and $N$.

A necessary condition for our assumption on the connection matrix to be met is that the pull-back
$e = 1$, and hence the Picard-Lefschetz transformation is unipotent, cf. Theorem \ref{Thm-LMT}. When this
geometric condition is met (for example, a semistable degeneration of curves) then the author has observed
that frequently in practice our assumption on the connection matrix holds true. However, it is certainly
not the case that it is always true, and the author does not understand its true geometric significance, if any.

When the matrix for the connection does not have a simple pole with nilpotent residue matrix w.r.t. the Dwork
basis, then the author makes no claims on the complexity of the algorithm. Computing pull-backs and
using shearing transformations to prepare eigenvalues is not too problematic --- both were usually
necessary for the examples in Section \ref{Sec-Exs}.
The most time consuming
problem in practice is when there is not even a simple pole, and one needs to use a cyclic vector; see
Example \ref{Ex-Quintic}.

\subsection{Limit Frobenius-Hodge structures}\label{Sec-LFHS}

We conclude our description of degenerations in positive characteristic with a less formal discussion of the 
integral structure and Hodge filtration on our limiting Frobenius structures. Our hope is to convince
the reader without detailed proofs that these additional structures exist and can be computed in
quite wide generality.

Assume that we have an algebraic number field $\LL$ with ring of integers $\OL$, 
and a smooth and proper morphism $\XOL \rightarrow \SOL$ of $\OL$-schemes, where
$\SOL = \Spec(\AOL)$ with $\AOL := \OL[t,1/\Delta(t)]$ for some $\Delta(t) \in \OL[t]$.
We denote by $\XL \rightarrow \SL$ extension of scalars of $\XOL \rightarrow \SOL$ from $\OL$ to $\LL$.
We follow our usual convention of using the subscript $e$ to denote pulling-back along the
base via $s^e := t$.
Theorem \ref{Thm-SSR} guarantees the existence of $e \geq 1$ and a basis
for $H^m_{dR}((\XL)_e/(\SL)_e)$ such that the matrix for the Gauss-Manin connection
w.r.t. this basis has a simple pole at $s = 0$ with nilpotent residue matrix. To simplify notation
in our discussion let us suppose that $e = 1$, and write $\Basis$ for the basis. 

Let $p$ be a rational prime which is inert in $\OL$, and $\XW \rightarrow \SW$ be the
morphism obtained by base change from $\OL$ to $\W$. Denote by
$H^m_{cris}(\XW/\SW)$ relative crystalline cohomology.
Write $\AW := \AOL \otimes_{\OL} \W$, let $\hat{\AW}$ be the $p$-adic completion of $\AW$, and
let $\Adag$ be the weak completion of $\AW$ (not tensored by $\K$), see Section \ref{Sec-RRC}.
We claim then that $\Basis$ maps naturally to a free $\hat{A}$-basis $\Basis_{cris}$ for $H^m_{cris}(\XW/\SW)$ provided $p$ is sufficiently large. We argue as follows.

For almost all such rational primes $p$ one may canonically map $\Basis$ to a
subset $\Basis_{dR}$ of $H^m_{dR}(\XW/\SW)$ and provided $p$ is sufficiently large (depending on the geometry
of the morphism $\XL \rightarrow \SL$) then $H^m_{dR}(\XW/\SW)$ will be free with
$\Basis_{dR}$ as an $\AW$-basis. One expects a crystalline relative comparison theorem,
$H^m_{cris}(\XW/\SW) \cong H^m_{dR}(\XW/\SW) \otimes_{\AF} \hat{\AF}$, although the author
does not know of a reference. Thus extending scalars $\Basis_{dR}$ maps to an $\hat{A}$-basis
$\Basis_{cris}$ of $H^m_{cris}(\XW/\SW)$. One can make this construction of $\Basis_{cris}$ entirely explicit for
families of smooth hypersurfaces using a relative version of the theorems and algorithms in \cite{AKR}.

Write $\HH_\W := H^m_{cris}(\XW/\SW)$ and denote by $\Fr$ the Frobenius on $\HH_\W$.
The cohomology group $\HH_\W$ inherits a filtration $\HH_\W^\bullet$ from
the Hodge filtration on $H^m_{dR}(\XW/\SW)$.
Applying Mazur's ``Frobenius-Hodge theorem'' \cite{BM} fibre-by-fibre it follows that $\Fr$
 is ``divisible by $p^i$'' on $\HH_\W^i$; that is, $\Fr(\HH_\W^i) \subseteq p^i \HH_\W$.
 
Assume now that $p$ is such that $\Basis_{cris}$ is a basis for $\HH_\W$.
Let $F(t)_{cris}$ denote the matrix for $\Fr$ w.r.t. the basis $\Basis_{cris}$. Since
 $\Basis_{cris}$ is also naturally a basis for $H^m_{rig}(\Xp/\Sp)$ the matrix $F(t)_{cris}$ has
entries in $\AdagK \cap \hat{A} = \Adag$.
The connection matrix $N(t)_{cris}$ has coefficients in $\AOL \subset \AW$ and
a simple pole at $t = 0$ with nilpotent residue matrix. For all
but finitely many primes $N(t)_{cris}$ will converge on the punctured $p$-adic open unit disk
around zero. Thus if we further exclude this finite set of primes, the basis $\Basis_{cris}$ will satisfy the hypotheses
of Definition \ref{Def-LFS} and one can specialise at $t = 0$ to obtain a triple
$(\HH_0,\N_0,\Fr_0)_\W$ of a free $\W$-module $\HH_0$ with operators $\N_0$ and $\Fr_0$ such that
$\N_0 \Fr_0 = p \Fr_0 \N_0$ and $\N_0^{m+1} = 0$. Moreover, there is a natural filtration
$\HH_0^{\bullet}$ such that $\Fr_0(\HH_0^i) \subseteq p^i \HH_0$ coming from the Hodge filtration
on $\HH_\W$. 

Allowing once again $e > 1$ we have constructed the ``limiting Frobenius-Hodge structure'' $(\HH_0^\bullet,\N_0,\Fr_0,e)_\W$ from Section \ref{Sec-Intro}. Note that when $\XL \rightarrow \SL$ ``extends'' to a semistable
degeneration over the origin (cf. Definition \ref{Def-SSex})
we have $e = 1$, and the pair ``$(\HH_0^\bullet,\N_0)_{\W} \otimes_{\W} \C$'' is exactly the limiting
mixed Hodge structure from complex algebraic geometry \cite[Chapter 11]{PS}.

\begin{note}\label{Note-Names}
The term ``limiting mixed Hodge structure'' 
arises naturally since there is already the notion of a ``mixed Hodge structure'' and a ``Hodge structure''.  In rigid
or crystalline cohomology there is no formal notion of a ``Frobenius structure'' to the author's knowledge, although
the term is used loosely, e.g. in ``differential equation with a Frobenius structure''. The object which we call a
``limiting Frobenius structure'' might be better called a ``limiting mixed $F$-isocrystal'', and
our ``limiting Frobenius-Hodge structure'' a ``limiting mixed Hodge $F$-crystal''.
\end{note}

\section{Degenerations and ``Clemens-Schmidt''}\label{Sec-CS}

This section contains no original theorems, but rather gives a conjectural interpretation of the geometric
significance of the limiting Frobenius structures we define. The notation in Sections \ref{Sec-CAV},
\ref{Sec-SSR} and \ref{Sec-LMEC} mainly follows the original sources, and only in Section \ref{Sec-CSRC}
do we return to the notation from Section \ref{Sec-Charp}.

\subsection{Complex algebraic varieties}\label{Sec-CAV}

The material in this section is based upon the exposition in \cite{DM}. Throughout
$H^m(\bullet)$ and $H_m(\bullet)$ denote singular cohomology and homology, respectively, with rational coefficients

Let $\Delta$ denote the complex open unit disk. A degeneration is a proper flat
holomorphic map $\pi: \XC \rightarrow \Delta$ of relative dimension $n$ such that
$\XC_t := \pi^{-1}(t)$ is a smooth complex variety for $t \ne 0$, and $\XC$ is
a Kahler manifold. A degeneration is semistable if the central fibre $\XC_0$ is a
reduced divisor with global normal crossings; that is, writing $\XC_0 = \sum_i \XC_{0,i}$ as
a sum of irreducible components, each $\XC_{0,i}$ is smooth and the $\XC_{0,i}$ meet
transversally. 

We recall a fundamental theorem on degenerations, due to D. Mumford with the assistance of F. Knudsen and
A. Waterman, see \cite{DM} or \cite[Page 102]{DSM}.

\begin{theorem}[Semistable Reduction]
Given a degeneration $\pi: \XC \rightarrow \Delta$ there exists a base
change $b: \Delta \rightarrow \Delta$ (defined by $t \mapsto t^e$ for some
positive integer $e$), a semistable degeneration $\psi: \YC \rightarrow
\Delta$ and a diagram
\[
\begin{array}{ccccc}
\YC & \stackrel{f}{\dashrightarrow} & \XC_b & \longrightarrow & \XC\\
       &  \stackrel{\psi}{\searrow} & \downarrow &  & \downarrow \\
       &                 & \Delta & \stackrel{b}{\longrightarrow} & \Delta
\end{array}
\]
such that $f: \YC \dashrightarrow \XC_b$ is a bimeromorphic map obtained by blowing up and blowing
down subvarieties of the central fibre. 
\end{theorem}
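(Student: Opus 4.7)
The plan is to follow the classical strategy of Kempf--Knudsen--Mumford--Saint-Donat, reducing the problem to combinatorics on cones through a sequence of three geometric modifications of $\XC \to \Delta$.

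First, I would apply Hironaka's resolution of singularities to replace $\XC$ by a birational modification which is smooth and over which the central fibre is a divisor with global normal crossings. The price is that the central fibre may no longer be reduced: its components $\XC_{0,i}$ acquire multiplicities $m_i \ge 1$. This step is standard but non-constructive; it furnishes a smooth total space and a combinatorial model for the central fibre, and the remaining work consists of forcing the multiplicities down to $1$.

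Next, I would perform the base change $b:\Delta \to \Delta,\ t \mapsto t^e$, where $e$ is (a multiple of) the least common multiple of the multiplicities $m_i$. The fibre product $\XC_b = \XC \times_\Delta \Delta$ is singular, but its singularities are of an especially mild kind: along each stratum where $r$ components of the central fibre of multiplicities $m_{i_1},\ldots,m_{i_r}$ meet, the local model is the toric variety $\mathrm{Spec}\,\C[x_1,\ldots,x_r,t]/(x_1^{m_{i_1}}\cdots x_r^{m_{i_r}} - t^e)$, i.e., a cyclic quotient of affine space. This is where the analysis becomes combinatorial: the whole neighbourhood of the central fibre carries the structure of a toroidal embedding, encoded by a rational polyhedral complex whose cones record the multiplicities.

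The hard part, and the final step, is the toroidal resolution. One applies the Kempf--Knudsen--Mumford--Saint-Donat theorem on equivariant resolution of toric singularities: by a sufficiently fine rational subdivision of the associated conic complex, one obtains a proper birational morphism $f:\YC \dashrightarrow \XC_b$ (a sequence of blow-ups and blow-downs of strata in the central fibre) to a smooth variety $\YC$ whose central fibre is a reduced normal crossings divisor, yielding the semistable family $\psi:\YC \to \Delta$. The main obstacle is purely combinatorial: one must exhibit a subdivision of the cones (with integral vertices on the slice $\{t=1\}$) that simultaneously smooths every toric chart and makes the pullback of $t$ a squarefree monomial on each chart, and then glue the local subdivisions into a global one. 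The existence of such a subdivision follows from the standard lemma that any rational polyhedral cone admits a regular unimodular subdivision compatible with a specified distinguished linear form, applied to the linear form recording the multiplicities. Assembling the three steps produces the required diagram.
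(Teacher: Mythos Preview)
The paper does not prove this theorem: it is stated in Section~4.1 as background and attributed to Mumford (with Knudsen and Waterman), with a citation to \cite{DM} and \cite{DSM}. Your outline is a faithful sketch of precisely the Kempf--Knudsen--Mumford--Saint-Donat argument that the paper is citing, so in that sense you and the paper agree completely.

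As a sketch your three steps are correct and in the right order: Hironaka to make the central fibre a (non-reduced) normal crossings divisor on a smooth total space, base change by $t\mapsto t^e$ with $e$ the lcm of the multiplicities to make the problem toroidal, and then the combinatorial toroidal resolution via unimodular subdivision of the associated polyhedral complex. One small point of precision: the subdivision step is not merely ``smoothing every toric chart'' but specifically choosing a subdivision whose maximal cones are standard simplices with vertices on the hyperplane where the distinguished linear form equals $1$; this is what simultaneously ensures smoothness of $\YC$ and reducedness of the new central fibre. You allude to this but it is worth stating sharply, since it is the crux of the argument and the place where the combinatorics is genuinely non-trivial.
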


Let $\pi: \XC \rightarrow \Delta$ be a degeneration, and $\pi^* : \XC^* \rightarrow
\Delta^*$ the restriction to the punctured disk $\Delta^* := \Delta \backslash \{0\}$. Fix a smooth
fibre $\XC_t$ ($t \ne 0$). Since $\pi^*$ is a $C^\infty$ fibration, the fundamental group $\pi_1(\Delta^*)$ acts on the
cohomology $H^m(\XC_t)$, for each $0 \leq m \leq 2n$. The map 
\[ T_m: H^m(\XC_t) \rightarrow H^m(\XC_t) \]
induced by the canonical generator of $\pi_1 (\Delta^*)$ is called the {\it Picard-Lefschetz} or
{\it local monodromy} transformation. 

We have the following theorem which is usually attributed independently to P. Griffiths, A. Grothendieck and A. Landman.
(See \cite[Page 106]{DSM} for all but the claim on $k$, and \cite[Theorem I$^\prime$]{AL} for the claim on $k$.)

\begin{theorem}[Local Monodromy Theory]\label{Thm-LMT}
The local monodromy transformation $T_m$ is quasi-unipotent, with index of unipotency at most $m$. In other words, there is some
$k$ such that 
\[ (T_m^k - I)^{m+1} = 0.\]
Moreover, if the central fibre $\XC_0$ is a (possibly non-reduced) divisor $\sum_i k_i \XC_{0,i}$ with
the $\XC_{0,i}$ smooth hypersurfaces on $\XC$ meeting transversally, then one may take
$k = \lcm \{k_i\}$. In particular, if $\pi: \XC \rightarrow \Delta$ is semistable then $T$ is unipotent ($k = 1$).
\end {theorem}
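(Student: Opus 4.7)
The plan is to reduce to the semistable case via the Semistable Reduction theorem just stated, and then analyse the semistable case directly. Given the degeneration $\pi: \XC \rightarrow \Delta$, semistable reduction produces some $e_0 \geq 1$, the base change $t \mapsto t^{e_0}$, and a semistable model $\psi: \YC \rightarrow \Delta$ bimeromorphic to the pulled-back family $\XC_b$ away from the central fibre. Since a bimeromorphic modification of the central fibre does not change the $\pi_1(\Delta^*)$-action on the cohomology of a smooth nearby fibre, the monodromy of $\psi$ on $H^m$ is exactly $T_m^{e_0}$. Hence the two main assertions reduce to the statement that for a semistable degeneration the monodromy is unipotent with $(T_m - I)^{m+1} = 0$.

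For a semistable $\psi: \YC \rightarrow \Delta$, I would follow Clemens--Schmid--Steenbrink. The nearby fibre $\YC_t$ retracts onto the central NCD $\YC_0$, and the Steenbrink--Mayer--Vietoris spectral sequence has $E_1^{p,q}$-terms that are direct sums of cohomologies of the intersection strata of $\YC_0$; the monodromy acts trivially on $E_1$, so it is unipotent on the abutment $H^m(\YC_t)$. The sharp bound $(T_m - I)^{m+1} = 0$ comes from Steenbrink's limit mixed Hodge structure: $H^m(\YC_t)$ carries a weight filtration $W_\bullet$ concentrated in degrees $0, \ldots, 2m$, and the logarithm $N := \log T_m$ (well defined by the unipotency just obtained) is a morphism of mixed Hodge structures of type $(-1,-1)$, so $N(W_k) \subseteq W_{k-2}$. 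Iterating $m+1$ times lands below $W_0$, giving $N^{m+1} = 0$ and hence $(T_m - I)^{m+1} = 0$. Combined with the first paragraph this gives quasi-unipotency with the stated index bound.

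For the refinement that one may take $k = \lcm\{k_i\}$ when $\XC_0 = \sum_i k_i \XC_{0,i}$ is already a (possibly non-reduced) normal crossings divisor, an explicit local calculation replaces the general semistable reduction theorem. Near a point lying in $\XC_{0,i_1} \cap \cdots \cap \XC_{0,i_r}$ the map $\pi$ has a local analytic model $t = x_1^{k_{i_1}} \cdots x_r^{k_{i_r}}$ (after absorbing a unit into the coordinates). Setting $e := \lcm\{k_i\}$ and substituting $t = s^e$ produces $s^e = x_1^{k_{i_1}} \cdots x_r^{k_{i_r}}$ with every exponent dividing $e$; this toric singularity is resolved by normalisation followed by iterated toric blow-ups into a semistable local model, and globally one obtains a semistable $\psi: \YC \rightarrow \Delta$ for this specific $e$. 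Applying the semistable case to $\psi$ then yields $(T_m^k - I)^{m+1} = 0$ with $k = \lcm\{k_i\}$, specialising to unipotency when the original $\pi$ is already semistable. The main obstacle in the whole argument is the second paragraph: the sharp index $m+1$ rests on the substantial input of Steenbrink's limit mixed Hodge structure (or equivalently Schmid's nilpotent orbit theorem together with a Clemens-style topological analysis), whereas the reduction to the semistable case and the explicit toric local model are by comparison straightforward.
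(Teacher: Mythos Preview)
The paper does not prove this theorem at all: it is quoted as a classical result, with the reference ``See \cite[Page 106]{DSM} for all but the claim on $k$, and \cite[Theorem I$^\prime$]{AL} for the claim on $k$.'' So there is no proof in the paper to compare against; I can only comment on your outline on its own merits.

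Your overall architecture is sound and is one of the standard routes: reduce to the semistable case (either by the general Semistable Reduction theorem, or, for the sharper statement on $k$, by the explicit toric local resolution when the central fibre is already normal crossings with multiplicities), and then in the semistable case read off unipotency and the index bound $m+1$ from Steenbrink's limit mixed Hodge structure. The observation that base change $t\mapsto t^e$ replaces $T_m$ by $T_m^e$, and that a bimeromorphic modification supported on the central fibre does not alter the monodromy on a nearby fibre, are the right bridging facts. It is worth noting that historically the monodromy theorem came first and Steenbrink's construction later, but logically your order is fine: Steenbrink's bigraded complex in the semistable case is built directly from the strata of the normal crossings fibre and yields $N^{m+1}=0$ without presupposing it.

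One statement you make is simply false and should be removed: the nearby fibre $\YC_t$ does \emph{not} retract onto the central fibre $\YC_0$. (If it did, $H^*(\YC_t)\cong H^*(\YC_0)$ and the monodromy would be trivial, not merely unipotent.) What is true is that the total space $\YC$ deformation retracts onto $\YC_0$, giving a specialisation map $H^*(\YC_0)\to H^*(\YC_t)$. Your argument does not actually use the mistaken retraction claim, since you immediately pass to the Steenbrink spectral sequence, so this is a slip rather than a gap; but it should be corrected. Also, the sentence ``the monodromy acts trivially on $E_1$, so it is unipotent on the abutment'' is a little glib: in Steenbrink's set-up $N$ is built into the double complex as an explicit degree-shifting map, and both unipotency and the bound $N^{m+1}=0$ are read off from the finite range of the bigrading rather than from a naive ``trivial on $E_1$'' argument. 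If you want to cite a source, Landman's paper \cite{AL} gives the $k=\lcm\{k_i\}$ statement directly, and Schmid's nilpotent orbit theorem or Steenbrink's construction gives the sharp index in the semistable case.
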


Note that by blowing up subvarieties of the central fibre, one may always replace the original degeneration
with one in which the central fibre is a (possibly non-reduced) union of smooth hypersurfaces crossing transversally without
changing the family outside of the central fibre. 

Let $\pi : \XC \rightarrow \Delta$ be a semistable degeneration of fibre dimension $n$, and $T_m$ the local
monodromy transformation on $H^m(\XC_t)$ for some $0 \leq m \leq 2n$. 
Since here the monodromy is unipotent, we can define the
logarithm $H^m(\XC_t) \stackrel{N_m}{\rightarrow} H^m(\XC_t)$ by the finite sum
\[ N_m  :=  \log (T_m) = (T_m - I) - \frac{1}{2} (T_m - I)^2 + \frac{1}{3} (T_m - I)^3 - \cdots.\]
The homology and cohomology groups
with rational coefficients of the central fibre $\XC_0$ and the chosen fibre $\XC_t$ ($t \ne 0$) carry canonical mixed
Hodge structures \cite[Pages 114-116]{DM}. These fit together in an elaborate exact sequence. 

\begin{theorem}[Clemens-Schmidt sequence]\label{Thm-CS}
Let $\pi :\XC \rightarrow \Delta$ be a semistable degeneration and $N_m$ the logarithm of local monodromy on
$H^m(\XC_t)$ for some $t \ne 0$.
There exists an exact sequence of mixed Hodge structures
\[ \cdots \rightarrow H_{2n + 2 -m} (\XC_0) \rightarrow H^m (\XC_0) \rightarrow H^m (\XC_t) \stackrel{N_m}{\rightarrow} H^m(\XC_t) \]
\[ \rightarrow
H_{2n-m}(\XC_0) \rightarrow H^{m+2}(\XC_0) \rightarrow H^{m+2} (\XC_t) \stackrel{N_{m+2}}{\rightarrow} H^{m+2}(\XC_t)
\rightarrow \cdots.\]
\end{theorem}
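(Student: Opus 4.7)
The plan is to derive the sequence by splicing together two more elementary long exact sequences: the Wang sequence for the $C^\infty$ fibration $\XC^\ast \to \Delta^\ast$, which supplies the monodromy operator $N_m$, and the long exact sequence of the pair $(\XC, \XC^\ast)$, which produces the homology of $\XC_0$ via Lefschetz duality. The semistability hypothesis enters in two essential places: it makes the local monodromy $T_m$ unipotent by Theorem~\ref{Thm-LMT}, so that $N_m := \log T_m$ is a well-defined nilpotent operator sharing the kernel and image of $T_m - I$; and it ensures that $\XC$ admits a deformation retraction onto its central fibre $\XC_0$, whence $H^m(\XC) \cong H^m(\XC_0)$.

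First I would use that $\Delta^\ast$ is homotopy equivalent to $S^1$ and that $\XC^\ast \to \Delta^\ast$ is a locally trivial $C^\infty$ fibration; the associated Wang sequence reads
\[ \cdots \to H^{m-1}(\XC_t) \xrightarrow{T_{m-1}-I} H^{m-1}(\XC_t) \to H^m(\XC^\ast) \to H^m(\XC_t) \xrightarrow{T_m-I} H^m(\XC_t) \to \cdots, \]
and replacing $T_m - I$ by $N_m$ (legitimate since the two share kernel and image in the unipotent setting) extracts the ``$N_m$-portion'' of the Clemens-Schmidt sequence. Next, I would combine the pair sequence for $(\XC,\XC^\ast)$ with the retraction $H^\ast(\XC) \cong H^\ast(\XC_0)$ and with Poincar\'e-Lefschetz duality $H^m(\XC,\XC^\ast) \cong H_{2n+2-m}(\XC_0)$ (valid because $\XC$ is smooth of real dimension $2n+2$ and $\XC_0$ is compact), yielding
\[ \cdots \to H_{2n+2-m}(\XC_0) \to H^m(\XC_0) \to H^m(\XC^\ast) \to H_{2n+1-m}(\XC_0) \to \cdots. \]
Threading these two sequences together along the common groups $H^\ast(\XC^\ast)$ yields the desired six-term pattern as a sequence of $\Q$-vector spaces; the only moderately delicate bookkeeping is to identify the connecting map $H^m(\XC_t) \to H_{2n-m}(\XC_0)$ as the composition of the Wang coboundary $H^m(\XC_t) \to H^{m+1}(\XC^\ast)$ with the pair-sequence map $H^{m+1}(\XC^\ast) \to H_{2n-m}(\XC_0)$, which is a routine diagram chase.

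The hard part, and what makes this a theorem rather than a formal exercise, is to upgrade this rational exact sequence to one of mixed Hodge structures. This demands, first, that each term carries a canonical MHS: the groups $H^\ast(\XC_0)$ and $H_\ast(\XC_0)$ are handled by Deligne's construction applied to the simplicial resolution built from the strata of the normal crossings divisor $\XC_0$, while the fibre cohomology $H^m(\XC_t)$ must be endowed with Schmid's limit mixed Hodge structure, in which the weight filtration is governed by the monodromy filtration of $N_m$ shifted by $m$. Second, one must verify that the specialisation map, the monodromy logarithm, and the connecting maps are all morphisms of MHS of the correct types (for instance $N_m$ should be of type $(-1,-1)$). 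I would carry out this verification by replacing the topological approach above with Steenbrink's sheaf-theoretic realisation, in which both the limit MHS on $H^m(\XC_t)$ and the MHS on $H^\ast(\XC_0)$ arise from compatible weight filtrations on a single explicit complex of sheaves (his ``double complex''); the Clemens-Schmidt sequence then appears as a short exact sequence of filtered complexes, and all compatibilities are visible from the construction. The principal obstacle is therefore not the existence of the sequence but the identification of the weight and Hodge filtrations --- the local invariant cycle theorem, i.e.\ exactness in the middle, is in particular already a non-trivial Hodge-theoretic statement and is exactly the analogue the author conjectures to hold in the rigid setting.
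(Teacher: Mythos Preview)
The paper does not prove this theorem; it is quoted as a classical result from the complex-analytic literature, with the maps attributed to \cite[Pages 108--109]{DM} and the surrounding mixed Hodge theory to \cite[Pages 114--116]{DM} and \cite{DSM}. So there is no ``paper's own proof'' to compare against.

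That said, your sketch is the standard derivation and is essentially what Morrison presents in \cite{DSM}. The splicing of the Wang sequence for the fibration over $\Delta^\ast$ with the pair sequence for $(\XC,\XC^\ast)$, together with the retraction $\XC \simeq \XC_0$ and Lefschetz duality $H^m(\XC,\XC^\ast) \cong H_{2n+2-m}(\XC_0)$, is exactly how the rational sequence is assembled; and you are right that the substantive content lies in endowing the fibre cohomology with Schmid's limit MHS and checking compatibility via Steenbrink's filtered complex. One small point worth tightening: the replacement of $T_m - I$ by $N_m$ in the Wang sequence is more than a sharing of kernel and image --- one needs $N_m$ itself to be a morphism of MHS of type $(-1,-1)$, and this is precisely what Schmid's $SL_2$-orbit theorem (or Steenbrink's construction) provides, rather than being a formal consequence of unipotence. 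Otherwise your outline is sound and matches the cited sources.
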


All the maps in this exact sequence are described explicitly on \cite[Pages 108-109]{DM}. 

\subsection{Varieties over finite fields}

The material in Sections \ref{Sec-SSR} and \ref{Sec-LMEC} follows closely the
presentation in \cite{Illusie}. Beware that the ``$K$'' in these sections need not be the $p$-adic
field from Section \ref{Sec-Charp}. Indeed, we are interested in geometric rather than arithmetic degenerations, and
so in our application $K$ is equicharacteristic.

\subsubsection{Semistable reduction}\label{Sec-SSR}

Let $R$ be a henselian discrete valuation ring with field of fractions $K$ and
residue field $k$ of characteristic $p > 0$. Choose an algebraic closure $\bar{K}$ of
$K$, and denote by $\bar{R}$ the normalisation of $R$ in $\bar{K}$, and
$\bar{k}$ the residue field of $\bar{R}$ (which is an algebraic closure of $k$).

Let $S := \Spec(R)$, $s := \Spec(k)$, $\bar{s} := \Spec(\bar{k})$,
$\eta := \Spec(K)$, $\bar{\eta} := \Spec(\bar{K})$, and $f: X \rightarrow S$ be
a proper morphism. We say that $f$ is semistable if
locally for the \'{e}tale topology $X$ is $S$-isomorphic to 
$S[t_1,\cdots,t_n]/(t_1 \dots t_r - \pi)$ where $\pi$ is a uniformizer on $R$. This
implies that the generic fibre $X_\eta$ is smooth, $X$ is regular, and
the special fibre $X_s$ is a divisor with normal crossings on $X$.

The following fundamental conjecture is known to be true at least when $X_\eta$ is a smooth projective curve.

\begin{conjecture}[Semistable reduction]\label{Conj-GSS}
If $X_\eta$ is smooth over $\eta$ then there exists a finite extension $\eta^\prime$
of $\eta$ such that $X_{\eta^\prime}$ admits a proper and semistable model $X^\prime \rightarrow S^\prime$ on the
normalisation $S^\prime$ of $S$ in $\eta^\prime$.
\end{conjecture}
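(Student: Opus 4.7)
The plan is to organise the argument around the deepest available input, Grothendieck's semistable reduction theorem for abelian varieties: for any abelian variety $A/\eta$ and any prime $\ell \ne p$, the action of inertia on $T_\ell(A)$ becomes unipotent after a finite base change of $\eta$, and via the N\'eron model this forces $A$ itself to become semistable over the corresponding normalisation of $S$. I would take this as a black box and reduce the conjecture, whenever possible, to a statement about suitable abelian varieties built from $X_\eta$.

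For the known case $\dim X_\eta = 1$ I would follow Deligne-Mumford. Reduce to $X_\eta$ a smooth geometrically connected projective curve of genus $g$; the cases $g = 0, 1$ are trivial or handled by Tate's algorithm. For $g \geq 2$, apply Grothendieck to the Jacobian $J = \mathrm{Pic}^0_{X_\eta/\eta}$ to find a finite extension $\eta'/\eta$ over which $J$ has semistable reduction on the normalisation $S'$ of $S$. The stable model $X^{\mathrm{st}} \to S'$ is then constructed as the closure of $X_{\eta'}$ under a pluricanonical embedding by sections of $\omega_{X_{\eta'}/\eta'}^{\otimes 3}$, and the N\'eron model of $J$ over $S'$ controls its special fibre, forcing it to be a reduced nodal curve. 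A further sequence of blow-ups separating the two branches at each node produces a model which is semistable in the sense required here.

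In residue characteristic zero and higher dimension I would instead invoke Hironaka's embedded resolution of singularities, which produces a regular model of $X_\eta$ over $S$ whose special fibre is a strict normal crossings divisor $\sum_i k_i D_i$. After the base change $t \mapsto t^e$ for $e = \lcm\{k_i\}$ the family becomes \'etale-locally of the form $R[t_1,\ldots,t_n]/(t_1^{a_1}\cdots t_r^{a_r}-\pi)$, and the toroidal semistable reduction theorem of Kempf-Knudsen-Mumford-Saint-Donat --- combinatorial regular subdivision of the associated fan --- converts this into a genuine semistable model. The hard part, and the reason the statement is only a conjecture, is positive or mixed residue characteristic with $\dim X_\eta \geq 2$: resolution of singularities is not available in that generality, so the first step above fails. de Jong's alteration theorem gives a weaker replacement, producing only a proper, \emph{generically} finite (not necessarily finite) map from a semistable scheme, and promoting such an alteration to an honest finite base change of $S$ is exactly what the conjecture demands and what appears to remain genuinely open.
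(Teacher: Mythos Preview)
The statement you were asked to address is labelled a \emph{Conjecture} in the paper, and the paper offers no proof; it merely remarks that the conjecture ``is known to be true at least when $X_\eta$ is a smooth projective curve.'' So there is no proof in the paper to compare your proposal against.

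That said, your write-up is an accurate and well-organised survey of the status of the conjecture rather than a proof, and you are right to present it that way. Your account of the curve case via Deligne--Mumford and Grothendieck's theorem on abelian varieties is the standard one and matches what the paper alludes to. Your paragraph on residue characteristic zero (Hironaka plus the toroidal machinery of Kempf--Knudsen--Mumford--Saint-Donat) is correct but, note, lies strictly outside the hypotheses of the paper's statement, which fixes the residue characteristic to be $p>0$; it is useful context but not part of proving \emph{this} conjecture. Your final paragraph correctly identifies the genuine obstruction: in residue characteristic $p>0$ and relative dimension $\geq 2$, resolution of singularities is unavailable, and de Jong's alterations give only a generically finite cover rather than a finite base change of $\eta$. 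That is exactly why the paper records the statement as a conjecture and does not attempt to prove it.

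In short: there is no gap in your reasoning, because you have (appropriately) not claimed a proof; your summary is sound and agrees with the paper's own assessment.
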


\subsubsection{Local monodromy in \'{e}tale cohomology}\label{Sec-LMEC}

Let $I$ be the inertia group, defined by the exact sequence
\[ 1 \rightarrow I \rightarrow \Gal(\bar{K}/K) \rightarrow \Gal(\bar{k}/k) \rightarrow 1.\]
Write $G := \Gal(\bar{K}/K)$ and let $\rho: G \rightarrow \GL (V)$ be
a continuous representation of $G$ to a finite-dimensional $\Q_{\ell}$-vector space ($\ell \ne p$).
We say that the representation is quasi-unipotent if there is an open
subgroup $I_1$ of $I$ such that the restriction to $I_1$ of $\rho$ is unipotent, i.e.
$\rho(g)$ is unipotent for all $g \in I_1$.

Let $f: X \rightarrow S$ be a proper morphism, and $H^m(X_{\bar{\eta}},\Q_{\ell})$ denote
\'{e}tale cohomology with coefficients in $\Q_{\ell}$ of $X_{\bar{\eta}}$. Then there exists a 
local monodromy representation

\begin{equation}\label{Eqn-rho}
 \rho_m: G \rightarrow \GL\, H^m(X_{\bar{\eta}},\Q_{\ell}).
 \end{equation}

We have following following analogues of Theorem \ref{Thm-LMT}.

\begin{theorem}\cite[Corollaire 3.3]{Illusie}
The local monodromy representation on \linebreak $H^m(X_{\bar{\eta}},\Q_{\ell})$ for a semistable
degeneration $f : X \rightarrow S$ is unipotent. More precisely, for $g \in I$ we have
$(\rho_m(g) - 1)^{m+1} = 0$.
\end{theorem}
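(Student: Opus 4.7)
The plan is to combine three ingredients: triviality of wild inertia on the nearby cycles sheaves, the Rapoport-Zink / Steenbrink explicit description of those sheaves in the semistable case, and the nearby cycles spectral sequence abutting to $H^\bullet(X_{\bar\eta},\Q_\ell)$.

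First I would prove that for a semistable $f : X \to S$ the action of $I$ on the nearby cycles complex $R\Psi_f \Q_\ell$ factors through the tame quotient $I_t = I/P$. The local \'etale model $\Spec(R[t_0,\ldots,t_n]/(t_0 \cdots t_r - \pi))$ becomes smooth (after blowing up) over an appropriate tame base change of $S$, so the wild inertia $P \subset I$ acts trivially on the stalks of $R^q\Psi_f \Q_\ell$. Equivalently, a semistable scheme is log-smooth over $S$ with its canonical log structure, which is precisely what is needed to kill wild ramification on $R\Psi_f \Q_\ell$.

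Next I would invoke the Rapoport-Zink description of $R^q\Psi_f \Q_\ell$ via the strata of the special fibre. Writing $Y = X_s = \bigcup_i Y_i$ and $a_k : Y^{(k)} \to Y$ for the natural map from the disjoint union of the normalised $k$-fold intersections, one has canonical $I_t$-equivariant isomorphisms
\[ R^q\Psi_f \Q_\ell \;\cong\; \bigwedge^q \bigl(R^1\Psi_f \Q_\ell\bigr), \]
with $R^1\Psi_f\Q_\ell$ computed explicitly from $a_{1,*}\Q_\ell$, together with an explicit monodromy logarithm $N : R^q\Psi_f \Q_\ell \to R^{q+1}\Psi_f \Q_\ell(-1)$ that raises the stratum-codimension level by one. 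In particular $I_t$ acts unipotently on each $R^q\Psi_f \Q_\ell$.

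Finally I would feed this into the $I$-equivariant spectral sequence
\[ E_2^{p,q} \;=\; H^p(Y_{\bar s}, R^q\Psi_f \Q_\ell) \;\Longrightarrow\; H^{p+q}(X_{\bar\eta}, \Q_\ell). \]
In total degree $m$ only the strip $0 \le q \le m$ contributes to the abutment. Since $N$ shifts $q$ by $+1$, iterating $N$ a total of $m+1$ times annihilates every associated graded piece of the induced filtration on $H^m(X_{\bar\eta},\Q_\ell)$, and hence annihilates $H^m(X_{\bar\eta},\Q_\ell)$ itself. For $g \in I$ the operator $\rho_m(g) - 1$ lies in the nilpotent ideal generated by $N$ (via the exponential relation $\rho_m(\sigma) = \exp(t_\ell(\sigma) N)$ for $\sigma \in I_t$), so $(\rho_m(g)-1)^{m+1} = 0$. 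The hard part will be establishing the Rapoport-Zink identifications and the precise form of the monodromy operator on $R\Psi_f \Q_\ell$ in characteristic $p$; once those are in hand, the nilpotency bound $m+1$ is essentially bookkeeping in the spectral sequence.
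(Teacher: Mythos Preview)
The paper does not prove this theorem; it is quoted verbatim as \cite[Corollaire 3.3]{Illusie} with no argument supplied. So there is no ``paper's own proof'' to compare against. Your outline is essentially the standard argument that Illusie records, but two points are garbled and one is circular.

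First, the crucial local fact from Rapoport--Zink (Illusie's Th\'eor\`eme~3.2) is stronger than what you wrote: the inertia group $I$ acts \emph{trivially} on each cohomology sheaf $R^q\Psi_f\Q_\ell$, not merely unipotently. This is what makes the spectral-sequence step work. With trivial action on the sheaves, $I$ acts trivially on every $E_2^{p,q}=H^p(Y_{\bar s},R^q\Psi_f\Q_\ell)$ and hence on every $E_\infty^{p,q}$; therefore $(\rho_m(g)-1)$ sends each step of the abutment filtration on $H^m(X_{\bar\eta},\Q_\ell)$ into the next, and since only the indices $0\le q\le m$ contribute one gets $(\rho_m(g)-1)^{m+1}=0$. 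Your version, with ``$I_t$ acts unipotently on each $R^q\Psi_f\Q_\ell$'', would not by itself yield the sharp bound $m+1$.

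Second, your description of the monodromy logarithm as a map $N:R^q\Psi_f\Q_\ell\to R^{q+1}\Psi_f\Q_\ell(-1)$ raising $q$ is not correct: $N$ is an endomorphism (with Tate twist) of the complex $R\Psi_f\Q_\ell$, and on the individual cohomology sheaves it is zero in the semistable case (consistent with the trivial $I$-action just mentioned). You may be conflating this with the shift operator in the Steenbrink/Rapoport--Zink bicomplex, which lives at a different level.

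Third, your final sentence is circular: you invoke the relation $\rho_m(\sigma)=\exp(t_\ell(\sigma)N)$ to deduce nilpotency of $\rho_m(g)-1$, but the existence of a nilpotent $N$ with this property is exactly what is being proved. The logic runs the other way: one first establishes $(\rho_m(g)-1)^{m+1}=0$ via the filtration argument, and only then defines $N$ as the logarithm.

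With these corrections your plan becomes the standard proof.
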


\begin{corollary}\cite[Corollaire 3.4]{Illusie}\label{Cor-GLMT}
Let $f: X \rightarrow S$ be a proper morphism with $X_{\eta}$ smooth over $\eta$.
Assuming geometric semistable reduction (Conjecture \ref{Conj-GSS}), the local monodromy
representation on $H^m(X_{\bar{\eta}},\Q_{\ell})$ is quasi-unipotent.
\end{corollary}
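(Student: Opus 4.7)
The plan is to reduce to the semistable case via the conjectural semistable reduction theorem and then transfer unipotency along the base change. First I would invoke Conjecture \ref{Conj-GSS}: since $X_{\eta}$ is smooth, there is a finite extension $\eta^\prime/\eta$, with normalisation $S^\prime \to S$ in $\eta^\prime$, such that $X_{\eta^\prime}$ admits a proper semistable model $f^\prime: X^\prime \to S^\prime$. Let $K^\prime$ be the fraction field of $R^\prime := \Gamma(S^\prime,\mathcal{O})$, and $I^\prime := \Gal(\bar{K}/K^\prime) \subseteq I$. Because $K^\prime/K$ is a finite extension, $I^\prime$ is an open subgroup of $I$.

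Next I would identify the restricted Galois representation. Since $X^\prime$ and $X \times_S S^\prime$ have the same generic fibre $X_{\eta^\prime}$, proper base change (applied to the pullback of $X_{\bar\eta}$ along $\bar\eta \to \bar\eta^\prime$, which is an isomorphism of schemes once we observe $\bar{K}$ is an algebraic closure of $K^\prime$ as well) gives a canonical $G$-equivariant isomorphism
\[ H^m(X_{\bar\eta},\Q_\ell) \;\cong\; H^m(X^\prime_{\bar{\eta}^\prime},\Q_\ell) \]
of $I^\prime$-modules, where the right-hand side carries the local monodromy representation attached to the semistable morphism $f^\prime$.

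Now I would apply the preceding theorem (\cite[Corollaire 3.3]{Illusie}) to the proper semistable morphism $f^\prime$: this yields that the action of $I^\prime$ on $H^m(X^\prime_{\bar{\eta}^\prime},\Q_\ell)$ is unipotent, indeed satisfies $(\rho_m(g) - 1)^{m+1} = 0$ for all $g \in I^\prime$. Transporting this through the isomorphism above shows that $\rho_m|_{I^\prime}$ is unipotent. Since $I^\prime$ is an open subgroup of $I$, this is exactly the definition of quasi-unipotence of $\rho_m$, completing the proof.

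The only genuinely delicate point is the compatibility in the second step: one must check that the identification of cohomologies is $I^\prime$-equivariant, i.e.\ that the local monodromy attached to $f$ (restricted to $I^\prime$) agrees with the local monodromy attached to $f^\prime$. This is where I expect the main effort; it follows from the fact that $X^\prime$ and $X \times_S S^\prime$ have a common generic fibre together with proper (and smooth, on the generic fibre) base change for $R^i f_{\eta,*} \Q_\ell$, but one has to be a little careful about identifying the geometric generic points of $S$ and $S^\prime$ via a compatible choice of embedding $\bar{K} \hookleftarrow K^\prime$. Everything else is essentially formal.
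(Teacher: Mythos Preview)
The paper does not supply its own proof of this corollary; it is simply quoted from \cite[Corollaire 3.4]{Illusie}, followed by a note observing that Grothendieck's general result on $\ell$-adic representations \cite[Th\'{e}or\`{e}me 1.2]{Illusie} gives an alternative route (over finite residue fields) that avoids semistable reduction. Your argument is the standard deduction of the corollary from the preceding theorem via the semistable reduction conjecture, and it is correct in outline.

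One small slip worth fixing: you set $I^\prime := \Gal(\bar{K}/K^\prime)$ and claim $I^\prime \subseteq I$. That is not true in general, since $\Gal(\bar{K}/K^\prime)$ is the full decomposition group $G^\prime$ for $K^\prime$, not its inertia subgroup. What you want is the inertia group of $K^\prime$, namely $I^\prime := G^\prime \cap I$; this is indeed an open subgroup of $I$ because $G^\prime$ has finite index in $G$. With that correction the rest of your argument goes through unchanged: the preceding theorem gives unipotence of $\rho_m$ on $I^\prime$, and openness of $I^\prime$ in $I$ is precisely quasi-unipotence. Your caution about the $I^\prime$-equivariance of the cohomology identification is well placed but not really difficult here, since the monodromy representation depends only on the generic fibre $X_\eta$ (it is the action on $H^m(X_{\bar\eta},\Q_\ell)$ via the lisse sheaf $R^m f_{\eta,*}\Q_\ell$ on $\eta$), and $X^\prime_{\eta^\prime} = X_{\eta^\prime}$ by construction.
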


\begin{note}
Grothendieck proved that any $\ell$-adic representation of $G$ is quasi-unipotent, under a mild
hypothesis on $k$ \cite[Th\'{e}or\`{e}me 1.2]{Illusie}. In the case in which $k$ is a finite field, this gives a proof of Corollary
\ref{Cor-GLMT} which does not need geometric semistable reduction.
\end{note}

Let $f: X \rightarrow S$ be a semistable degeneration and $\rho_m$ the associated monodromy
representation (\ref{Eqn-rho}).
There exists a homomorphism $t_{\ell}: I \rightarrow \Z_{\ell}(1)$ and a unique nilpotent morphism
\[ N_m: H^m(X_{\bar{\eta}},\Q_{\ell})(1) \rightarrow H^m(X_{\bar{\eta}},\Q_{\ell})\]
such that
\[ \rho_m(g) = \exp(N_m t_{\ell}(g)) \mbox{ for all $g \in I$}.\]
Assume now that $k$ is finite, and $S$ is the henselisation of a closed point on a smooth curve over $k$.
(These conditions ensure in particular that the weight-monodromy conjecture holds true, see \cite[Pages 41-42]{Illusie}.)
The following is the analogue in $\ell$-adic \'{e}tale cohomology of one piece of the Clemens-Schmidt sequence.

\begin{theorem}[Local invariant cycle theorem]\cite[Corollaire 3.11]{Illusie}
For all $m$ there exists an exact sequence
\[ H^m(X_{\bar{s}},\Q_{\ell}) \rightarrow H^m(X_{\bar{\eta}},\Q_{\ell}) \stackrel{N_m}{\rightarrow} H^m(X_{\bar{\eta}},\Q_{\ell})(-1).\]
\end{theorem}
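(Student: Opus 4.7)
The plan is to realize the sequence via the nearby cycles functor $R\Psi$ on $X_{\bar{s}}$ and then deduce exactness from a weight argument, using the weight-monodromy conjecture, which by hypothesis holds in the present setting ($k$ finite, $S$ the henselisation of a closed point on a smooth curve). By proper base change for $f$, one identifies $H^m(X_{\bar{\eta}},\Q_\ell) = H^m(X_{\bar{s}}, R\Psi \Q_\ell)$. The canonical adjunction $\Q_\ell \to R\Psi \Q_\ell$ on $X_{\bar{s}}$ induces the first arrow of the sequence, while $N_m$ comes from the morphism on $R\Psi \Q_\ell$ built from the tame character $t_\ell : I \to \Z_\ell(1)$ and the $I$-action on the nearby cycles complex.

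Vanishing of the composition is the easy half. The constant sheaf $\Q_\ell$ on $X_{\bar{s}}$ carries trivial inertia action, so any class in the image of $H^m(X_{\bar{s}}, \Q_\ell)$ is already $I$-invariant, hence in particular annihilated by $N_m$.

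For exactness at the middle term I would proceed in two stages. First, by Deligne's purity theorem the cohomology $H^m(X_{\bar{s}}, \Q_\ell)$ of the proper $k$-scheme $X_{\bar{s}}$ has weights $\leq m$, so its image in $H^m(X_{\bar{\eta}}, \Q_\ell)$ lies in the weight-$\leq m$ part $W_m$ of the monodromy weight filtration $W_\bullet$. On the other hand, the weight-monodromy conjecture in this geometric setting identifies $\ker N_m$ with $W_m$. Thus the image is contained in $\ker N_m$, and it remains to show equality.

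That equality I would establish by a dimension count using the Rapoport--Zink weight spectral sequence for $R\Psi \Q_\ell$, after first reducing to a semistable model (using the curve case of semistable reduction, or alternatively a de Jong alteration). Its $E_1$-terms are cohomologies of intersections of irreducible components of $X_{\bar{s}}$ twisted appropriately; they are pure by Deligne, so the sequence degenerates at $E_2$ and gives an explicit description of the graded pieces $\mathrm{gr}^W_i H^m(X_{\bar{\eta}}, \Q_\ell)$. A parallel analysis of the map $H^m(X_{\bar{s}}, \Q_\ell) \to H^m(X_{\bar{s}}, R\Psi \Q_\ell)$ identifies its image with $W_m$. The hard part will be this final bookkeeping in the weight spectral sequence: one must verify stratum-by-stratum on $X_{\bar{s}}$ that each contribution to $W_m$ comes from a class pulled back from $H^m(X_{\bar{s}}, \Q_\ell)$, and it is precisely at the boundary $W_m = \ker N_m$ (rather than, say, $W_{m-1}$) that the weight-monodromy hypothesis enters essentially.
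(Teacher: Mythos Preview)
The paper does not give its own proof of this statement: it is quoted as \cite[Corollaire 3.11]{Illusie} with no accompanying argument. There is therefore nothing in the paper against which to compare your proposal.

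Your overall strategy---nearby cycles, the adjunction $\Q_\ell \to R\Psi\Q_\ell$, and the Rapoport--Zink weight spectral sequence---is the standard one and is essentially how the result is proved in Illusie's expos\'{e}. One step, however, is mis-stated. The weight-monodromy conjecture does \emph{not} identify $\ker N_m$ with $W_m$. It identifies the weight filtration with the shifted monodromy filtration $M_\bullet$, so that $W_m = M_0$; but $\ker N_m$ is in general a proper subspace of $M_0$ (for a single Jordan block of size $3$, $\dim M_0 = 2$ while $\dim\ker N = 1$). Correspondingly, your later claim that the image of $H^m(X_{\bar{s}},\Q_\ell)$ equals $W_m$ is also off: the image is exactly $\ker N_m$, not all of $W_m$. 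The cleaner way to organise the argument is to note that, since the inertia action is unipotent, $\ker N_m$ coincides with the $I$-invariants $H^m(X_{\bar{\eta}},\Q_\ell)^I$ directly from $\rho(g) = \exp(N_m t_\ell(g))$; the content of the theorem is then the surjectivity of the specialisation map onto these invariants, and that is where purity and the weight spectral sequence genuinely enter.
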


\subsubsection{Rigid cohomology}\label{Sec-CSRC}

There is not such a coherently worked out picture for the rigid (or crystalline) cohomology of a degeneration.
See \cite{Bruno,Mieda} for some partial results for rigid cohomology (and the references therein for crystalline
cohomology). By analogy though the following suggests itself.

Let $\Xp \rightarrow \Sp \subseteq \Aff^1_{\sfq}$ 
be a morphism of varieties over $\fq$ which has a good lift around $t = 0$
(Definition \ref{Def-GL}). Then the limiting Frobenius structure $(\HH_0,\N_0,\Fr_0,e)$  for
$\XW \rightarrow \SW$ in dimension $m$ around $t = 0$ exists and is unique (Theorem
\ref{Thm-LFSexist} and Lemma \ref{Lem-LFSunique}). We shall write $(\HH_0^m,\N_0,\Fr_0,e_m)$
when we wish to indicate the dependence of the limiting Frobenius structure on $m$.

Recall here that $\SW = \Spec(\W[t,1/\Delta(t)])$ for
some polynomial $\Delta(t)$ which has no zeros in $\bar{\K}$ within the
punctured $p$-adic open unit disk around zero but may vanish at
$t = 0$. 
It is interesting to note that if $\Delta(0) = 0 \bmod{p}$ (that is, $0 \not \in \Sp$), then
we must have $\Delta(0) = 0$. (This follows by looking at Newton polygons since $\Delta(t)$ is not identically
zero modulo $p$ (for otherwise $\Sp$ is empty) and therefore $\Delta(t)$ contains a root in the $p$-adic open unit disk.)

\begin{definition}\label{Def-SSex}
Let $\Xp \rightarrow \Sp$ be any smooth morphism of 
varieties over $\fq$ where $\Sp \subseteq \Aff^1_{\sfq} \backslash \{0\}$. Let $\Tp:=\Sp \cup \{0\}$ be the variety
obtained by adjoining the origin to $\Sp$. We shall say that $\Xp \rightarrow \Sp$ extends to a semistable degeneration
over the origin if there exists a semistable degeneration $\Yp \rightarrow \Tp$ which restricts
to $\Xp \rightarrow \Sp$ over $\Sp$.
\end{definition}

Here by ``semistable degeneration'' we mean in the sense of Section \ref{Sec-SSR} when one localises 
$\Yp \rightarrow \Tp$ around the origin. In this situation we shall write $\Xp_0$ for the fibre of
$\Yp \rightarrow \Tp$ at the origin and refer to it as the degenerate fibre. Note $\Xp \rightarrow \Sp$ may
extend to a semistable degeneration over the origin in more than one way; for example, one can blow
up smooth points on the degenerate fibre. Let us also define a semistable $\fq$-variety as one which is
\'{e}tale locally isomorphic to $\fq[t_1,\cdots,t_n]/(t_1 \cdots t_r)$ for some $n$ and $r \leq n$; so the
degenerate fibre in a semistable degeneration is semistable.

We recall the definition of a basic object in rigid cohomology.

\begin{definition}
An $F$-isocrystal $M$ over $\K$ is a finite-dimensional $\K$-vector space equipped with an injective
$\sigma$-linear map $F$. For $m \in \Z$, define $M(m)$ to be the $F$-isocrystal with the same underlying
vector space as $M$ but with the
action of $F$ multiplied by $p^{-m}$.
\end{definition} 

The $F$-isocrystal which is associated to the limiting Frobenius structure $(\HH_0^m,\N_0,\Fr_0,e_m)$ is the pair 
$(\HH_0^m,\Fr_0)$.

Define $e := \lcm_{0 \leq m \leq 2n}(e_m)$ where $n$ is the fibre dimension of $\Xp \rightarrow \Sp$.
 One may pull back the base $\XW_{e} \rightarrow \XW_{e_m}$ and the ``smallest''
limiting Frobenius structure $(\HH_0^m,\N_0,\Fr_0,e_m)$ in dimension $m$ is replaced by $(\HH_0^m,(e/e_m)\N_0,\Fr_0,e)$. Note that this has no effect on the kernel and cokernel of the monodromy operator, since
$\N_0$ is multiplied by a non-zero integer, i.e., there is no ambiguity in replacing $e_m$ by $e$.

\begin{conjecture}\label{Conj-CS}
Let $\Xp \rightarrow \Sp \subseteq \Aff^1_{\sfq} \backslash \{0\}$ 
be a morphism of varieties over $\fq$ which has a good lift around $t = 0$ and
denote by $(\HH_0^m,\N_0,\Fr_0,e_m)$ the limiting Frobenius structure at $t = 0$ in dimension $m$. Define
$e := \lcm_{0 \leq m \leq 2n}(e_m)$.
Then the morphism $\Xp_e \rightarrow \Sp_e$ obtained by pulling back the base $\Sp_e \rightarrow \Sp$ extends
to a semistable degeneration over the origin with degenerate fibre $\Xp_{e,0}$. Moreover, for any such extension 
there exists a ``Clemens-Schmidt'' exact sequence of $F$-isocrystals
\[ \cdots \rightarrow H_{2n + 2 -m,rig}(\Xp_{e,0})(-n-1) \rightarrow 
H^m_{rig}(\Xp_{e,0}) \rightarrow \HH_0^m  \stackrel{\N_0}{\rightarrow} \HH_0^m(-1)  \]
\[ \rightarrow H_{2n  -m,rig}(\Xp_{e,0})(-n-1) \rightarrow 
H^{m+2}_{rig}(\Xp_{e,0}) \rightarrow \HH_0^{m+2}  \stackrel{\N_0}{\rightarrow} \HH_0^{m+2}(-1) \rightarrow \cdots  \]
for some covariant ``rigid homology'' functor $H_{j,rig}(\bullet)$ on semistable $\fq$-varieties which vanishes
outside of dimensions $0 \leq j \leq 2n$.
\end{conjecture}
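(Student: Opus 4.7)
The plan has two fairly separate components: first, produce the semistable extension after pulling back by $t \mapsto t^e$; second, produce the exact sequence. I would attack them in that order.

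For the extension, the starting point is that the hypothesis ``good lift around zero'' plus the construction of $e$ via Theorem \ref{Thm-SSR} should match the geometric local monodromy, by the $p$-adic local monodromy theorem (or Kedlaya's semistable reduction theorem for overconvergent $F$-isocrystals). In other words, $e$ is forced on us by the cohomology and it is the right integer on which to hope for a geometric semistable model. The task is then to produce an actual semistable model. I would argue locally at the origin: the pull-back $\XW_e\to\SW_e$ satisfies Assumption (SP), and after completing the base at $s=0$ we may apply either resolution of singularities on the total space (in low dimension, or using de Jong's alteration theorem combined with the fact that the $\ell$-adic monodromy is already unipotent to avoid having to make a further base change) to get a regular model with divisor having relative normal crossings, then contract so that the central fibre becomes a reduced normal crossings divisor. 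The role of $e$ being the lcm of all the $e_m$ is precisely to guarantee that all $\ell$-adic / rigid monodromies are unipotent simultaneously, which is the standard input needed for the Rapoport--Zink style strict-semistable model to exist.

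For the exact sequence, the natural path is through a log-rigid / Hyodo-Kato picture. Choose a semistable model $\Yp_e\to\Tp_e$ and equip $\Yp_{e,0}$ with its natural log structure. Then I would show that the Hyodo-Kato cohomology of the log special fibre, together with its monodromy $\mathbf{N}$ and Frobenius, is naturally isomorphic as a triple to $(\HH_0^m,\N_0,\Fr_0)$. The comparison should be effected by extending scalars from the log-crystalline side to the Robba ring, using the $p$-adic local monodromy theorem to recognise the resulting $(\varphi,\nabla)$-module as the log-extension of the overconvergent $F$-isocrystal $H^m_{rig}(\Xp_e/\Sp_e)$, and then matching the two canonical nilpotent operators via Lemma \ref{Lem-Surprise} and Lemma \ref{Lem-LFSunique}. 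Once the identification is in hand, the exact sequence itself should come from the weight spectral sequence of Mokrane (the $p$-adic analogue of Steenbrink's), applied to $\Yp_{e,0}$ with its stratification by intersections of components: the $E_1$ terms are rigid cohomologies of the smooth strata (twisted by Tate), $\mathbf{N}$ is induced by an explicit map on $E_1$, and the cone of $\mathbf{N}$ computes $H^\bullet_{rig}$ of $\Xp_{e,0}$ itself. The ``rigid homology'' functor $H_{j,rig}(\bullet)$ would be defined on semistable $\fq$-varieties via Poincar\'e-Lefschetz duality for the components and intersections, glued by the same simplicial scheme that feeds the weight spectral sequence; one then checks it vanishes outside $[0,2n]$ for dimensional reasons and fits the exact sequence in the expected place.

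The main obstacle, I expect, is the middle ``local invariant cycle'' exactness $H^m_{rig}(\Xp_{e,0}) \to \HH_0^m \stackrel{\N_0}{\to} \HH_0^m(-1)$. Its two flanks (surjectivity onto $\ker \N_0$, and the map from $\mathrm{coker}\,\N_0$ to the ``homology'' term) are standard consequences of the weight spectral sequence once one has a strict semistable model and the Hyodo-Kato comparison, whereas the middle exactness is essentially the $p$-adic weight-monodromy statement in the geometric equicharacteristic setting, and matching it to the $\N_0$ coming from our analytic construction requires both the Hyodo-Kato comparison over $\Robba$ and that the limiting $\N_0$ from Definition \ref{Def-LFS} coincides, as an element of $\mathrm{End}(\HH_0^m)$, with the monodromy on the log side. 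Everything else --- geometric semistable reduction, construction of $H_{j,rig}$, compatibility of Frobenius twists --- I would expect to assemble from known tools once this central identification is pinned down.
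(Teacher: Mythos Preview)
The statement you are attempting to prove is labelled a \emph{Conjecture} in the paper (Conjecture~\ref{Conj-CS}), and the paper contains no proof of it whatsoever: the author offers only computational evidence (Section~\ref{Sec-Exs}) and an analogy with the complex and $\ell$-adic pictures (Section~\ref{Sec-CS}). So there is nothing to compare your proposal against, and the real question is whether your outline constitutes a proof.

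It does not, and you are candid about this yourself. Several of the ingredients you invoke are either open or not known in the form you need. First, the geometric semistable reduction you appeal to for the first half is itself Conjecture~\ref{Conj-GSS} in the paper, stated as open except for curves; de~Jong's alterations produce an alteration admitting a semistable model, not a semistable model of $\XW_e$ itself, so this does not close the gap. Second, the comparison you want between the analytically defined triple $(\HH_0^m,\N_0,\Fr_0)$ of Definition~\ref{Def-LFS} and the log-crystalline (Hyodo--Kato) cohomology of the special fibre is in the \emph{equicharacteristic} setting (base the henselisation of $\fq[t]$ at the origin), where the Hyodo--Kato machinery and the identification with the Robba-ring extension of $H^m_{rig}(\Xp_e/\Sp_e)$ are not available off the shelf; you would have to build this, and it is the heart of the matter. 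Third, the ``rigid homology'' functor $H_{j,rig}(\bullet)$ is posited in the conjecture precisely because no such functor is known; your suggestion to define it by gluing Poincar\'e duals along the simplicial nerve of the strata is reasonable but is a construction to be carried out and checked, not a citation. Finally, as you note, the local invariant cycle exactness hinges on a $p$-adic weight-monodromy statement that you do not prove.

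In short: your plan identifies the expected architecture (log-rigid cohomology, a Steenbrink/Mokrane weight spectral sequence, weight-monodromy) and is the right shape for an eventual proof, but it is a programme rather than a proof, and the paper makes no claim to have one either.
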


The twistings here are chosen to be consistent with those on the filtered vector spaces \cite[Page 108-109]{DM}, and
of course the equation $\N_0 \Fr_0 = p \Fr_0 \N_0$.

For $n = 1$ and $m$ odd our ``Clemens-Schmidt'' sequence is
\[ 0 \rightarrow H^1_{rig}(\Xp_{e,0}) \rightarrow \HH_0^1 \stackrel{\N_0}{\rightarrow} \HH_0^1(-1) \rightarrow 
H_{1,rig}(\Xp_{e,0})(-2) \rightarrow 0.  \]
Thus according to Conjecture \ref{Conj-CS} the polynomial
\[ \det(1 - T \Fr_0^{\log_p(q)} | \Ker(\N_0)) \]
is the numerator of the zeta function of the semistable curve $\Xp_{e,0}$. 
(The author is implicitly assuming here the existence of a trace formula for
semistable curves, cf. (\ref{Eqn-TraceFormula}).)
One can test this for smooth
plane curves by combining our algorithm for computing limiting Frobenius structures of hypersurfaces with
explicit ad hoc computations of ``stable limits'' of plane curves.

For $n > 1$ our ``Clemens-Schmidt'' sequence would involve in a non-trivial manner the unknown
``rigid homology'' groups $H_{j,rig}(\Xp_{e,0})$. It seems reasonable to conjecture though that
the degenerate fibre $\Xp_{e,0}$ arises by reduction modulo $p$ from a $\W$-scheme $\XW_{e,0}$
and dropping the Frobenius structure we have an isomorphism of $\K$-vector spaces
\[ H_{j,rig}(\Xp_{e,0}) \cong H_j ((\XW_{e,0})_{\K}). \]
where $(\XW_{e,0})_{\K}$ is the generic fibre and $H_j(\bullet)$ singular homology with coefficients in $\K$.
Thus under this further conjecture the dimensions at least of our unknown spaces are amenable to computation.

\section{Examples}\label{Sec-Exs}

In this section we present some explicit examples of degenerations of curves, surfaces and
three-folds with the aim of illustrating our construction and providing evidence in support of
Conjecture \ref{Conj-CS}. We follow here the paradigm from experimental science rather than mathematics.
That is, we test experimentally consequences of our conjecture and if there is no inconsistency declare
we have found evidence in support of it.

\subsection{Preparation}

We first discuss our choice of examples, and explain exactly what we computed in each case.

\subsubsection{The families}

In our examples the family $\Xp \rightarrow \Sp$ is defined by an equation
\begin{equation}\label{Eqn-Family}
 P_t := (1-t)P_0 + t P_1 = 0
\end{equation}
for specific choices of homogeneous polynomials $P_0,\,P_1 
\in \fp[x_0,\cdots,x_{n+1}]$. We write the variables
$x_0,x_1,\cdots,x_{n+1}$ as $X,Y,Z$ and so on. The good algebraic lift
$\XW_{\sZ} \rightarrow \SW_{\sZ}$ (Definition \ref{Def-GALhyper})
of our family is defined by the same equation
only with the polynomials $P_0$ and $P_1$ thought of as having integer coefficients.
(See the first paragraph of Section \ref{Sec-CLFSHyper} for the exact construction of 
$\XW_{\sZ} \rightarrow \SW_{\sZ}$ from the polynomial $P_t$; here we take $\Delta(t)$ to be
$\Delta_{n,d}(P_t)$.)

Our algorithm outputs a suitable approximation to the limiting Frobenius structure $(\HH_0,\N_0,\Fr_0,e)$ at $t = 0$
for the primitive middle dimensional cohomology of $\Xp \rightarrow \Sp$, cf. Theorem \ref{Thm-LFShypersurfaces}. By suitable approximation
we mean here a $p^N$-approximation for $N$ sufficiently large in each example for the author to
identify a plausible integer polynomial $\det(1 - T \Fr_0|\HH_0)$. We do not claim this polynomial is provably correct; however, our choice of $N$ is based upon expectations on the size of the coefficients from
the weight-monodromy conjecture (Section \ref{Sec-WMC}) and consideration of the $\Z_p$-lattice structure and
Hodge filtration on the limiting Frobenius structure (Section \ref{Sec-LFHS} and \cite[Section 9.3.2]{L1}).
 
\subsubsection{The weight-monodromy conjecture}\label{Sec-WMC}

The nilpotent monodromy operator $\N_0$ defines
an increasing filtration 
\[ \{0\} =: (\HH_0)_{-1} \subseteq (\HH_0)_0 \subseteq (\HH_0)_1 \subseteq \cdots \subseteq (\HH_0)_{2n} := \HH_0\]
on the $\Qp$-vector space $\HH_0$ \cite[Pages 106-107]{DSM}. 
The filtration is defined as a ``convolution'' of the two filtrations
$(\Ker \N_0^{k})_{k = 0}^{n+1}$ and $(\Ima \N_0^i)_{i = 0}^{n+1}$ \cite[(1.5.5)]{Illusie}. 
Since $\N_0 \Fr_0 = p \Fr_0 \N_0$ the filtration is therefore stabilised by the Frobenius $\Fr_0$.
In the case of curves ($n = 1$) we have that $\N_0^2 = 0$ and the filtration is
\begin{equation}\label{Eqn-CurveFiltration}
 \{0\} \subseteq \Ima(\N_0) \subseteq \Ker(\N_0) \subseteq \HH_0.
\end{equation}
For surfaces ($n = 2$) we have $\N_0^3 = 0$ and here the filtration is
\begin{equation}\label{Eqn-SurfaceFiltration}
 \{0\} \subseteq \Ima(\N_0^2) \subseteq \Ima(\N_0) \cap \Ker(\N_0) \subseteq
\Ima(\N_0) + \Ker(\N_0) \subseteq \Ker(\N_0^2) \subseteq \HH_0. 
\end{equation}
Note that for surfaces when $\N_0^2 = 0$ then (\ref{Eqn-SurfaceFiltration}) becomes
(\ref{Eqn-CurveFiltration}), only with indices shifted by one. In any dimension $n$ we have
$\N_0^{n+1} = 0$ (Theorem \ref{Thm-LMT}), but when in fact $\N_0 = 0$ then the filtration becomes
\begin{equation}\label{Eqn-NzeroFiltration}
(\HH_0)_{n-1} := \{0\} \subseteq \HH_0 =: (\HH_0)_{n}.
\end{equation}
These cases cover the ones we shall encounter in our examples.

For $j$ such that $2j \in \Z$ 
and $Q(T) \in 1 + T \Q[T]$ we shall call $Q(T)$ a weight $j$ Weil polynomial w.r.t. $p$ if all of its reciprocal roots have complex absolute value $p^j$. We shall say that the weight-monodromy conjecture holds for our construction if for
each $0 \leq j \leq 2n$, we have $\det(1 - T \Fr_0| (\HH_0)_{j}/(\HH_0)_{j-1})$ a weight $j/2$ Weil polynomial w.r.t.
$p$.

For each of our examples we state the dimensions of the subspaces in the monodromy filtration, and 
we verified that the weight-monodromy conjecture holds for our construction. 
This gives evidence for the
first part of Conjecture \ref{Conj-CS}, in the following slightly loose sense. 
If the morphism $\Xp_e \rightarrow \Sp_e$ does extend
to a semistable degeneration over the origin, then the weight-monodromy conjecture holds in the setting
of $\ell$-adic \'{e}tale cohomology \cite[Pages 41-42]{Illusie}. So one at least expects that it should hold
for our construction, although the author does not offer a proof of this.

\subsubsection{Stable limits of curves}

The subspace $(\HH_0)_{2n-1}$
in the filtration for curves ($n = 1$) is $\Ker \N_0$. We write down $Q(T) := \det(1 - T \Fr_0 | \Ker \N_0)$ 
for our examples of curve degenerations. 
According to Conjecture \ref{Conj-CS} after pulling back $s^e := t$ our degeneration should extend to a semistable
degeneration over the origin with degenerate fibre the curve $\Xp_{e,0}$ and $Q(T)$ should be the numerator of the
zeta function of  $\Xp_{e,0}$. We provide evidence in support of this in Section \ref{Sec-ConjCSevidence}, 
using the recipes in \cite{HM} to
construct at least geometrically the ``stable limits'' of our families at the origin.
Writing $\Xp_0$
for the fibre at $t = 0$ in our original family, defined by the polynomial $P_0$, it is amusing to
note that $Q(T)$ can have nothing whatsoever to do with the zeta function of the 
singular fibre $\Xp_0$ itself.

\begin{note}
In \cite{RK} Kloosterman gives an interesting discussion of
the application of both the deformation method and Kedlaya's algorithm to singular projective hypersurfaces.
Several low degree examples in which the cohomology is pure ``algebraic'' are presented. It is not clear
to the present author that the methods based on deformation in that paper can be generally applied; for example,
the lift of Frobenius chosen may cause difficulties, cf. Note \ref{Note-Lift}. However, the idea that some appropriate
version of the deformation method should compute the stable limit of a degeneration of curves is due to Kloosterman.
\end{note}

\subsubsection{Comments on implementation}

Our choice of examples was limited in several ways. First, it was convenient to assume that
the degree $d$ divides $p-1$ and take $P_1 := x_0^d + \cdots + x_{n+1}^d$, so that we could
easily write down the Frobenius matrix at $t = 1$ cf. \cite[Section 1.5]{KKcubic}. Second, we restricted
attention mainly to examples in which the Picard-Fuchs matrix w.r.t. the Dwork basis had only a simple
pole at $t = 0$ (and converged in the punctured $p$-adic open unit disk). This avoided having to
use a cyclic vector, which we found time consuming. See Example \ref{Ex-Quintic} for an exception.
Third, for the quartic surface examples we chose polynomials $P_0$ with few non-zero terms.
This was a needed for the Gr\"{o}bner basis computation to finish within a reasonable time (see
Note \ref{Note-Pancratz} for a method avoiding this). 
With these restrictions we were still able to examine many different types of degenerations (some of
which were suggested to the author by Damiano Testa).

The experiments were carried out using a machine with four quad-core processors ($2.60$ GHz) and $64$ GB
RAM. 
Approximate timings were: Example \ref{Ex-DoubleConic} ($1$ minute), Example \ref{Ex-3cusps} ($3$ minutes),
Example \ref{Ex-QuinticTP} ($19$ hours), Example \ref{Ex-Sextic} ($4$ hours), Example \ref{Ex-SexticDamiano} ($44$ hours), Example \ref{Ex-Quintic} ($48$ minutes), Example \ref{Ex-QuarticA1} ($41$ minutes), Example \ref{Ex-QuarticA3}
($1$ minute), Example \ref{Ex-QuarticA2} ($95$ minutes), Example \ref{Ex-Roman} ($2$ hours), Example 
\ref{Ex-QuarticCone} ($4$ hours), Example \ref{Ex-Quartic1D} ($11$ hours)
and Example \ref{Ex-3fold} ($6$ hours). Space consumption varied accordingly up to around $6$ GB RAM.

\subsection{The experiments}

We now present the results of our computational experiments.

\subsubsection{Examples directly supporting Conjecture \ref{Conj-CS}}\label{Sec-ConjCSevidence}

In the following examples we were able to compute geometrically the stable limit of the degeneration
and provide strong evidence supporting Conjecture \ref{Conj-CS}.

\begin{example}\label{Ex-DoubleConic} 
Consider the degeneration (\ref{Eqn-Family}) of a diagonal quartic curve to a double conic defined
by taking $P_1  :=  X^4 + Y^4 + Z^4$ and
\[
\begin{array}{rcl}
P_0 & := & (X^2 + Y^2 + 3Z^2 + 3XY + YZ + 2XZ)^2.
\end{array}
\]
Here $e = 2$ and by pulling back the base via $s^2 := t$ we get monodromy
operator $\N_0 = 0$. The monodromy filtration (\ref{Eqn-CurveFiltration}) has spaces of dimensions $0,0,6,6$, respectively.
For the prime $p := 5$ the reverse characteristic polynomial of the Frobenius
$\Fr_0$ on $\Ker(\N_0) = \HH_0$ is
\[
Q(T)  =  1 - 6T + 23T^2 -58T^3 + 115T^4 - 150T^5 + 125T^6.
\]
This is a weight $\frac{1}{2}$ Weil polynomial w.r.t. $5$. The zeta function of the fibre $\Xp_0$ is
$1/(1-T)(1-5T)$ which is not related to the polynomial $Q(T)$. However, the ``stable limit''
$\Xp_{2,0}$ of the
family around zero is a hyperelliptic curve of genus $2$ which is constructed as a double-cover of the conic 
\[  C := X^2 + Y^2 + 3Z^2 + 3XY + YZ + 2XZ = 0 \]
ramified at the eight points of intersection of $C = 0$ with $P_1 = 0$ \cite[Pages 133-134]{HM}.
By direct computation of these points and projection to an axis 
one finds that over $\f_5$ the curve $\Xp_{2,0}$ has equation
\[ y^2 = a (x^8 + 3x^7 + 2x^6 + 4x^3 + x^2 + 2x + 1) \]
for some non-zero $a \in \f_5$. Taking $a$ to be a non-square in $\f_5$ gives a curve
whose zeta function is indeed $Q(T)/(1-T)(1-5T)$. (For $a$ a square the zeta function is
$Q(-T)/(1-T)(1-5T)$.) So this computation is consistent with Conjecture \ref{Conj-CS}.
\end{example}

Observe that if we changed the polynomial $P_1$ in Example \ref{Ex-DoubleConic} then the stable limit
of the degeneration would (usually) be different, since it depends upon the points of intersection of $P_1$ with
the conic. Thus the stable limit can depend upon the family, not just the degenerate fibre.

\begin{example}\label{Ex-3cusps}
Consider the degeneration (\ref{Eqn-Family}) of a diagonal quartic curve to a $3$-cuspidal quartic 
(cusps $(0:0:1),(0:1:0),(1:0:0)$) defined by
taking $P_1  :=  X^4 + Y^4 + Z^4$ and
\[
\begin{array}{rcl}
P_0 & := & X^2Y^2 + Y^2Z^2 + Z^2X^2 - 2XYZ(X + Y + Z).
\end{array}
\]
Here $e = 6$ and pulling back the base via $s^6 := t$ the local monodromy becomes trivial; that is,
$\N_0 = 0$. So as before the monodromy filtration  (\ref{Eqn-CurveFiltration}) consists of spaces of dimensions $0,0,6,6$, respectively.
For the prime $p := 13$ the reverse characteristic polynomial of the Frobenius
$\Fr_0$ on $\Ker(\N_0) = \HH_0$ is
\[ Q(T)  =  (1 - 5T + 13T^2)^3. \]
This is a weight $\frac{1}{2}$ Weil polynomial w.r.t. $13$. The fibre $\Xp_0$ has geometric 
genus zero and zeta function $1/(1-T)(1-13T)$. The ``stable limit'' 
around zero for the family over $\C$ is a projective line $\pr^1_{\sC}$ and three non-intersecting copies
of an elliptic curve $\E_{\sC}$ with $j$-invariant zero each meeting the projective line transversely at one point;
cf. \cite[Pages 122-128]{HM}, where the elaborate calculation of the stable limit of a curve acquiring a cusp
is performed. The polynomial $1 - 5T + 13T^2$ is the numerator of the zeta function of an elliptic
curve $E := \E \times_{\sZ} \f_{13}$ where $\E_{\sC} := \E \times_{\sZ} \C$ has $j$-invariant zero, for
$\E$ some elliptic curve ``over $\Z$'' (indeed several). Thus this is consistent with
Conjecture \ref{Conj-CS} taking $\Xp_{6,0}$ to be $\pr_{\sff_{13}}^1 \cup E \cup E \cup E$ with appropriate
intersections.
\end{example}

\begin{example}\label{Ex-QuinticTP}
Consider the degeneration (\ref{Eqn-Family}) of a diagonal quintic to a quintic with a triple point $(0:0:1)$ defined
by taking $P_1  :=  X^5 + Y^5 + Z^5$ and
\[
\begin{array}{rcl}
P_0 & := & X^5 + Y^5 + (X^4 + 3X^3Y  + Y^4)Z + (2X^3 + XY^2 + 3Y^3)Z^2.
\end{array}
\]
Here $e = 3$ and pulling back the base via $s^3 := t$ gives non-trivial nilpotent monodromy $\N_0$. 
We have $\N_0^2 = 0$ and the monodromy filtration  (\ref{Eqn-CurveFiltration}) consists of spaces of dimensions $0,2,10,12$, respectively.
For $p :=11$ the reverse characteristic polynomial of the Frobenius acting $\Fr_0$
acting on $\Ker(\N_0)$ is
\[ Q(T) = (1-T)(1+T)(1 + 11 T^2)(1 + 5T + 22T^2 + 62T^3 + 242T^4 + 605T^5 + 1331T^6).\]
The latter degree 8 factor is a weight $\frac{1}{2}$ Weil polynomial w.r.t. $11$.
The ``stable limit'' around zero for the family over $\C$ is the normalisation of the
degenerate fibre meeting an elliptic curve with $j$-invariant zero at the three points
lying over the triple point of the degenerate fibre \cite[Pages 131-132]{HM}. The degree six
factor in $Q(T)$ is the numerator of the zeta function of the normalisation, and 
$1 + 11T^2$ the numerator of the zeta function of
$E := \E \times_{\sZ} \f_{11}$ where $\E_{\sC} := \E \times_{\sZ} \C$ has $j$-invariant zero, for
$\E$ some elliptic curve ``over $\Z$''. So this example supports
Conjecture \ref{Conj-CS}.
\end{example}

\begin{example}\label{Ex-Sextic}
Consider the degeneration (\ref{Eqn-Family}) of a diagonal sextic to a sextic curve with an ordinary double point
$(0:0:1)$ defined by taking $P_1 :=  X^6 + Y^6 + Z^6$ and
\[
\begin{array}{rcl}
P_0 & := & X^6 + Y^6 + 3Y^4Z^2 + 2X^3Z^3 + (X^2 + Y^2)Z^4.
\end{array}
\]
Here $e = 1$, and no pullback on the base was required, but the monodromy operator
$\N_0$ is non-zero.  The monodromy filtration  (\ref{Eqn-CurveFiltration}) consists of
spaces of dimensions $0,1, 19,20$, respectively.
For the prime $p :=7$ the 
reverse characteristic polynomial of the Frobenius
$\Fr_0$ acting on $\Ker(\N_0)$ is
\[
Q(T)  =  (1+T)(1 - T + 7T^2)(1 + T - 9T^3 + 49T^5 + 343T^6)\]
\[ \times (1 - 4T +19T^2 - 60T^3 + 179T^4 - 522T^5 + 1253T^6 - 2940T^7 + 6517T^8 \]
\[ - 9604T^9 + 16807T^{10}).
\]
The fibre $\Xp_0$ is semistable, and since the total space is smooth we have
a semistable degeneration.  Indeed $\Xp_{1,0} := \Xp_0$ has zeta function
$Q(T)/(1-T)(1-7T)$ as predicted by Conjecture \ref{Conj-CS}.
\end{example}

\begin{note}\label{Note-BadPrimes}
One can illustrate in a simple manner how our method can be used to compute the local factors
at ``bad primes'' in the L-series of a curve using Example \ref{Ex-Sextic}. Consider the 
plane curve over $\Z$ defined by the polynomial $R_0 := P_0 + 7 Z^6$ with $P_0$ the
polynomial from Example \ref{Ex-Sextic} but 
thought of as having integer coefficients. Then this curve is smooth over
$\Q$ but the reduction modulo the prime $p := 7$ is the singular sextic $\Xp_0$. The local
factor at $p := 7$ in the L-series of the curve defined by $R_0$ is $Q(T)$. Note here that one
computes this local factor by lifting $R_0  \bmod{7}$ to the integer polynomial
$P_0$, not the integer polynomial $R_0$ itself. The reason for this is that the family over
$\Q$ defined by the equation $(1-t)R_0 + t P_1$ is not smooth in the punctured $p$-adic open unit disk
around $t = 0$: since $R_0$ defines a smooth curve over $\Q$ but $R_0 \bmod{7}$ defines
a singular curve over $\f_7$ this forces there to be a singular fibre in the family for some
$t = t_0$ with $\ord_7(t_0) > 0$ (in fact, $\ord_7(t_0) = 1$ here).

Although this example looks artificial, it is somewhat typical. Let $R_0 \in \Z[X,Y,Z]$ be homogeneous
of degree $d$ defining a smooth curve over $\Q$, but a singular curve modulo some
prime $p$. One expects this singular curve to have a single rational node, which after a
linear change of variable is $(0:0:1)$. So after this change of variable $R_0 = P_0 + p(aX + bY)Z^{d-1} + pcZ^d$
for some $a,b,c \in \Z$ and $P_0(X,Y,Z)$ homogeneous of degree $d$ with no term in $Z^{d-1}$ or $Z^d$.
One expects the curve defined by the polynomial $P_0$ to have again just the single node $(0:0:1)$.
One then computes the local factor at $p$ using the lift $P_0$ of $R_0 \bmod{p}$ as in Example
\ref{Ex-Sextic}.
\end{note}

\begin{example}\label{Ex-SexticDamiano}
Consider the smooth curve of genus $4$ in $\pr^3$ defined as the intersection
\[
\begin{array}{rcl}
T^2 + X^2 + Y^2 + Z^2 & = & 0\\
T^3 + 2TXZ + X^3 - X^2Y + Y^3 + Z^3 & = & 0.
\end{array}
\]
Eliminating $T$ we obtain a birational curve in $\pr^2$ of degree $6$ with $6$ nodes, defined
by the polynomial
\[ P_0 := X^6 - X^5Y - 2X^5Z + 2X^4Y^2 + \tfrac{7}{2}X^4Z^2 + X^3Y^3 - 4X^3Y^2Z - 
    3X^3Z^3 \]
    \[ + \tfrac{1}{2}X^2Y^4 + 5X^2Y^2Z^2 - X^2YZ^3 + \tfrac{7}{2}X^2Z^4 - 
    2XY^4Z - 4XY^2Z^3 - 2XZ^5 \]
    \[ + Y^6 + \tfrac{3}{2}Y^4Z^2 + Y^3Z^3 + 
    \tfrac{3}{2}Y^2Z^4 + Z^6.\]
Consider now the degeneration (\ref{Eqn-Family}) to this six-nodal sextic defined by taking $P_1 := X^6 + Y^6 + Z^6$. Here
$e = 1$ and no pull-back on the base gives non-trivial nilpotent monodromy $\N_0$.
We have $\N_0^2 = 0$ and the monodromy filtration consists  (\ref{Eqn-CurveFiltration}) consists
of spaces of dimensions $0,6, 14,20$, respectively.
For $p := 7$  the reverse characteristic polynomial of the Frobenius $\Fr_0$
acting on $\Ker(\N_0)$ is
\[ Q(T) := (1 - T^2 + T^4 - T^6) \]
 \[ \times (1 + 2T + 11T^2 + 40T^3 + 72T^4 + 280T^5 + 539T^6 + 686T^7 + 2401T^8).\]
The fibre $\Xp_0$ is semistable with smooth total space, so we have a semistable degeneration. Indeed $\Xp_{1,0} := \Xp_0$ has zeta function 
$Q(T)/(1-T)(1-7T)$, which supports Conjecture \ref{Conj-CS}. The zeta function of the original genus $4$ curve in $\pr^3_{\sff_7}$ has numerator
the degree $8$ weight $\frac{1}{2}$ w.r.t. $7$ factor of $Q(T)$.
\end{example}

\subsubsection{Examples illustrating our construction}

The next examples are primarily illustrative, although in each case we verified the weight-monodromy
conjecture held for our construction (Section \ref{Sec-WMC}). It would be interesting to compute
a ``semistable limit'' for the degeneration in each case. Note that for the surfaces considered, cyclotomic factors in 
$Q(T/p)$ should correspond to algebraic curves on the ``semistable limit'', by the philosophy of the
Tate conjecture if not the conjecture itself.

\begin{example}\label{Ex-Quintic}
Consider the degeneration (\ref{Eqn-Family}) of a diagonal quintic to a non-reduced union of lines defined by
taking $P_1 :=  X^5 + Y^5 + Z^5$ and
\[
\begin{array}{rcl}
P_0 & := & XYZ^3.
\end{array}
\]
Here $e = 3$ and pulling back the base via $s^3 := t$ gives non-trivial nilpotent monodromy $\N_0$. 
We have $\N_0^2 = 0$ and the monodromy filtration  (\ref{Eqn-CurveFiltration}) consists of
spaces of dimensions $0,1,11,12$, respectively.
Note that in this example the connection matrix had a double-pole at $t = 0$ w.r.t. the initial Dwork
basis (see Corollary \ref{Cor-DworkBasis}) 
and so a cyclic vector was used to obtain a new basis so that the connection matrix
had only a simple pole (cf. Theorem \ref{Thm-ExplicitReg}).
For the prime $p := 31$ the reverse characteristic polynomial of the Frobenius
$\Fr_0$ acting on $\Ker(\N_0)$ is
\[ Q(T)  =  (1-T)(1 + 4T + 31T^2)(1 + 8T + 33T^2 + 248T^3 + 961T^4)^2. \]
The zeta function of the original singular fibre $\Xp_0$ is $1/(1-31 T)^3$. 
\end{example}

\begin{example}\label{Ex-QuarticA1}
Consider the degeneration (\ref{Eqn-Family}) of a diagonal quartic surface to a quartic surface with a pair of ordinary double points
($(\pm i: 1 : 0 : 0)$ where $i^2 = -1$) defined by taking $P_1  :=  X^4 + Y^4 + Z^4 + W^4$ and
\[
\begin{array}{rcl}
P_0 & := & X^4 + Y^4 + Z^4 + W^4 + 2X^2Y^2 + 2XYZW + 2ZW^3.
\end{array}
\]
Here $e = 2$ and pulling back the base via $s^2 := t$ gives trivial monodromy; that is, $\N_0 = 0$. 
The monodromy filtration (\ref{Eqn-SurfaceFiltration}) has subspaces of dimension $0,0,0,21,21,21$.
For the
prime $p := 5$ the reverse characteristic polynomial $Q(T)$ of the Frobenius $\Fr_0$
on $\HH_0$ satisfies
\[
Q(T/5)  =  (1-T)^7(1+T)^6(1+T^2)^2(1+ T + T^2)(1 + \tfrac{2}{5} T + T^2).
\]
The polynomial $Q(T)$ is a weight $1$ Weil polynomial w.r.t. $5$.  By naive point counting the
author verified that $\bmod\,T^8$ the local expansion at $T = 0$ of the zeta function of the singular fibre $\Xp_0$ is congruent to
$(1 + 5 T)^2$ times the local expansion of $1/(1-T)(1-5T)Q(T)(1-25T)$. This suggests that a semistable
degeneration can be constructed in which the degenerate fibre $\Xp_{2,0}$ is $\Xp_0$ blown up at
the two singular points.
\end{example}

\begin{example}\label{Ex-QuarticA3}
Consider the degeneration (\ref{Eqn-Family}) of a diagonal quartic to a quartic surface with a pair of $A_3$ singularities
($(0:0:0:1)$ and $(-1:0:0:1)$) defined by taking $P_1  :=  X^4 + Y^4 + Z^4 + W^4$ and
\[
\begin{array}{rcl}
P_0 & := & X^4 + Y^4 + 2Z^4 + W^2(X^2 + Y^2) + 2WX^3.
\end{array}
\]
Here $e = 4$ and pulling back the base via $s^4 := t$ gives trivial monodromy; that is, $\N_0 = 0$.
The monodromy filtration  (\ref{Eqn-SurfaceFiltration}) has subspaces of dimension $0,0,0,21,21,21$.
For the prime $p := 5$ the reverse characteristic polynomial $Q(T)$ of the Frobenius $\Fr_0$
on $\HH_0$
satisfies
\[
Q(T/5) =  (1-T)^5(1+T)^8(1 - \tfrac{8}{5}T + T^2)(1 + T^2)^3.
\]
The polynomial $Q(T)$ is a weight $1$ Weil polynomial w.r.t. $5$.
\end{example}

\begin{example}\label{Ex-QuarticA2}
Consider the degeneration (\ref{Eqn-Family}) of a diagonal quartic to a quartic surface with an $A_2$ singularity 
$(0:0:0:1)$ defined by $P_1  :=  X^4 + Y^4 + Z^4 + W^4$ and
\[
\begin{array}{rcl}
P_0 & := & X^4 + 2Y^4 + 2Z^4 + W^2(X^2 + Y^2) + 2WZ^3.
\end{array}
\]
Here $e = 3$ and pulling back the base via $s^3 := t$ gives trivial monodromy.
The monodromy filtration  (\ref{Eqn-SurfaceFiltration}) has subspaces of dimension $0,0,0,21,21,21$.
For the prime $p := 13$ the reverse characteristic polynomial $Q(T)$ of the Frobenius $\Fr_0$
on $\HH_0$ satisfies
\[ Q(T/13)  =  (1-T)^2(1+T)^3(1- T + T^2)(1+T+T^2)^2(1-T^2 + T^4)\]
\[ \times (1 - \tfrac{2}{13}T + \tfrac{16}{13}T^2 - \tfrac{6}{13}T^3 + \tfrac{16}{13}T^4
- \tfrac{2}{13}T^5 + T^6).\]
The polynomial $Q(T)$ is a weight $1$ Weil polynomial w.r.t. $13$.
\end{example}

\begin{example}\label{Ex-Roman}
Consider the degeneration (\ref{Eqn-Family}) of a diagonal quartic to the ``Roman surface'' defined
by taking $P_1  :=  X^4 + Y^4 + Z^4 + W^4$ and 
\[
\begin{array}{rcl}
P_0 & := & X^2Y^2 + X^2Z^2 + Y^2Z^2 + 2XYZW.
\end{array}
\]
Here $e = 2$ and pulling back the base via $s^2 := t$ gives non-trivial nilpotent monodromy; the monodromy
matrix $\N_0$ is such that $\N_0^2 \ne 0$ but $\N_0^3 = 0$.
The monodromy filtration (\ref{Eqn-SurfaceFiltration}) has subspaces 
of dimension $0,1,1,20,20,21$.
For the prime $p := 13$ the reverse characteristic polynomial $Q(T)$ of the Frobenius $\Fr_0$
on $\HH_0$ is
\[ Q(T) = (1 - 169 T)(1 - 13T)^7(1 + 13T)^{12} (1-T).\]
Note that $\Ker(\N_0)$ is not a subspace in the monodromy filtration, and
$\det(1 - T \Fr_0 | \Ker(\N_0)) = (1-13T)^6(1+13T)^{12}(1-T)$.
\end{example}

\begin{example}\label{Ex-QuarticCone}
Consider the degeneration (\ref{Eqn-Family}) of a diagonal quartic to a cone over a smooth quartic curve
defined by taking $P_1  :=  X^4 + Y^4 + Z^4 + W^4$ and
\[
\begin{array}{rcl}
P_0 & := & X^4 + 3X^3Y + XYZ^2 + Y^4 + 2Y^3Z + Z^4.
\end{array}
\]
Here $e = 4$ and pulling back the base via $s^4 := t$ gives trivial monodromy. 
The monodromy filtration  (\ref{Eqn-SurfaceFiltration}) has subspaces of dimension $0,0,0,21,21,21$.
For the prime
$p := 13$ the reverse characteristic polynomial $Q(T)$ of the Frobenius $\Fr_0$ on $\HH_0$
is such that
\[
Q(T/13) = (1-T)^3(1+T)^2(1+T^2)(1 + \tfrac{16}{13}T + \tfrac{21}{13}T^2 + \tfrac{16}{13}T^3 + 
\tfrac{15}{13}T^4
+ \tfrac{16}{13}T^5 + \tfrac{19}{13}T^6\]
\[ + \tfrac{24}{13}T^7 + \tfrac{19}{13}T^8 + \tfrac{16}{13}T^9
+ \tfrac{15}{13}T^{10} + \tfrac{16}{13}T^{11} + \tfrac{21}{13}T^{12} + \tfrac{16}{13}T^{13} + T^{14}).
\] 
The polynomial $Q(T)$ is a weight $1$ Weil polynomial w.r.t. $13$. The zeta function of the fibre
$\Xp_0$ is $R(13T)/(1-T)(1-13T)(1-169T)$ where $R(T) := 1 + 2T + 11T^2 - 24T^3 + 143T^4 + 338T^5 + 13^3T^6$ is 
the numerator of the zeta function of the smooth quartic curve. 
\end{example}

\begin{example}\label{Ex-Quartic1D}
Consider the degeneration (\ref{Eqn-Family}) of a diagonal quartic to an irreducible quartic surface with a one-dimensional
singular locus defined by taking $P_1  :=  X^4 + Y^4 + Z^4 + W^4$ and
\[
\begin{array}{rcl}
P_0 & := & X^2Y^2 + Y^2W^2 + X^2Z^2 + Y^2W^2 + X^2W^2 + Y^2Z^2 + 2XYZW.
\end{array}
\]
Here $e = 2$ and pulling back the base via $s^2 := t$ gives non-trivial monodromy matrix
$\N_0$ with $\N_0^2 = 0$. The monodromy filtration (\ref{Eqn-SurfaceFiltration}) has subspaces of dimension
$0,0,2,19,21,21$. For the prime $p := 13$ the reverse characteristic polynomial $Q(T)$ of the Frobenius $\Fr_0$ on $\HH_0$ is
\[ Q(T) = (1 - 13T)^7 (1 + 13T)^{10}(1 + 2T + 13T^2)(1 + 26T + 2197T^2).\]
The last two factors here are weight $\frac{1}{2}$ and weight $\frac{3}{2}$ Weil polynomials w.r.t. $13$, respectively.
\end{example}

\begin{example}\label{Ex-3fold}
Consider the degeneration (\ref{Eqn-Family}) of a diagonal cubic $3$-fold to a singular $3$-fold containing a pencil of
planes defined by taking $P_1  :=  X^3 + Y^3 + Z^3 + U^3 + V^3$ and
\[
\begin{array}{rcl}
P_0 & := & ZX^2 + UXY + VY^2.
\end{array}
\]
Here $e = 2$ and pulling back the base via $s^2 := t$ gives trivial monodromy. So $\N_0 = 0$ and
the monodromy filtration (\ref{Eqn-NzeroFiltration}) has subspaces of dimension $0,0,0,0,10,10,10,10$.
For the prime $p := 19$ the reverse characteristic polynomial $Q(T)$ of the Frobenius $\Fr_0$
on $\HH_0$ is such that
\[
Q(T/19)  =  (1-2T + 19T^2)^2(1 + 8T + 19T^2)^2(1-T + 19T^2).
\]
Thus $Q(T)$ is a weight $\frac{3}{2}$ Weil polynomial w.r.t. $19$.
\end{example}


\begin{thebibliography}{99}

\bibitem{AKR} T.G. Abbot, K.S. Kedlaya, D. Roe, Bounding Picard numbers of surfaces using
$p$-adic cohomology, to appear in ``Arithmetic, Geometry and Coding Theory (AGCT 2005)'',
S\'{e}minaires et Congr\`{e}s 21, Societ\'{e} Math\'{e}matique de France, 125-159, 2009.

\bibitem{Bruno} B. Chiarellotto, Rigid cohomology and invariant cycles for a semistable log scheme,
Duke Math. J. 97 (1), (1999), 155-169.

\bibitem{CK} D.A. Cox and S. Katz, Mirror Symmetry and Algebraic Geometry, Math. Surveys and Monographs, 68, Amer. Math. Soc., 1999. 

\bibitem{Dwork} B. Dwork, On the rationality of the zeta function of an algebraic variety, Amer. J. Math.
82 (1960), 631-648.

\bibitem{DGS} B. Dwork, G. Gerotto, F.J. Sullivan, An Introduction to G-Functions, Annals of Mathematical
Studies No. 133, Princeton University Press, 1994.

\bibitem{Dimca} A. Dimca, On the de Rham cohomology of a hypersurface complement, Amer. J. Math.
113 (4), (1991), 763-771.

\bibitem{Gerk} R. Gerkman, Relative rigid cohomology and deformation of hypersurfaces,
Int. Math. Res. Pap. IMRP 2007 (1), Art. ID rpm003, 67 pp.

\bibitem{HM} J. Harris and I. Morrison, Moduli of Curves, Graduate Texts in Mathematics 187, Springer, 1998.

\bibitem{Illusie} L. Illusie, Autour de the\'{e}oreme de monodromie locale, P\'{e}riodes $p$-adique 
(Bures-sur-Yvette, 1988), Ast\'{e}rique 223, (1994), 9-57.

\bibitem{NMK} N.M. Katz, Nilpotent connections and the monodromy theorem: applications
of a result of Turrittin, Inst. Hautes \"{E}tudes Sci. Publ. Math. 39 (1970), 175-232.

\bibitem{KatzOda} N.M. Katz, T. Oda, On the differentiation of de Rham cohomology classes with respect to parameters, J. Math. Kyoto Univ. 8, (1968), 199-213.

\bibitem{KKss} K.S. Kedlaya, Semistable reduction for overconvergent $F$-isocrystals on a curve,
Mathematical Research Letters 10, (2003), 151-159.

\bibitem{KKcubic} K.S. Kedlaya, Effective $p$-adic cohomology for cyclic cubic
threefolds. Available at www.mit.edu/$\sim$kedlaya/papers/

\bibitem{KKbook} K.S. Kedlaya, $p$-adic differential equations. Available at www.mit.edu/$\sim$kedlaya/papers/

\bibitem{JK} J. Koll\'{a}r, Lectures on Resolution of Singularities., Annals of Mathematical Studies
No. 166, Princeton University Press, 2007.

\bibitem{RK} R. Kloosterman, Point counting on singular hypersurfaces, in ``Algorithmic number theory: ANTS VIII'',
A.J. van der Poorten and A. Stein (eds), LNCS 5011, 327-341, Springer-Verlag, 2008. 

\bibitem{AL} A. Landman, On the Picard-Lefschetz transformation for algebraic manifolds acquiring general
singularities, Trans. Amer. Math. Soc. 181, (1973), 89-126.

\bibitem{SL} S. Lang, Algebra (Revised Third Edition), Graduate Texts in Mathematics 211, Springer, 2002.

\bibitem{Lfocm} A.G.B. Lauder, Counting solutions to equations in many variables over finite fields.,
Found. Comput. Math. 4 (3), (2004), 221-267.

\bibitem{L1} A.G.B. Lauder, A recursive method for computing zeta functions of varieties, LMS J.
Comput. Math. 9, (2006), 222-269 (electronic).

\bibitem{LeStum} B. Le Stum, Rigid Cohomology, Cambridge Tracts in Mathematics 172,
Cambridge University Press, 2007.

\bibitem{Mac} F.S. Macaulay, The Algebraic Theory of Modular Systems, Cambridge University
Press, Cambridge, 1916.


\bibitem{BM} B. Mazur, Frobenius and the Hodge filtration, Bull. Amer. Math. Soc. 78, (1972), 653-667.

\bibitem{Mieda} Y. Mieda, The Picard-Lefschetz formula for $p$-adic cohomology, Math. Z. 257, (2007), 403-425.

\bibitem{DSM} D.S. Morrison, The Clemens-Schmidt exact sequence and applications, in ``Topics in
Transcendental Algebraic Geometry'' P. Griffiths (ed), Annals of Mathematical Studies 106, 101-119, Princeton
University Press, 1984.

\bibitem{DM} D. Mumford, Semi-stable reduction, in ``Toroidal Embeddings I'', G. Kempf,
F. Knudsen, D. Mumford, B. Saint-Donat (eds), Lecture Notes in Mathematics 339, 53-108, Springer-Verlag,
1973.

\bibitem{SP} S. Pancratz, On the computation of the Gauss-Manin connection for families of projective hypersurfaces,
submitted as ``D.Phil. transfer of status thesis'', University of Oxford, 2009. Available at www.pancratz.org.

\bibitem{PS} C.A.M Peters and J.H.M Steenbrink, Mixed Hodge Structures, Springer, 2008.

\bibitem{vPS} M. van de Put and M. Singer, Galois theory of linear differential equations, Springer, 2003.

\end{thebibliography}
\end{document}